\font\titlefont=cmr12 at 14pt
\DeclareMathOperator{\Val}{\matV}
\newtheoremstyle{break}
   {\topsep}{\topsep}%
   {\itshape}{}%
   {\bfseries}{.}%
   { }
   {\thmname{#1}\thmnumber{\@ifnotempty{#1}{ }\@upn{#2}}%
    \thmnote{ {\bfseries(#3)}}}%
\theoremstyle{break}
\newtheorem{theorem}{Theorem} 
\newtheorem{thm}{Theorem} 
\newtheorem*{hyp*}{Hypothesis} 
\newtheorem{coro}[theorem]{Corollary} 
\newtheorem{defi}[theorem]{Definition} 
\newtheorem{lemma}[theorem]{Lemma} 
\newtheorem{prop}[theorem]{Proposition} 
\newtheorem{rmk}[theorem]{Remark}
\newtheorem{hyp}{Hypothesis}
\newcommand{\zerarcounters}{\setcounter{equation}{0}\setcounter{theorem}{0}}
\newcommand{\ZZZ}{\mathds{Z}} 
\newcommand{\CCC}{\mathds{C}} 
\newcommand{\NNN}{\mathds{N}} 
\newcommand{\QQQ}{\mathds{Q}} 
\newcommand{\RRR}{\mathds{R}} 
\newcommand{\TTT}{\mathds{T}} 
\newcommand{\uno}{\mathds{1}}
\newcommand{\calF}{{\mathcal F}} 
\newcommand{\calG}{{\mathcal G}} 
\newcommand{\calH}{{\mathcal H}}
\newcommand{\MM}{{\mathcal M}} 
\newcommand{\calO}{{\mathcal O}}
\newcommand{\TT}{{\mathcal T}} 
\newcommand{\calU}{{\mathcal U}} 
\newcommand{\calV}{{\mathcal V}}
\newcommand{\gotn}{{\mathfrak n}}
\newcommand{\gotI}{{\mathfrak I}} 
\newcommand{\gotJ}{{\mathfrak J}}
\newcommand{\gotR}{{\mathfrak R}} 
\newcommand{\gotT}{{\mathfrak T}}
\newcommand{\matA}{{\mathscr A}}
\newcommand{\matV}{{\mathscr V}}
\newcommand{\io}{\infty} 
\newcommand{\e}{\varepsilon} 
\newcommand{\al}{\alpha}
\newcommand{\x}{\xi}
\newcommand{\om}{\omega}
\newcommand{\oo}{\omega}
\newcommand{\nn}{\nu} 
\newcommand{\pps}{\psi} 
\newcommand{\vzero}{0} 
\newcommand{\der}{{\rm d}} 
\newcommand{\ii}{{\rm i}}
\def\tilde#1{\widetilde{#1}}
\def\ins#1#2#3{\vbox to0pt{\kern-#2 \hbox{\kern#1 #3}\vss}\nointerlineskip}
\begin{document}

\title{{\titlefont \textbf{
Response solutions for strongly dissipative\\
quasi-periodically forced systems\\
with arbitrary nonlinearities and frequencies}}}
\author 
{\bf Guido Gentile and Faenia Vaia
\vspace{2mm} 
\\ \small
Dipartimento di Matematica, Universit\`a di Roma Tre, Roma, I-00146, Italy
\\ \small 
E-mail: gentile@mat.uniroma3.it, faenia.vaia@uniroma3.it
}

\date{} 
 
\maketitle 
 
\begin{abstract} 
We consider quasi-periodically systems in the presence of dissipation
and study the existence of response solutions, i.e.~quasi-periodic solutions with
the same frequency vector as the forcing term. When the dissipation is large enough and a
suitable function involving the forcing has a simple zero, response solutions are known to exist
without assuming any non-resonance condition on the frequency vector.
We analyse the case of non-simple zeroes 
and, in order to deal with the small divisors problem, we  confine ourselves to two-dimensional frequency vectors,
so as to use the properties of continued fractions. 
We show that, if the order of the zero is odd (if it is even, in general no response solution exists),
a response solution still exists provided the inverse of the parameter measuring the dissipation
belongs to a set given by the union of infinite intervals depending on the convergents
of the ratio of the two components of the frequency vector.
The intervals may be disjoint and as a consequence
we obtain the existence of response solutions in a set with ``holes''. If we want the set
to be connected we have to require some non-resonance condition on the frequency: in fact,
we need a condition weaker than the Bryuno condition usually considered in small divisors problems.
\end{abstract} 

\zerarcounters 
\section{Introduction} 
\label{sec:1} 

Consider a conservative mechanical system which has a stable equilibrium point corresponding to a strict minimum
of the potential energy.
If a dissipative term proportional to the velocity is introduced, the equilibrium point becomes asymptotically stable.
If a further quasi-periodic forcing term is added to the equations of motion, in general the equilibrium point disappears
and -- by analogy to the the case of the periodically forced systems --  one expects
that a response solution, that is a solution with the same frequencies of the forcing term, arises by bifurcation.

However, contrary to the periodic case, when discussing the existence of a response solution in
quasi-periodically forced systems one has to deal with small divisor problems,
at least if one attempts a perturbation theory approach.
By requiring suitable non-resonance conditions on the frequencies, such as the Diophantine or the Bryuno condition \cite{B},
the small divisors can be controlled.
Still, the question remains what happens if no condition is assumed.
Because of the presence of dissipation, especially if the dissipation is large enough,
one may expect that a response solution exists for all frequencies.

As a simple paradigmatic model for a one-dimensional strongly dissipative forced system,
we consider the singular non-autonomous ordinary differential equation on $\RRR$
\begin{equation} \label{eq:1.1}
\e \ddot x + \dot x + \e g(x) = \e f(\oo t) ,
\end{equation}
where $x\in\matA \subset \RRR$, with $\matA$ an open set, the dots denote derivatives with respect to time $t$,
$\oo \in \RRR^{d}$ is the frequency vector of the forcing term and $\e \in \RRR_{+}$
is the perturbation parameter: $\e$ small means large dissipation.
The functions $g: \matA \to \RRR$ and $f: \TTT^{d} \to \RRR$,
with $\TTT=\RRR/2\pi\ZZZ$, are real analytic, and $f$ is quasi-periodic in $t$, so that it admits the Fourier series expansion
\begin{equation} \label{eq:1.2}
f(\pps)= \sum_{\nn \in \ZZZ^{d}} {\rm e}^{\ii \nn \cdot \pps} f_{\nn} ,
\end{equation}
where $\pps \in \TTT^{d}$; here and henceforth $\cdot$ denotes the standard scalar product in $\RRR^d$,
i.e. $\nn\cdot\pps=\nu_1\psi_1+\ldots+\nu_d\psi_d$.

We are interested in response solutions to \eqref{eq:1.1} which reduce to constants when $\e=0$,
that is which arise by bifurcation from solutions of the unperturbed system;
{\color{black} 
a major hindrance in the problem is that for $\e=0$ the constant solutions
form an open set, while for $\e \neq 0$ the solution is likely to be isolated.
}

Since we do not want to impose any condition on the frequency vector $\oo$, we require only that
its components are non-resonant: in fact, one can always reduce to such a case by possibly
redefining the frequency vector and taking a smaller value of $d$ \cite{GMV}.

If there exists no $x \in\matA$ such that $g(x)=f_{\vzero}$, one checks immediately that no response solution exists
which goes to a constant when $\e$ tends to zero.
Thus, we assume that the function $g(x)-f_{\vzero}$ has a zero $c$. We say that $c$ is a zero of order $\gotn$ if
\begin{equation} \nonumber
g(c)=f_{\vzero} , \qquad \frac{d^\gotn g}{dx^\gotn}(c) \neq 0 , \qquad
\frac{d^j g}{dx^j}(c) = 0 , \quad j=1, \dots, \gotn-1 .  
\end{equation}
If $\gotn$ is even, in general no response solution exists \cite{G1}, so we confine ourselves to odd values of $\gotn$.
If the zero is simple ($\gotn=1$), then we know that a response solution exists for all non-resonant frequencies \cite{GV,WD};
moreover, if $g'(c)>0$, the response solution is a local attractor \cite{G1,GV}.
So we assume the following condition involving the functions $f$ and $g$. 

\begin{hyp}[Non-degeneracy condition] \label{hyp:1}
There exists $c\in \RRR$ such that $x=c$ is a zero of odd order $\gotn > 1$ of the function $g(x)-f_{\vzero}$.
\end{hyp}

We want to study the existence of response solutions to \eqref{eq:1.1} for $\e$ small enough,
by assuming only Hypothesis \ref{hyp:1} on the nonlinearity.
If $f$ is a trigonometric polynomial of degree $N_f$, for any non-resonant $\oo\in\RRR^d$,
a response solution exists for all $\e$ less than a value $\e_0$ which depends on $N_f$ as well on $\al$ \cite{CFG2,V};
of course $\e_0$ goes to $0$ when $N_f$ goes to infinity. Therefore we are interested in extending the result to arbitrary
analytic functions $f$ or, equivalently, to find bounds which are uniform in $N_f$.
To deal with the small divisors we shall need to rely on the theory of continued fraction.
Thus we will restrict the analysis to the two-dimensional case and make the following assumption.

\begin{hyp}[\textbf{Non-resonance condition}] \label{hyp:2}
The frequency vector $\oo$ is in $\RRR^2$ and has rationally independent components, that is
$\oo \cdot \nn \ne 0$ $\forall \nn \in \ZZZ^{2}_{*}:=\ZZZ^{2}\setminus\{\vzero\}$.
\end{hyp}

Without loss of the generality we may and do assume that the frequency vector is of the form $\oo=(1,\al)$,
with $\al\in\RRR\setminus\QQQ$.

We shall see that in general, if we do not make any further assumption on $\oo$,
we do not obtain the existence of response solutions for all values of $\e$ small enough, as 
we might have to exclude a set of possibly infinitely many  intervals accumulating to the origin.
In other words, we find that the response solution exists -- or, at least, can be proved to exist --
only for $\e$ in a set with holes, whose number and sizes depend on the frequency vector.
Nevertheless, if we assume a suitable non-resonance condition, weaker than Bryuno's condition,
we are able to make the holes to disappear and recover the existence result for all $\e$ small enough.
More formal statements will be provided in the next section.

\zerarcounters 
\section{Main result}
\label{sec:2}

If $\Sigma_{\xi}:=\{ \psi \in \CCC^d : \hbox{Re}(\psi_i) \in \TTT , \, |\hbox{Im}(\psi_i)| \le \xi , \, i=\,\ldots,d\}$
denotes the strip where $f$ is analytic, the Fourier coefficients $f_{\nn}$ in \eqref{eq:1.2} satisfy the bound
\begin{equation} \label{eq:2.1}
|f_{\nn}| \le \Phi \, {\rm e}^{-\xi |\nn|} \qquad \forall \nn\in\ZZZ^d ,
\end{equation}
for a suitable positive constant $\Phi$.

Let us denote by $\Delta(z, \rho)$ the disk of center $z$ and radius $\rho$ in the complex plane.
Because of the assumption of analyticity on $g$, for any $c\in \matA$
there exists $\rho_{0}>0$ such that $g$ is analytic in $\Delta(c, \rho_{0})$.
Then for all $\rho < \rho_{0}$, if one defines $\Gamma:= \max\{|g(x)|: x \in \Delta(c, \rho)\}$, one has, under Hypothesis \ref{hyp:1},
\begin{equation} \label{eq:2.2}
g(x)= g(c) + \sum_{p=\gotn}^{\infty} g_{p} (x - c)^p, \qquad g_{p}:=\frac{1}{p!}
\frac{d^p g}{dx^p}(c) , \qquad |g_{p}| \le \Gamma \rho^{-p},
\end{equation}
where we have used Cauchy's estimates to bound the derivatives.

Fixed $\al\in\RRR\setminus\QQQ$, let  $p_{n}/q_{n}$ be the convergents of $\alpha$ (see Section \ref{sec:4} for details). 
Given two positive constants $C$ and $C'$, for all $n\in\NNN$ such that ${\rm e}^{-C' q_{n'}} \le (C q_{n'})^{-\gotn-1}$ for all $n'\ge n$ define
\begin{equation} \label{eq:2.3}
I_{n}(C,C') := \Big[ {\rm e}^{-C' q_{n}} , (C q_{n})^{-\gotn-1}\Big], \qquad
J_{n}(C,C') := 
{\color{black}
\mbox{Cl}
}
\left( \bigcup_{n' \ge n} I_{n'}(C,C') \right) ,
\end{equation}
{\color{black}
where $\hbox{Cl}$ denotes set closure.}
Note that for any values of $C$ and $C'$, the intervals $I_{n}(C,C')$ in \eqref{eq:2.3} are well defined for $n$ large enough,
{\color{black}
and that $0 \in J_{n}(C,C')$.}
In the following we fix $C'=C_{0}$, with
\begin{equation} \label{eq:2.4}
C_{0} =  \frac{\gotn+1}{4(\gotn^{2}+2\gotn-1)}\xi .
\end{equation}

We can rephrase the problem we have addressed in Section \ref{sec:1} as follows: 
under Hypotheses \ref{hyp:1} and \ref{hyp:2}, we look for solutions to \eqref{eq:1.1} of the form
\begin{equation} \label{eq:2.5}
x(t,\e)= c + X(\om t , \e) , 
\end{equation}
where 
$X(\om t, \e)$ is a quasi-periodic function which goes to 0 as $\e$ tends to 0.
We shall prove the following result.

\begin{thm} \label{thm:1}
Consider the ordinary differential equation \eqref{eq:1.1}, with $f$ analytic in the strip $\Sigma_{\xi}$ and $\oo=(1, \alpha)$,
and assume Hypotheses \ref{hyp:1} and \ref{hyp:2}. Denote by $p_{n}/q_{n}$ the convergents of $\alpha$
and let $C_{0}$ be fixed as in \eqref{eq:2.4}.
Then for any constant $C_1$ there exists 
$N\in \NNN$ such that
for all $\e \in J_{N}(C_1,C_{0})$ there is at least one quasi-periodic solution $x(t,\e)= c + X(\oo t, \e)$ to \eqref{eq:1.1}, such that
the function $X(\pps, \e)$ is analytic in $\pps$ in the strip $\Sigma_{\xi'}$, with $\xi'<\xi/4$, is continuous in $\e$ in the sense of Whitney
and goes to zero as $\e \to 0$. 
\end{thm}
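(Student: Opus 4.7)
I would look for $x(t,\e)=c+X(\oo t,\e)$ with Fourier expansion $X(\pps,\e)=\sum_{\nn\in\ZZZ^2}X_\nn(\e)\,{\rm e}^{\ii\nn\cdot\pps}$. Plugging into \eqref{eq:1.1} and isolating the $\nn=\vzero$ coefficient decomposes the problem into a bifurcation equation $[g(c+X)]_\vzero=f_\vzero$, which by Hypothesis \ref{hyp:1} and \eqref{eq:2.2} takes the schematic form $g_\gotn X_\vzero^\gotn + \calR(X_\vzero,\tilde X,\e)=0$ with $\tilde X:=X-X_\vzero$ and $\calR$ collecting all other contributions, and, for each $\nn\ne\vzero$, a range equation
\[
X_\nn = \calG_\nn(\e)\bigl(f_\nn-[g(c+X)-g(c)]_\nn\bigr), \qquad \calG_\nn(\e):=\frac{\e}{\ii\,\oo\cdot\nn-\e(\oo\cdot\nn)^2}.
\]
The plan is a Lyapunov--Schmidt reduction: after the rescaling $X_\vzero=q_n\e\,u$ and $\tilde X=q_n\e\,Y$, I would solve the range equation for $Y(u,\e)$ by Banach's fixed-point theorem in the space of zero-mean functions analytic on $\Sigma_{\xi'}$, then substitute into the bifurcation equation and solve the resulting scalar equation for $u$ via the intermediate value theorem, exploiting that $\gotn$ is odd and $g_\gotn\ne 0$.

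The heart of the proof is to control $\calG_\nn(\e)$ uniformly for $\e\in I_n(C_1,C_0)$, using the pointwise bound $|\calG_\nn(\e)|\le\min\bigl(\e/|\oo\cdot\nn|,\,1/(\oo\cdot\nn)^2\bigr)$. I would split Fourier space at the $n$-th convergent denominator $q_n$ of $\alpha$. On the \emph{low-mode block} $0<|\nu_2|<q_n$, the best-approximation property of the convergents gives $|\oo\cdot\nn|\ge|\alpha q_{n-1}-p_{n-1}|\ge 1/(2q_n)$, hence $|\calG_\nn(\e)|\le 2q_n\e$; combined with the upper endpoint $\e\le(C_1 q_n)^{-\gotn-1}$, this yields the small margin $q_n\e\le C_1^{-\gotn-1}q_n^{-\gotn}$, which after raising to the $\gotn$-th power in the nonlinear bootstrap suffices to close the contraction. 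On the \emph{high-mode block} $|\nu_2|\ge q_n$ no lower bound on $|\oo\cdot\nn|$ is available, so I would instead exploit the analyticity bound \eqref{eq:2.1}: the exponential ${\rm e}^{-\xi|\nn|}$ kills the tails once a fraction of the strip is traded for Cauchy estimates on $g$, giving a working strip $\Sigma_{\xi'}$ with $\xi'<\xi/4$, while the lower endpoint $\e\ge{\rm e}^{-C_0 q_n}$ and the sharp choice \eqref{eq:2.4} of $C_0$ ensure that the high-mode contribution remains of the right order to be absorbed by the low-mode margin.

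Once the range equation is solved, the rescaled bifurcation equation reads $g_\gotn u^\gotn+h(u,\e)=0$ with $\sup_{|u|\le R}|h(u,\e)|\to 0$ as $\e\to 0$ through $J_N(C_1,C_0)$; since $\gotn$ is odd and $g_\gotn\ne 0$, for $R$ sufficiently large the left-hand side changes sign on $[-R,R]$ and the intermediate value theorem delivers a root $u(\e)$, producing $X(\oo t,\e)=O(q_n\e)\to 0$. Whitney continuity in $\e$ would follow from the continuous dependence of the fixed point and of the scalar root on $\e$ within each $I_n(C_1,C_0)$, together with the closure built into the definition \eqref{eq:2.3} of $J_N(C_1,C_0)$ which glues the contributions from consecutive intervals. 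The main obstacle I expect is the tight bookkeeping needed to make the three scales $\e$, $q_n$ and $|\nn|$ fit simultaneously: both the exponent $\gotn+1$ on the upper endpoint of $I_n$ and the explicit constant $C_0$ in \eqref{eq:2.4} are dictated by the balance among the non-degeneracy order, the Cauchy loss of analyticity, and the amplification $1/|\oo\cdot\nn|$, so any of these estimates slipping by a factor would force the intervals $I_n$ to be empty for all but finitely many $n$.
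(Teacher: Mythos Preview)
Your Lyapunov--Schmidt split, the use of the convergent $q_n$ to separate low and high modes, and the intermediate-value argument for the bifurcation equation all match the paper's scheme. The gap is in the range equation on the high-mode block. You write that ``no lower bound on $|\oo\cdot\nn|$ is available'' there and propose to compensate with the analytic decay ${\rm e}^{-\xi|\nn|}$ of the Fourier coefficients. This cannot work as stated: for a Liouvillean $\alpha$ and $m>n$ one may have $|\oo\cdot(-p_m,q_m)|<1/q_{m+1}$ with $q_{m+1}$ growing faster than any exponential in $q_m$, so $|\calG_\nn|\,{\rm e}^{-\xi'|\nn|}\sim \e\,q_{m+1}\,{\rm e}^{-\xi' q_m}$ is not controlled by either endpoint of $I_n(C_1,C_0)$. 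The lower endpoint $\e\ge {\rm e}^{-C_0 q_n}$ involves only $q_n$, not $q_{m+1}$, and since no hypothesis is made on $\alpha$ you cannot trade $q_{m+1}$ for ${\rm e}^{Cq_m}$. In short, a direct Banach contraction with the bare propagator $\calG_\nn$ closes only under a Bryuno-type condition, which is precisely what the theorem is meant to dispense with.

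What the paper does instead is to \emph{resum} part of the nonlinearity into the propagator. Because $g'(c)=\cdots=g^{(\gotn-1)}(c)=0$, the naive linearisation has no mass, but linearising $g$ around the leading-order solution $\zeta+u^{[1]}$ produces an effective mass $\MM\sim \e\, g_\gotn\bigl[(\zeta+u^{[1]})^{\gotn-1}\bigr]_{\vzero}\sim \e\,\eta^{\gotn-1}$, with $\eta=\max(\e,|\zeta|)$. The paper implements this via a tree expansion with two scales and a renormalised propagator on scale~$1$,
\[
G^{[1]}(\oo\cdot\nn)=\frac{\chi(|\oo\cdot\nn|)}{\ii\,\oo\cdot\nn(1+\ii\e\,\oo\cdot\nn)-\MM(\oo\cdot\nn)},
\]
where $\MM$ collects the values of all self-energy clusters. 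A lower bound $|\MM|\ge A\,\e\,\eta^{\gotn-1}$ (Lemma~\ref{lem:7.2}) then gives $|G^{[1]}|\le C\,\e^{-1}\eta^{-(\gotn-1)}$ \emph{uniformly in} $\oo\cdot\nn$: the small divisor has been traded for a negative power of $\eta$. The delicate bookkeeping in Lemma~\ref{lem:8.2} balances these $\eta^{-(\gotn-1)}$ losses against the gains from the other factors and against ${\rm e}^{-\xi q_n/4}$ (which \emph{is} controlled by the lower endpoint of $I_n$); this is where the precise value of $C_0$ in \eqref{eq:2.4} and the exponent $\gotn+1$ in the upper endpoint are forced. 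If you want to avoid the tree formalism, you would still need to build this self-energy resummation into your fixed-point map---for instance by moving $\e\,g_\gotn\bigl[(\zeta+u)^{\gotn-1}\bigr]_{\vzero}\,u_\nn$ to the left-hand side before inverting---and then show that the resulting $u$-dependent propagator is bounded below; the plain contraction you sketch does not do this.
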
 

{\color{black}
We refer to refs.~\cite{dVP, Whitney} for the definition of continuity -- and differentiability -- in the sense of Whitney; here we just recall that
a function which is defined and continuous in a closed subset of $\RRR$
can be extended to a function continuous everywhere in $\RRR$.
(See also refs.~\cite{BS,FS} for a similar use of the notion of continuity in the sense of Whitney in different contexts).
}

As we shall see along the proof of the theorem, for fixed $C_1$, we write the response solution as a series depending on $\e$
and prove that there exists $\e_0>0$ small enough such that the series converges provided $|\e|<\e_0$ and $\e\in I_n(C_1,C_{0})$
for some $n\in\NNN$. The constant $\e_0$ depends on $C_1$, more precisely it is of the form $\e_0=C_1^{\gotn(\gotn+1)}\eta_0$,
with $\eta_0$ depending on all the other parameters but $C_1$ -- i.e.~$\xi$, $\rho$, $\Phi$ and $\Gamma$.
Once $\e_0$ has been determined, the constant $N$ must be taken so that
 \begin{subequations} \label{eq:2.6}
\begin{align} 
\frac{1}{(C_{1} q_{N})^{\gotn+1}} &\le \e_{0},
\label{eq:2.6a} \\
\frac{1}{(C_{1}q_{n})^{\gotn+1}} & \ge {\rm e}^{- C_{0}q_{n}} \quad \forall n \ge N ,
\label{eq:2.6b}
\end{align}
\end{subequations}
which ensure the convergence of the series for all $\e$ in the set $J_N(C_1,C_{0})$.
Note that \eqref{eq:2.6b} is satisfied for all $n$ if $C_1$ is taken small enough and
for all positive $C_1$ if $n$ is large enough.

If one looks for optimal bounds, one has to choose $C_1$ in order to make $\e_0$ as large as possible and,
at the same time, reduce the sizes of the holes (see Section \ref{sec:10}).
Indeed, the intervals $I_n(C_1,C_{0})$ may be disjoint: since there is in general no a priori relation between
${\rm e}^{-C_{0} q_{n}} $ and $(C_{1} q_{n+1})^{-\gotn-1}$, it may happen that for some $n \ge N$
the intervals $I_{n}(C_1,C_{0})$ and $I_{n+1}(C_1,C_{0})$ are as represented in Figure \ref{rettaaa}.

\begin{figure}[h]
\vspace{-1.6cm}
	\centering
	\ins{168pt}{-78pt}{$\frac{1}{(C_{1}q_{n+1})^{\gotn+1}}$}
	\ins{265pt}{-78pt}{$\frac{1}{(C_{1}q_{n})^{\gotn+1}}$}
	\ins{220pt}{-78pt}{${\rm e}^{-C_{0}q_{n}} $}
	\ins{126pt}{-78pt}{${\rm e}^{-C_{0}q_{n+1}} $}
	\ins{100pt}{-63pt}{$\dots$}
	\ins{280pt}{-58pt}{$\e_{0}$}
	\ins{252pt}{-53pt}{$I_{n}$}
	\ins{160pt}{-53pt}{$I_{n+1}$}
         \null\hspace{-2.4cm}
	\subfigure{\includegraphics[scale=0.45]%
		{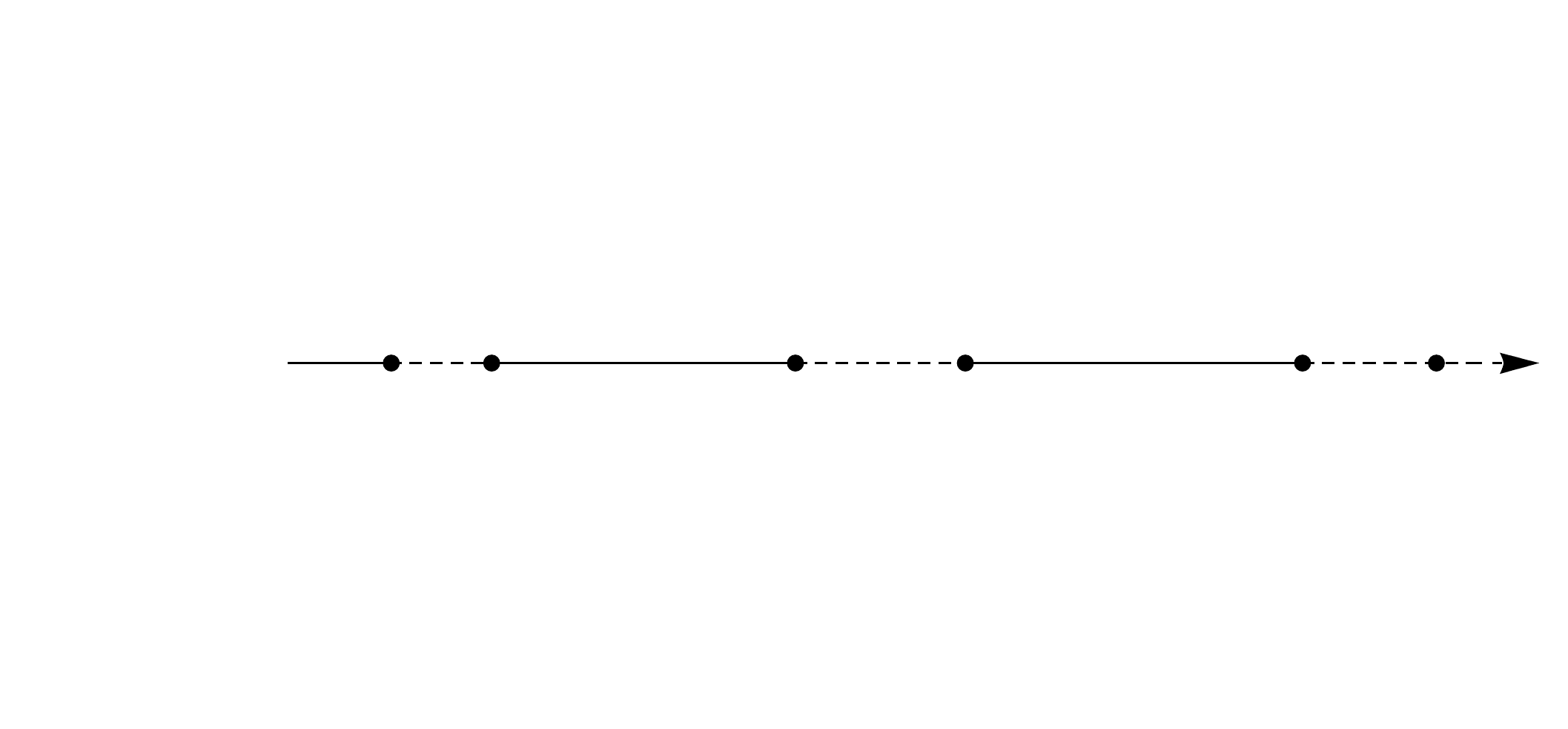}}
\vspace{-1.4cm}
	\caption{Interval $I_n=I_{n}(C_1,C_{0})$ and $I_{n+1}=I_{n+1}(C_1,C_{0})$ in the case they are disjoint.}
	\label{rettaaa}
\end{figure}

The overall measure of the allowed intervals depends on the irrational number $\alpha$, in particular on its convergents $p_{n}/q_{n}$,
so we cannot  say in general whether it is either large or small.  Once $N >0$ has been fixed as in Theorem \ref{thm:1},
a natural problem to address is whether it is possible to obtain the result of existence for all $|\e| < \e_{0}$ by imposing
some further condition on $\oo$. This is equivalent to requiring
\begin{equation} \label{eq:2.7}
{\rm e}^{-C_{0} q_{n}} \le (C_{1} q_{n+1})^{-\gotn-1}, \qquad n \ge N,
\end{equation}
so as to have the situation represented in Figure \ref{senzabuchi} for all $n\ge N$.

\begin{figure}[h]
\vspace{-0.4cm}
	\centering
	\ins{295pt}{-43pt}{$\e_{0}$}
	\ins{267pt}{-58pt}{$\frac{1}{(C_{1}q_{n})^{\gotn+1}}$}
	\ins{203pt}{-70pt}{$\frac{1}{(C_{1}q_{n+1})^{\gotn+1}} $}
	\ins{185pt}{-58pt}{${\rm e}^{-C_{0}q_{n}}$}
	\ins{118pt}{-70pt}{$\frac{1}{(C_{1}q_{n+2})^{\gotn+1}} $}
	\ins{105pt}{-58pt}{${\rm e}^{-C_{0}q_{n+1}}$}
	\ins{107pt}{-20pt}{$I_{n+2}$}
	\ins{233pt}{-30pt}{$I_{n}$}
	\ins{158pt}{-24pt}{$I_{n+1}$}
	\null\hspace{1cm}
	\subfigure{\includegraphics[scale=0.45]%
		{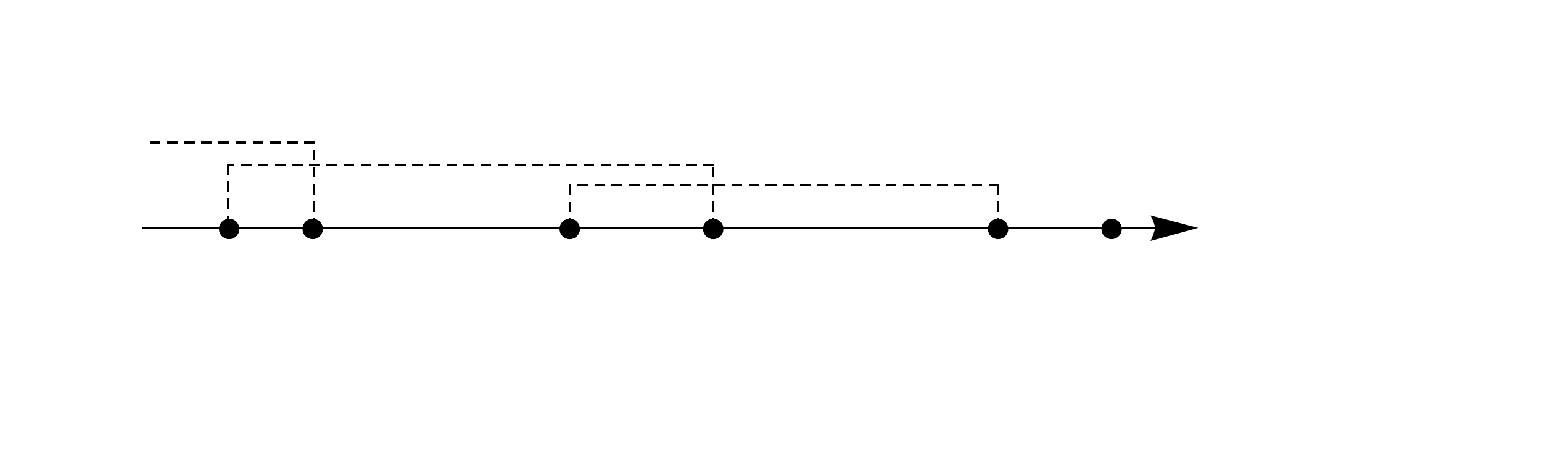}}
\vspace{-0.4cm}
	\caption{Intervals $I_n=I_{n}(C_1,C_{0})$, $n\ge N$, in the case in which there are no holes.}
	\label{senzabuchi}
\end{figure}

From the results available in the literature \cite{GBD1,GBD2,G2}, we know that, taken $\e_{0}>0$,  if $\oo$ is Diophantine or Bryuno,
response solutions exist for all $\e$ in a set without holes \cite{G1,CFG2}.
Hence it is not surprising that \eqref{eq:2.7}  is satisfied when $\oo$ is Diophantine or Bryuno. 

Indeed, if $\oo$ is Diophantine,  i.e.~if there exist two constants $\gamma>0$ and $\tau\ge 1$ such that
$|\oo\cdot\nn| > \gamma|\nu|^{-\tau}$ for all $\nn\in \ZZZ^2_*$, condition \eqref{eq:2.7} easily follows by using that
\begin{equation} \label{eq:2.8} 
\frac{1}{(C_{1} q_{n+1})^{\gotn+1}} \ge \frac{1}{(K_{0}C_{1}q_{n}^{\tau})^{\gotn+1}} >  {\rm e}^{-C_{0} q_{n}} , \qquad n \ge N , 
\end{equation}
with $K_{0}:=\gamma^{-1}(1+4\al^2)^{\tau/2}$, $C_{0}$ as in \eqref{eq:2.1} and $C_{1}$ and $N$ suitably chosen.

If $\oo$ is a Bryuno vector, i.e.~if the sequence
\begin{equation} \label{eq:2.9} 
\e_{n}(\alpha):= \frac{1}{q_{n}} \log q_{n+1}
\end{equation}
is summable \cite{B,MMY,M}, then \eqref{eq:2.7} is satisfied once more, because
\begin{equation} \label{eq:2.10} 
\begin{split}
\log(C_{1}q_{n+1})^{\gotn+1} &=(\gotn+1) \log C_{1} + (\gotn+1) \log q_{\gotn+1} \\
&= (\gotn+1) \log C_{1} + (\gotn+1) q_{n}\e_{n}(\alpha) < C_{0} q_{n}
\end{split}
\end{equation}
with $C_{0}$ as in \eqref{eq:2.3} and $C_{1}$ chosen according to the Theorem.
A better characterisation of the vectors for which there are no holes is provided in Section \ref{sec:10}.

In general, the opposite is not true: there are vectors which satisfy \eqref{eq:2.7}, but are not Bryuno vectors.
In fact, it is sufficient that $\e_n(\alpha)$ be small enough so as to satisfy the bound \eqref{eq:2.10},
for some constant $C_1$ possibly depending on $n$. Again we refer to Section \ref{sec:10} for further details.

To summarise, for $\gotn \ge 3$ (odd) and $d=2$, if we do not impose \eqref{eq:2.7} on the convergents of $\alpha$,
we find that the response solutions exist in a set with holes; instead if we require \eqref{eq:2.7}
we have the existence of response solutions for all $|\e|<\e_{0}$ and for a class of frequency vectors
which satisfy a condition weaker than the Bryuno condition -- and also weaker than the condition that $\e_{n}(\alpha) \to 0$ as $n\to\io$.
The latter result improves, for $d=2$, the condition on $\oo$ assumed in previous results available in the literature,
where stronger non-resonance conditions were considered for the frequency vector \cite{CCD1,G1,G2}.

It would be interesting to investigate whether the Theorem above can be extended to higher dimensional frequency vectors
and if existence of a response solution can be proved for values of $\e$ inside the holes left out by the Theorem,
as it happens in the case $\gotn=1$, where the response solutions are known to exist for any $d$ and for $\e$ in set with no holes \cite{GV,WD}.
Another generalisation to look for, in the case $\gotn >1$, is to higher dimensions for the space variable $x$,
as already studied in the case of simple zeroes \cite{GMV,WD}.

\zerarcounters 
\section{Some related results in the literature}
\label{sec:3}

Existence of quasi-periodic and almost-periodic solutions to ordinary differential equations 
in problems where no hypothesis is made on the frequencies has not been studied in the literature
as extensively as in the case in which one requires some non-resonance condition such as the Diophantine or the Bryuno condition.
In the framework of KAM theory, both rigorous results \cite{Fo,Be} and strong numerical evidence \cite{CCV,LL,O,T} suggest
that, in general, the invariant tori of the unperturbed system which are close to resonance break up when the perturbation is switched on;
therefore, if one is looking for results holding for all frequencies
one has to consider either conservative systems away from the KAM regime \cite{BC1,BC2,BZ} or non-conservative systems.

Typically response solutions in forced systems arise by bifurcation.
Bifurcation phenomena have been widely investigated in the literature -- see for instance the works by
Broer, Han{\ss}mann and coauthors \cite{BV,BGV,BHJVW,CLB,H1,H2,H3,H4}.
If no condition is assumed on the forcing frequencies, in order to bypass the small divisor problem,
a non-degeneracy condition is generally assumed on the bifurcating solution,
usually a condition of hyperbolicity or exponential dichotomy \cite{BB,Co,F,He,XLC};
for example in ref.~\cite{BB} an invariant torus bifurcates from the equilibrium point under a suitable assumption of hyperbolicity.

On the contrary, in the problem addressed in the present paper,
the unperturbed bifurcating solution is not given a priori and \emph{a fortiori} no stability or hyperbolicity condition is assumed:
any constant $c$ is a solution to \eqref{eq:1.1} when $\e=0$ and the existence of response solution is proved
for a special value of the constant. In fact, equation \eqref{eq:1.1} is a particular case of the equation
\begin{equation}
\label{eq:3.1}
\dot{x}= f(t,x,y,\e), \qquad \e \dot{y} =g(t,x,y,\e),
\end{equation}
with $\e>0$, $(x,y)\in \RRR^{n}\times \RRR^{m}$ and $f,g$ almost periodic in time,
which is studied in refs.~\cite{C,HS}. However,  the point of view considered in refs.~\cite{C, HS} is different
since the assumptions on the almost periodic solution of the unperturbed system
\begin{equation}
\label{eq:3.2}
\dot{x}= f(t,x,y,0), \qquad 0 =g(t,x,y,0),
\end{equation}
in particular the hypothesis that a non-degenerate almost periodic solution exists,
make the problem is of hyperbolic type and no small divisors appear.
Analogous results for  equations of the form \eqref{eq:3.1} are also provided in similar models
in both the periodic \cite{A,F,FL,Ve,W} and almost periodic context \cite{Sm}.

As we said in the previous sections, we will use the theory of continued fractions
to bound the small divisors in the case of analytic $f$:
the continued fractions are used to deal with Liouvillean frequncies in refs.~\cite{LG, WYZ}. 
In ref.~\cite{LG} the following equation is studied:
\begin{equation}
\label{eq:3.3}
\ddot{x} + \lambda^{2} x = \e F(\oo t, x, \dot{x}), \quad x \in \RRR, \quad \oo=(1, \alpha), \,\, \alpha \in \RRR\setminus\QQQ,
\end{equation}
where $\e,\, \lambda$ are positive parameters and $F: \TTT^{2} \times \RRR\times \RRR \to \RRR $
is a real analytic function satisfying the reversibility condition
\begin{equation}
\label{eq:3.4}
F(\pps,x,y)= F(-\pps, x, -y).
\end{equation}
Without assuming $\oo$ to satisfy any Diophantine or Bryuno conditions,
for any closed interval $\calO \subset \RRR \setminus \{0\}$ and any sufficiently small $\gamma >0$, 
a response solution to \eqref{eq:3.4} is proved to exist for $\e$ small enough and for $\lambda$
in a Cantor set $\mathcal{O}_{\gamma} \subset \calO$ with large relative Lebesgue measure.
The equation considered in \cite{WYZ} is  different with respect to \eqref{eq:3.3} since the function $F$ does not depend on $\dot{x}$.
Both \cite{LG, WYZ} use the properties of the continued fractions theory and  exploit the so-called
CD-bridge between the denominators of the best rational approximations of continued fractions \cite{AFK}.
Once more our problem is of a different kind, since \eqref{eq:1.1}
is a singular differential equation and does not verify the reversibility condition in \eqref{eq:3.4}

\zerarcounters
\section{Strategy of the proof}
\label{sec:4}

Let us write in \eqref{eq:2.5}
\begin{equation} \label{eq:4.1}
X(\om t,\e) = \zeta + u(\oo t; \e,\zeta,c) , \qquad u(\pps; \e,\zeta,c) = 
\sum_{\nn\in\ZZZ^{d}_{*}} {\rm e}^{\ii\nn\cdot\pps} u_{\nn} ,
\end{equation}
where $\ZZZ^d_*:=\ZZZ^d\setminus\{\vzero\}$, $\zeta$ is a real parameter to be fixed
and $\pps \mapsto u(\pps;\varepsilon,\zeta,c)$ is a zero-average quasi-periodic function,
with Fourier coefficients depending on $\varepsilon$, $\zeta$ and $c$ -- even though
we shall not make explicit such a dependence in order not to overwhelm the notations.

We can write \eqref{eq:1.1} in Fourier space as
\begin{equation} \label{eq:4.2}
(\ii \oo \cdot \nn)(1 + \ii \e \oo \cdot \nn)u_{\nn} + \e [g(c + \zeta + u)]_{\nn} = \e f_{\nn} \qquad \nn \ne \vzero,
\end{equation}
provided for $\nn=\vzero$ one has
\begin{equation} \label{eq:4.3}
[g(c + \zeta + u)]_{\vzero} = f_{\vzero}.
\end{equation}
%
The notation $[F(c+\zeta+u)]_{\nn}$, for any function $F(x)$ analytic in its argument, means that
we first consider the Taylor expansion of $F(c+\zeta+u)$ about the point $c$,
then write $u$ as a Fourier series according to \eqref{eq:4.1}, and finally keep the Fourier coefficient with index $\nn$:
\begin{equation} \label{eq:4.4}
[F(c + \zeta + u)]_{\nn} = \sum_{p=0}^{\io} \frac{1}{p!} \frac{\der^p F}{\der x^p} (c)
\sum_{q=0}^{\io} \left( \begin{matrix} p \\ q \end{matrix} \right) \zeta^{p-q} \!\!\!\!
\sum_{\substack{\nn_1,\ldots,\nn_q\in\ZZZ_*^d \\ \nn_1+\ldots+\nn_q =\nn}} \!\!\!\! u_{\nn_1} \ldots u_{\nn_q} .
\end{equation}
We call \eqref{eq:4.2} the \emph{range equation} and \eqref{eq:4.3} the \emph{bifurcation equation},
see also \cite{CH}. We first study the range equation looking for a solution to \eqref{eq:4.2},
depending on the parameter $\zeta$ that is supposed to be close enough to zero.
Then we analyse \eqref{eq:4.3} and fix $\zeta$ in order to make such an equation to be satisfied as well. 

{\color{black}
We find that the parameter $\zeta$, in general, is no more than continuous in $\e$.
In our case, this suffices, since all we need is to prove that the parameter goes to $0$ as $\e$ tends to to zero.
Of course, in principle more regularity is possible, and the parameter could have different branches,
as found in similar contexts when bifurcation phenomena occur. One could even think that a fractional power series
may be constructed: for instance, this happens for both the Melnikov problem \cite{CG}
and lower dimensional tori of codimension 1 \cite{GGG}, when the case of higher order zeroes is considered.
For the problem under study the situation is more delicate, since already in the case of simple zeroes
in general no more than continuity is found \cite{GV}. In any case, we do not exclude that stronger regularity results
may be obtained, possibly with different methods; see also ref.~\cite{CCD1,CFG1,CCD2} for some results
about the form of the analyticity domains in dissipative perturbations of Hamiltonian systems.
}

\zerarcounters
\section{A brief review on the continued fractions}
\label{sec:5}

In this section we review some basic properties of the continued fraction that we shall use later on;
we refer to refs.~\cite{HW,K,S} for details and proofs. A \emph{finite continued fraction} is an expression of the form
\begin{equation} \nonumber
[a_{0}, \dots, a_{n}] = a_{0} + \cfrac{1}{a_{1}+ \cfrac{1}{a_{2} + \cfrac{1}{\ddots +\cfrac{1}{a_{n}}}}}
\end{equation}
with $a_{0} \in \RRR$ and $a_{1}, \dots,a_{n} \in \RRR_{+}$;
the corresponding coefficients $a_{0},\dots, a_{n}$  are called the \emph{partial quotients} of the finite continued fraction.
An \emph{infinite continued fraction} is defined as the limit for $n\to \infty$ of $x_{n} :=  [a_{0},\dots,a_{n}]$, i.e.
\begin{equation} \nonumber
x = [a_{0}, a_{1}, a_{2}, \dots] = 
\lim_{n\to\infty} [a_{0},\dots,a_{n}] = a_{0} + \cfrac{1}{a_{1}+ \cfrac{1}{a_{2} + \cfrac{1}{\ddots +\cfrac{1}{\ddots}}}}
\end{equation}
when the limit exists; in such a case we say that the continued fraction converges.
A continued fraction is called \emph{simple} if
{\color{black}
$a_0\in\ZZZ$ and
$a_i\in\NNN$ for all $i \ge 1$.}
Any irrational number $x \in \RRR\setminus \QQQ$ is represented as a unique simple infinite continued fraction.

%
	%
	%
	%
	%
	%
	%
	%
	%
	%
	%

\begin{prop}
	\label{prop:5.1}
	Given a simple continued fraction $[a_{0}, a_{1}, a_{2}, \dots]$, 
	define
	\vspace{-.2cm}
	\begin{eqnarray}
	& & p_{0} =a_{0}, \qquad p_{1}= a_{1}a_{0} +1, \qquad p_{k}= a_{k} p_{k-1} + p_{k-2}, \qquad k\ge 2 , \nonumber \\
	%
	& & q_{0} =1, \qquad q_{1}= a_{1}, \qquad q_{k}= a_{k} q_{k-1} + q_{k-2}, \qquad  k \ge 2. \nonumber
	\end{eqnarray}
	Then one has
	\begin{equation} \nonumber
	x_{k}:= \frac{p_{k}}{q_{k}}= [a_{0}, \dots, a_{k}], \qquad  k \ge 0. 
	\end{equation}
	We call $x_k$ the $k$-th \emph{convergent} of the continued fraction.
\end{prop}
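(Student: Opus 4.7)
The plan is to prove the statement by induction on $k$, after strengthening the inductive hypothesis to allow an arbitrary positive real as the last partial quotient. Specifically, I would prove that for every $k \ge 1$ and every $y>0$,
\[
[a_0, a_1, \dots, a_{k-1}, y] \,=\, \frac{y\, p_{k-1} + p_{k-2}}{y\, q_{k-1} + q_{k-2}},
\]
where one sets $p_{-1}:=1$ and $q_{-1}:=0$ so that the formula also covers the case $k=1$. The original claim $x_k = p_k/q_k$ then follows at once by specialising $y=a_k$ (and is trivially true for $k=0$, since $[a_0]=a_0=p_0/q_0$).

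The base cases $k=0$ and $k=1$ are verified by direct computation from the definitions. For the inductive step from $k$ to $k+1$, I would use the elementary identity
\[
[a_0, a_1, \dots, a_k, y] \,=\, [a_0, a_1, \dots, a_{k-1},\, a_k + 1/y],
\]
which is immediate from the definition of a finite continued fraction. Applying the inductive hypothesis at level $k$ with $a_k+1/y$ in place of the last entry, then multiplying numerator and denominator by $y$ and invoking the recurrences $p_k = a_k p_{k-1} + p_{k-2}$ and $q_k = a_k q_{k-1} + q_{k-2}$, the expression collapses to $(y\, p_k + p_{k-1})/(y\, q_k + q_{k-1})$, which is precisely the auxiliary formula at level $k+1$.

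The result is classical and the algebra is routine; the only non-trivial point is the choice of inductive statement, since a naive induction on $x_k = p_k/q_k$ does not propagate -- passing from $[a_0, \dots, a_k]$ to $[a_0, \dots, a_{k+1}]$ is not a matter of appending one more convergent to an existing one, but of replacing the last entry of a shorter continued fraction by a non-integer quantity. Once the enhanced hypothesis is in place, this is the only subtle step and the entire argument reduces to the one-line manipulation above.
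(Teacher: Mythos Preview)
Your argument is correct and is the standard textbook proof: strengthening the inductive hypothesis to allow an arbitrary positive real last entry is exactly what makes the induction go through, and the algebra you describe is routine. The paper itself does not prove this proposition at all --- it simply states it as part of a review of continued fractions and refers to standard references (Hardy--Wright, Khinchin, Schmidt) for proofs --- so there is nothing to compare against beyond noting that your proof is the classical one found in those very references.
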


\begin{prop}[Properties of the convergents]
	\label{prop:5.2}
	Given $x\in\RRR\setminus\QQQ$, let $p_{k}, q_{k}, x_{k}$ be defined as in Proposition \ref{prop:5.1}. One has:
	\vspace{-.2cm}
	\begin{enumerate}
	\itemsep0em
		\item[\emph{(1)}] $q_{k+1} > q_{k} > 0$ $\forall k > 0$;
		\item[\emph{(2)}] $p_{k} > 0\, [p_{k}<0]$ $\forall k >0$ if $\,x>0 \, [x<0]$;
		\item[\emph{(3)}] $p_{k}q_{k-1} - p_{k-1}q_{k} = (-1)^{k-1}\,$ for $\, k\ge 1$
		and $p_{k}q_{k-2} - p_{k-2}q_{k} = (-1)^{k}a_{k}\,$ for $\, k \ge 2$;
		\item[\emph{(4)}] $x_{2k}$ is strictly increasing;
		\item[\emph{(5)}] $x_{2k+1}$ is strictly decreasing;
		\item[\emph{(6)}]  $x_{2k} < x < x_{2k+1}$ for all $k\in \NNN$.
	\end{enumerate}
\end{prop}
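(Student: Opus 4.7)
The plan is to handle the six properties essentially in the order given, since each one feeds into the next, and to do everything by induction on $k$ using the defining three-term recurrences for $p_k$ and $q_k$.

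First I would dispatch (1) and (2) in one short induction. Property (1) is immediate: since $a_k \in \NNN$ for $k \ge 1$, the recurrence $q_k = a_k q_{k-1}+q_{k-2}$ together with the base cases $q_0=1$, $q_1=a_1\ge 1$ yields $q_k \ge q_{k-1}+q_{k-2} > q_{k-1} > 0$ for $k \ge 2$. For (2), if $x>0$ then $a_0=\lfloor x\rfloor \ge 0$, so $p_1 = a_1 a_0+1 \ge 1 > 0$, and the same recurrence then propagates positivity. The case $x<0$ is handled symmetrically (or by observing that $[-a_0-1, 1, a_1-1, a_2, \ldots]$ represents $-x$).

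Next I would prove the two determinant identities in (3) by a single induction on $k$. The base case $k=1$ is $p_1 q_0 - p_0 q_1 = (a_1 a_0+1)\cdot 1 - a_0\cdot a_1 = 1 = (-1)^0$. For the inductive step, I substitute $p_k = a_k p_{k-1}+p_{k-2}$ and $q_k = a_k q_{k-1}+q_{k-2}$ into $p_k q_{k-1}-p_{k-1}q_k$; the $a_k$-terms cancel and what remains is $-(p_{k-1}q_{k-2}-p_{k-2}q_{k-1}) = -(-1)^{k-2} = (-1)^{k-1}$. The second identity of (3) follows at once from the first: expanding $p_k q_{k-2}-p_{k-2}q_k$ via the recurrences gives $a_k (p_{k-1}q_{k-2}-p_{k-2}q_{k-1}) = a_k(-1)^{k-2} = (-1)^k a_k$.

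With (3) in hand, properties (4)--(6) are essentially formal. Dividing the identities by the appropriate products of $q$'s yields
\begin{equation*}
x_k - x_{k-1} = \frac{(-1)^{k-1}}{q_k q_{k-1}}, \qquad x_k - x_{k-2} = \frac{(-1)^k a_k}{q_k q_{k-2}},
\end{equation*}
which, using the positivity from (1) and the fact that $a_k \ge 1$ for $k\ge 1$, immediately give (4), (5), and the inequality $x_{2k} < x_{2k+1}$. Combined with (1), the first identity also shows $|x_k - x_{k-1}| \to 0$, so the increasing sequence $(x_{2k})$ and the decreasing sequence $(x_{2k+1})$ share a common limit, which is (by construction of the continued fraction) $x$ itself. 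Since $x \notin \QQQ$ while each $x_k \in \QQQ$, the inequalities are strict and we obtain (6).

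I do not anticipate any real obstacle; the only point requiring a little care is the base cases in (2) when $a_0$ may be zero or negative, and the invocation in (6) of the fact that the simple continued fraction of $x$ actually converges to $x$, which is a standard consequence of the estimate $|x-x_k| \le 1/(q_k q_{k+1}) \to 0$ derived from the same identity used for (4)--(5).
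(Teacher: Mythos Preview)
Your proof is correct and follows the standard textbook approach. The paper itself does not prove Proposition~\ref{prop:5.2}; it states it as part of a review section and refers the reader to the classical references \cite{HW,K,S} for details and proofs, so there is no in-paper argument to compare against.
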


\begin{prop}
	\label{prop:5.3}
	Let $x $ be an irrational number and let $\{p_{k}/q_{k}\}$ be the convergents of the continued
	{\color{black}
	simple}
	fraction representing $x$. Then 
	for all $k\ge 0$ one has
	\begin{equation} \nonumber
	\frac{1}{q_{k}(q_{k} + q_{k+1})} < \Big|x - \frac{p_{k}}{q_{k}}\Big| < \frac{1}{q_{k}q_{k+1}} .
	\end{equation}
\end{prop}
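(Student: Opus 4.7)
The plan is to use the standard device of the ``complete quotient'' (or tail) of the continued fraction. For $x=[a_0,a_1,a_2,\dots]$ and $k\ge 0$, introduce
\begin{equation*}
x'_{k+1} := [a_{k+1},a_{k+2},a_{k+3},\dots],
\end{equation*}
so that $x = [a_0,a_1,\dots,a_k,x'_{k+1}]$. A direct induction on the recursion for $p_k,q_k$ stated in Proposition \ref{prop:5.1}, applied to this finite continued fraction whose last partial quotient is $x'_{k+1}$, yields the representation
\begin{equation*}
x \;=\; \frac{x'_{k+1}\,p_k + p_{k-1}}{x'_{k+1}\,q_k + q_{k-1}}.
\end{equation*}

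Subtracting $p_k/q_k$ and using the determinant identity $p_kq_{k-1}-p_{k-1}q_k=(-1)^{k-1}$ from Proposition \ref{prop:5.2}(3), one obtains
\begin{equation*}
x - \frac{p_k}{q_k} \;=\; \frac{(-1)^k}{q_k\bigl(x'_{k+1} q_k + q_{k-1}\bigr)},
\end{equation*}
hence
\begin{equation*}
\left|\,x - \frac{p_k}{q_k}\,\right| \;=\; \frac{1}{q_k\bigl(x'_{k+1} q_k + q_{k-1}\bigr)}.
\end{equation*}
The problem therefore reduces to sandwiching the quantity $x'_{k+1} q_k + q_{k-1}$ between $q_{k+1}$ and $q_k+q_{k+1}$.

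For this, I would use that $x'_{k+1} = a_{k+1} + 1/x'_{k+2}$ with $x'_{k+2}>1$ (since $a_{k+2}\ge 1$ and the continued fraction is infinite because $x$ is irrational), whence
\begin{equation*}
a_{k+1} \;<\; x'_{k+1} \;<\; a_{k+1}+1.
\end{equation*}
Multiplying by $q_k$, adding $q_{k-1}$, and invoking the recursion $q_{k+1}=a_{k+1}q_k+q_{k-1}$ gives
\begin{equation*}
q_{k+1} \;<\; x'_{k+1}q_k + q_{k-1} \;<\; q_{k+1}+q_k,
\end{equation*}
from which the two desired inequalities follow immediately.

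The only mild subtlety is making sure that the strict inequalities are really strict: the left-hand strict inequality $a_{k+1}<x'_{k+1}$ uses irrationality of $x$ (equivalently, that the expansion does not terminate at step $k+1$), and the right-hand one $x'_{k+1}<a_{k+1}+1$ uses $x'_{k+2}>1$, which holds because $a_{k+2}\ge 1$ and again the tail is non-terminating. No further estimate is needed; the proof is essentially a two-line computation once the tail representation of $x$ is written down.
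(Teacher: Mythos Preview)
Your argument is correct and is precisely the standard textbook proof (via the complete quotient $x'_{k+1}$ and the identity $x=(x'_{k+1}p_k+p_{k-1})/(x'_{k+1}q_k+q_{k-1})$). The paper does not give its own proof of this proposition: Section~\ref{sec:5} explicitly states that the properties of continued fractions are only reviewed and that ``we refer to refs.~\cite{HW,K,S} for details and proofs'', so there is nothing to compare against beyond noting that your proof is exactly the one found in those references.

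One small remark: the case $k=0$ uses $p_{-1}=1$, $q_{-1}=0$, which are not defined in Proposition~\ref{prop:5.1} as stated; you may want to either mention this standard convention or verify the $k=0$ inequalities directly from $x=a_0+1/x'_1$.
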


	A rational number $p/q$, with $q>0$ and  \emph{GCD}$(p,q)=1$, is called a \emph{best rational approximation} of $x \in \RRR \setminus \QQQ$ if 
	\begin{equation} \nonumber
	|nx - m| > |qx -p|, \qquad \forall m,n \in \ZZZ : 0 < |n| \le q \,\, \text{and}\,\, \frac{m}{n}\ne \frac{p}{q}.
	\end{equation}
	%
%

\begin{prop}
	\label{prop:5.4}
	Any rational number $p/q$ is one of the best rational approximation if and only if $p/q$ is a convergent of $x$.
\end{prop}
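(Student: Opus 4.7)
The plan is to prove both directions of the equivalence using the standard tools of continued fractions already developed in Propositions \ref{prop:5.1}--\ref{prop:5.3}, in particular the determinant identity $p_k q_{k-1} - p_{k-1} q_k = (-1)^{k-1}$ and the fact that $x$ lies strictly between consecutive convergents. The key auxiliary claim I would establish first is the following strengthened form of the forward direction: \emph{if $p_k/q_k$ is any convergent and $m/n$ is any rational with $0 < n \le q_{k+1}$ and $m/n \ne p_k/q_k$, then $|nx-m| \ge |q_k x - p_k|$, with strict inequality whenever $n \le q_k$ and $(m,n)\ne(p_k,q_k)$.} From this claim, both directions of Proposition \ref{prop:5.4} fall out quickly.

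First I would prove this auxiliary claim. Given $m/n$ with $0<n\le q_{k+1}$, I use the determinant identity to conclude that the linear system
\begin{equation} \nonumber
a\,p_k + b\,p_{k-1} = m , \qquad a\,q_k + b\,q_{k-1} = n
\end{equation}
admits a unique integer solution $(a,b)\in\ZZZ^2$, and that $(a,b)\ne(0,0)$ since $n>0$. I then analyse signs: from $0<n\le q_{k+1}$ together with the recursion $q_{k+1}=a_{k+1}q_k + q_{k-1}$, and from the fact (Proposition \ref{prop:5.2}, items (4)--(6)) that $q_k x - p_k$ and $q_{k-1} x - p_{k-1}$ have opposite signs, one deduces that $a$ and $b$ have appropriate signs so that the two terms in
\begin{equation} \nonumber
nx-m = a\,(q_k x - p_k) + b\,(q_{k-1} x - p_{k-1})
\end{equation}
point in the same direction. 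Consequently $|nx-m| = |a|\,|q_k x - p_k| + |b|\,|q_{k-1} x - p_{k-1}| \ge |q_k x - p_k|$, with strictness when $n\le q_k$ forcing $b\ne 0$ (otherwise $n=aq_k$ would force $a=1$ and hence $m/n=p_k/q_k$).

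The implication that convergents are best approximations is then immediate: for any $m/n$ with $0<|n|\le q_k$ and $m/n\ne p_k/q_k$ (by symmetry we may take $n>0$), the claim gives $|nx-m| > |q_k x - p_k|$, which is exactly the defining inequality. For the converse, suppose $p/q$ is a best rational approximation; let $k$ be maximal such that $q_k \le q$, so that $q_k \le q < q_{k+1}$. Applying the auxiliary claim to the convergent $p_k/q_k$ with denominator bound $q<q_{k+1}$ yields $|qx-p|\ge |q_k x - p_k|$ unless $(p,q)=(p_k,q_k)$. On the other hand, the best-approximation property of $p/q$, applied to $p_k/q_k$ with $q_k\le q$, forces $|q_k x - p_k| > |qx-p|$ whenever $p_k/q_k \ne p/q$. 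These inequalities are incompatible, so $p/q = p_k/q_k$ is a convergent.

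The main obstacle is really the sign analysis in the auxiliary claim: one must carefully treat the boundary cases (in particular, when one of $a,b$ vanishes, or when $n=q_{k+1}$) to guarantee the inequality is actually strict in the cases we need; once the sign bookkeeping is done, the rest is a clean bootstrap from the strong form of the forward implication to the full equivalence.
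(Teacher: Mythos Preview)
The paper does not actually prove Proposition~\ref{prop:5.4}; it is quoted as a classical fact from the references on continued fractions (Hardy--Wright, Khinchin, Schmidt). So there is no ``paper's proof'' to compare against, only the textbook argument, which is indeed the route you outline.

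Your overall strategy is the standard one and is sound, but there are two concrete problems in the execution.

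First, the auxiliary claim as you state it is false: you allow $0<n\le q_{k+1}$, but $(m,n)=(p_{k+1},q_{k+1})$ already gives $|q_{k+1}x-p_{k+1}|<|q_kx-p_k|$ by Proposition~\ref{prop:5.5}. The correct hypothesis is $0<n<q_{k+1}$ (strict), which is in fact all you use in the converse direction.

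Second, and more substantively, with the basis $(p_{k-1},q_{k-1}),(p_k,q_k)$ your sign assertion that ``the two terms point in the same direction'' is not true. If $a,b\ge 1$ then $n=aq_k+bq_{k-1}$ can still satisfy $n<q_{k+1}$ (take $a=1$, $b=1$ when $a_{k+1}\ge 2$), and then $a(q_kx-p_k)$ and $b(q_{k-1}x-p_{k-1})$ have \emph{opposite} signs, so the triangle-inequality step you describe fails. The inequality $|nx-m|\ge|q_kx-p_k|$ is nevertheless true in this case, but proving it requires the extra quantitative input $|q_{k-1}x-p_{k-1}|=a_{k+1}|q_kx-p_k|+|q_{k+1}x-p_{k+1}|$ together with $a\le a_{k+1}-1$, which is more than the bare sign bookkeeping you sketch. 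The clean fix is to use the basis $(p_k,q_k),(p_{k+1},q_{k+1})$ instead: for $0<n<q_{k+1}$ one then genuinely forces the integer coefficients to have opposite signs (or one of them to vanish), and your argument goes through verbatim. This is the version found in the cited textbooks.
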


\begin{prop}
	\label{prop:5.5}
	The convergents $\{p_{n}/q_{n}\}$ of $x$ satisfy the inequalities
	\begin{equation} \nonumber
	|q_{0}x-p_{0}| > |q_{1} x - p_{1}|> \dots > |q_{k} x -p_{k}|> \dots
	\end{equation}
	\end{prop}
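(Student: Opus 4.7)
The plan is to deduce Proposition 5.5 directly from the best rational approximation characterisation in Proposition 5.4 together with the elementary properties of convergents already collected in Propositions 5.1 and 5.2. Fix $n \ge 0$ and apply Proposition 5.4 to the convergent $p_{n+1}/q_{n+1}$: it is a best rational approximation of $x$, which by definition means
\begin{equation} \nonumber
|m - nx| > |p_{n+1} - q_{n+1} x| \qquad \forall\, m,n \in \ZZZ \text{ with } 0 < |n| \le q_{n+1} \text{ and } m/n \ne p_{n+1}/q_{n+1} .
\end{equation}
The strategy is simply to feed into this inequality the test pair $(m,n) = (p_{n}, q_{n})$ and obtain the desired bound.

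First I would check that the test pair is admissible. The positivity $q_n > 0$ is immediate from Proposition 5.1, while the inequality $q_n \le q_{n+1}$ follows from Proposition 5.2(1) for $n \ge 1$; the case $n=0$ has to be verified directly, since $q_0=1$ and $q_1=a_1\ge 1$ in any simple continued fraction, so the bound still holds. The fact that $p_n/q_n \ne p_{n+1}/q_{n+1}$ is guaranteed by Proposition 5.2(3), which gives $p_{n+1} q_n - p_n q_{n+1} = (-1)^n \ne 0$; in particular the two reduced fractions cannot coincide. Hence the application of the best-approximation inequality to $(p_n, q_n)$ is legitimate and yields
\begin{equation} \nonumber
|q_n x - p_n| > |q_{n+1} x - p_{n+1}| .
\end{equation}
Since $n \ge 0$ was arbitrary, iterating this for $n=0,1,2,\dots$ produces the chain of strict inequalities asserted in Proposition 5.5.

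I do not expect any serious obstacle in this argument: the entire content of the proposition is essentially encoded in Proposition 5.4, and the remaining work is to check admissibility of the test pair. The only mildly delicate point is the very first step $n=0$, where $q_0=q_1=1$ can occur (when $a_1=1$); there one must observe that the definition of best approximation uses the weak inequality $|n|\le q$, so taking $n=1=q_1$ is still permitted, and $p_0/q_0=a_0$ differs from $p_1/q_1=a_0+1$ so the exclusion clause $m/n \ne p/q$ is respected. Once this boundary case is settled, the proof is complete.
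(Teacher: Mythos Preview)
Your argument is correct. The paper does not give its own proof of Proposition 5.5: Section 5 is explicitly a review of standard continued-fraction facts, with the blanket reference ``we refer to refs.~[HW,K,S] for details and proofs'', so there is nothing to compare against at the level of method.

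Two small remarks. First, you overload the letter $n$: it is simultaneously the index of the convergent and the integer variable in the best-approximation definition. The intent is clear, but for hygiene you should rename one of them. Second, your deduction is logically a bit circular in spirit if one reads the standard references: in most treatments Proposition 5.5 (the monotone decrease of $|q_k x - p_k|$, usually proved directly from the recursion and the bounds in Proposition 5.3) is established \emph{before} the best-approximation characterisation of Proposition 5.4, and is in fact an ingredient in proving the latter. Within the paper's internal logic, however, Proposition 5.4 is stated earlier and without proof, so your derivation of 5.5 from 5.4 is legitimate as written.
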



We want to use the theory of continued fractions in order to deal with the small divisors problem. 
To this end, we shall restrict ourselves to two-dimensional frequency vectors $\oo$;
besides, without any loss of generality, we assume $\oo$ to be the form $\oo:=(1,\alpha) \in \RRR^{2}$,
with $\alpha \in \RRR\setminus \QQQ$ (to comply with Hypothesis \ref{hyp:2}).

\begin{prop} \label{prop:5.6}
Let $\{p_{n}/q_{n}\}$ be the convergents of the
{\color{black}
simple}
continued fraction representing $\al$.
For all $\nn:=(\nu_{1}, \nu_{2})\in \ZZZ^{2}\setminus \{\vzero\}$
such that $0<|\nu_{2}|< q_{n}$ and $|\nu_{2}|\ne q_{n-1}$, one has 
\begin{equation} \nonumber
|\oo\cdot\nn|= |\nu_1 + \nu_{2} \alpha| > |\alpha q_{n-1} - p_{n-1}| > \frac{1}{2q_{n}} \quad \forall n \ge 1 .
\end{equation}
\end{prop}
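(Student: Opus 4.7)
The plan is to split the two inequalities and treat them with different tools: an Ostrowski-type change of basis for the first, and Proposition \ref{prop:5.3} for the second.

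First I would observe that $|\nu_1 + \nu_2 \alpha|$ is invariant under $(\nu_1,\nu_2) \mapsto (-\nu_1,-\nu_2)$, so one may assume $\nu_2 > 0$. By Proposition \ref{prop:5.2}(3) we have $p_{n-1}q_n - p_n q_{n-1} = \pm 1$, hence $\{(p_{n-1}, q_{n-1}), (p_n, q_n)\}$ is a $\ZZZ$-basis of $\ZZZ^2$, and one can write uniquely $(-\nu_1,\nu_2) = a(p_{n-1},q_{n-1}) + b(p_n,q_n)$ for some $a,b \in \ZZZ$. Setting $A := q_{n-1}\alpha - p_{n-1}$ and $B := q_n\alpha - p_n$, this yields
\begin{equation} \nonumber
\nu_2 \alpha + \nu_1 = aA + bB , \qquad \nu_2 = a q_{n-1} + b q_n .
\end{equation}

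The core of the argument is a case analysis on $(a,b)$, which uses that $A$ and $B$ have opposite signs -- a consequence of Proposition \ref{prop:5.2}(4)-(6), according to which the convergents strictly alternate around $\alpha$. If $a=0$ then $\nu_2 = b q_n$ is a nonzero multiple of $q_n$, contradicting $0<\nu_2<q_n$. If $b=0$ then $\nu_2 = a q_{n-1}$, and the hypotheses $\nu_2<q_n$ and $\nu_2\ne q_{n-1}$ force $|a|\ge 2$, giving $|\nu_2\alpha+\nu_1| = |a||A| \ge 2|A| > |A|$. If $a,b$ are both positive then $\nu_2 \ge q_{n-1}+q_n > q_n$, while if both are negative then $\nu_2<0$; both cases contradict $0<\nu_2<q_n$. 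Finally, if $a,b$ have opposite signs then $aA$ and $bB$ have the same sign, so $|\nu_2\alpha+\nu_1| = |a||A| + |b||B| \ge |A| + |B| > |A|$. In every admissible case one obtains $|\nu_1+\nu_2\alpha| > |A| = |\alpha q_{n-1} - p_{n-1}|$, which is the first inequality. The one delicate point is that the hypothesis $|\nu_2|\ne q_{n-1}$ is exactly what excludes the equality case $(a,b)=(\pm 1, 0)$.

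For the second inequality, Proposition \ref{prop:5.3} applied at index $n-1$ gives $|\alpha q_{n-1} - p_{n-1}| > 1/(q_{n-1}+q_n)$. Since $q_{n-1} \le q_n$ by Proposition \ref{prop:5.2}(1) (together with the base relation $q_0=1 \le q_1 = a_1$), one has $1/(q_{n-1}+q_n) \ge 1/(2q_n)$, and combining the two bounds yields $|\alpha q_{n-1} - p_{n-1}| > 1/(2q_n)$, as required. No step poses a real obstacle; the main conceptual input is the change of basis, after which the rest is a routine combinatorial check.
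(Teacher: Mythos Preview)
Your proof is correct. For the second inequality you do exactly what the paper does: invoke Proposition~\ref{prop:5.3} at index $n-1$ and then use $q_{n-1}\le q_n$ from Proposition~\ref{prop:5.2}(1).

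For the first inequality the routes differ. The paper simply cites Proposition~\ref{prop:5.4} (convergents are precisely the best rational approximations), leaving to the reader the standard deduction that $|q\alpha-p|\ge |q_{n-1}\alpha-p_{n-1}|$ whenever $0<q<q_n$, with strict inequality unless $q=q_{n-1}$. You instead give a self-contained argument via the unimodular basis $\{(p_{n-1},q_{n-1}),(p_n,q_n)\}$ of $\ZZZ^2$ and a short case analysis on the coefficients $(a,b)$, using only the alternation of signs of $q_k\alpha-p_k$. This is essentially the Ostrowski proof of the best-approximation theorem specialised to the situation at hand; it has the advantage of being fully explicit and of making transparent exactly where the hypothesis $|\nu_2|\ne q_{n-1}$ is used (to rule out $(a,b)=(1,0)$), whereas the paper's one-line citation requires the reader to combine Proposition~\ref{prop:5.4} with Proposition~\ref{prop:5.5} to cover the range $q_{n-1}<|\nu_2|<q_n$. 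Either approach is fine; yours is longer but leaves nothing to unpack.
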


\begin{proof}
The first inequality follows from proposition \ref{prop:5.4}, while the second one follows from proposition \ref{prop:5.3} and property 1
in proposition \ref{prop:5.2}.
\end{proof}

\zerarcounters 
\section{Tree formalism}
\label{sec:5}

A \textit{rooted tree} $\vartheta$ is an acyclic planar graph such that all the lines are oriented toward a unique
point, that we call the \emph{root}, which has only one incident line, the \emph{root line}.
All the points in $\vartheta$, except the root, are called \textit{nodes}.
The orientation of the lines in $\vartheta$ induces a partial ordering 
relation ($\preceq$) between the nodes. Given two nodes $v$ and $w$,
we shall write $w \prec v$ every time $v$ is along the path
(of lines) which connects $w$ to the root; we shall write $w\prec \ell$ if
$w\preceq v$, where  $v$ is the unique node that the line $\ell$ enters -- see Figure \ref{fig1}. 
For any node $v$, denote by $p_{v}$ the number of lines entering $v$.
Given a rooted tree $\vartheta$, we call \emph{first node} the node the root line exits and denote by $N(\vartheta)$ the set of nodes,
by $E(\vartheta)$ the set of \textit{end nodes}, i.e. nodes $v$ with $p_{v}=0$,
by $V(\vartheta)=N(\vartheta) \setminus E(\vartheta)$ the set of \textit{internal nodes}
and by $L(\vartheta)$ the set of lines. We impose the constraint $p_{v}\ge \gotn,$ $\forall v\in V(\vartheta)$.
We define the \textit{order} of $\vartheta$ as $k \equiv k(\vartheta):=|N(\vartheta)|$,
where $|A|$ denotes the cardinality of the set $A$. We refer to refs.~\cite{Bo,HP} for an introduction to graph theory
and to ref.~\cite{G3} for an overview on the tree formalism. 

\begin{figure}[H]
	\centering
	\ins{155pt}{-091pt}{$v_{0}$}
	\ins{219pt}{-060pt}{$v_{1}$}
	\ins{230pt}{-090pt}{$v_{2}$}
	\ins{219pt}{-123pt}{$v_{3}$}
	\ins{270pt}{-009pt}{$v_{4}$}
	\ins{286pt}{-029pt}{$v_{5}$}
	\ins{300pt}{-60pt}{$v_{6}$}
	\ins{285pt}{-091pt}{$v_{7}$}
	\ins{266pt}{-110pt}{$v_{8}$}
	\ins{350pt}{-060pt}{$v_{9}$}
	\ins{360pt}{-091pt}{$v_{10}$}
	\ins{345pt}{-123pt}{$v_{11}$}
	\subfigure{\includegraphics[scale=0.7]%
		{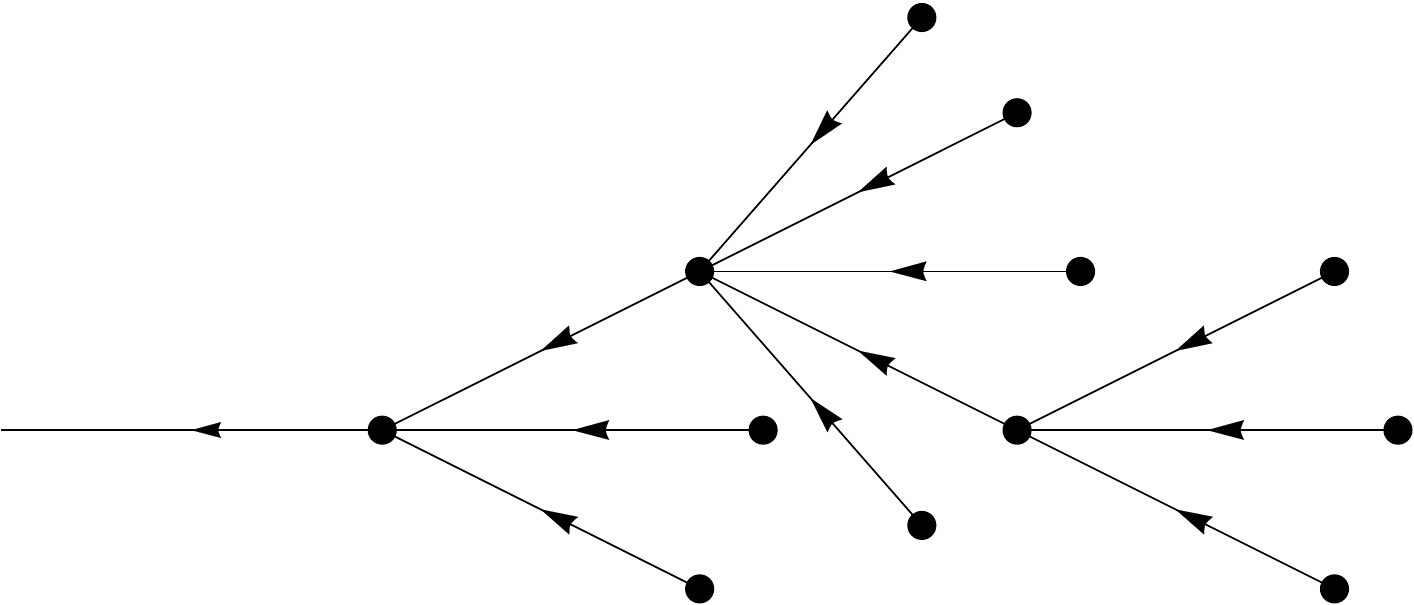}}
	\caption{A tree with 10 vertices, 9 end nodes and 3 internal nodes, for $\gotn =3$.}
	\label{fig1}
\end{figure}

For any node $w\in N(\vartheta)$ the line $\ell$ exiting $w$ can be considered as the root line
of a tree $\vartheta_w$ formed by the nodes $v \in N(\vartheta)$ such that $v\preceq w$, by the lines which connect such nodes and
by $\ell$ itself; such a tree is called the \emph{subtree} of $\vartheta$ with first node $w$.

We associate with each end node $v\in E(\vartheta)$ a \textit{mode} label $\nn_{v}\in\ZZZ^{d}$ and
we split $E(\vartheta)$ in two complementary sets:
$E_{0}(\vartheta)=\{ v \in E(\vartheta) : \nn_{v} = \vzero \}$ and
$E_{1}(\vartheta)=\{ v \in E(\vartheta) : \nn_{v} \neq \vzero \}$, in such a way that $E(\vartheta)=E_{0}(\vartheta) \sqcup E_{1}(\vartheta)$.
With each line $\ell\in L(\vartheta)$ we associate
a \textit{momentum} $\nn_{\ell} \in \ZZZ^{d}$ with the constraint (\emph{conservation law})
\begin{equation} 
\label{eq:6.1}
\nn_{\ell}=\sum_{\substack{w\in E_{1}(\vartheta) \\ w \prec \ell}} \nn_{w},
\end{equation}
i.e.~the momentum of the line $\ell$ is the sum of the mode labels associated with the end nodes preceding $\ell$. 
Equivalently, the momentum of the line $\ell$ which exits a node $v$ is the mode label of $v$ if $v$ is an end node,
and is the sum of the momenta of the lines entering $v$ if $v$ is an internal node.
We split the set $L(\vartheta)$ in two disjoint sets, $L_{0}(\vartheta):=\{\ell \in L(\vartheta): \nn_{\ell}=0\}$
and $ L_{1}(\vartheta):=\{\ell \in L(\vartheta): \nn_{\ell} \ne 0\} = L(\vartheta)\setminus L_{0}(\vartheta)$.

With each line $\ell \in L(\vartheta)$ we also assign a \emph{scale label} $\tilde{n}_{\ell} \in \{0,1\}$.
More generally one can consider the scale label as a number in $\ZZZ$ (we refer to ref.~\cite{G3} for a general overview on multiscale analysis),
but, for the problem we are studying, we just need the scale label to be either zero or one.   

\begin{defi}[Labelled trees]
\label{def:6.1}
A \emph{labelled rooted tree} is a rooted tree with the labels associated with $N(\vartheta)$ and $L(\vartheta)$. 
\end{defi}

\begin{defi}[Equivalent trees]
\label{def:6.2}
We call \emph{equivalent}  two labelled rooted trees which can be transformed into each othe
by continuously deforming the lines in such a way that they do not cross each other. 
\end{defi}

In fact, we shall consider only nonequivalent labelled rooted trees. We denote by
$\TT_{k,\nn}$  the set of nonequivalent trees of order $k$ and momentum $\nn$ associated with the root line.

\begin{defi}[Cluster]
\label{def:6.3}
A \emph{cluster} $T$ on scale $\tilde{n}$ 
is a maximal set of nodes and lines connecting them such that all the lines have scales $\tilde{n}'\le \tilde{n}$
and there is at least one line with scale $\tilde{n}$. The lines entering the cluster $T$ and the possible unique line coming out from it
(if existing at all) are called the \emph{external lines} of the cluster $T$.
\end{defi}

We denote by $V(T)$, $E(T)$ and $L(T)$ the set of internal nodes, of end nodes and of lines of the cluster $T$ respectively,
with the convention that the external lines of $T$ do not belong to $L(T)$. 
In particular we denote with $E_{1}(T)$ and $E_{0}(T)$ the following sets: 
$E_{1}(T):= \{v \in E(T) : \nn_{v}\ne \vzero \}$ and $E_{0}(T):= E(T) \setminus E_{1}(T)$.

In fact, since thare are only two possible scales,
we will only deal with clusters on scale $0$, because the only possible cluster on scale $1$
is the whole tree $\TT_{k, \nn}$ where there is at least one line on scale $1$. 

\begin{defi}[Self-energy cluster]
\label{def:6.4}
A \emph{self-energy cluster} is a cluster $T$ (on scale $0$) such that $T$ has only one entering line which
has the same momentum of exiting line. 
We denote by $\gotR_{0}$ the set of  self-energy clusters.
\end{defi}

According to the definition \ref{def:6.4}, the mode labels associated with the end nodes in a self-energy cluster $T$ are such that 
\[
\sum_{v \in E(T)} \nn_{v} = \vzero.
\]
%

\begin{defi}[Renormalised tree]
\label{def:6.5}
A \emph{renormalised tree} is a tree which does not contain any self-energy clusters.
We denote by $\gotT_{k,\nn}$ the set of renormalised trees.
\end{defi}

Let us introduce a \emph{sharp partition of unity}:  given a positive constant $C_1$ (to be chosen later),
let $\chi$ and $\Psi$ be functions defined on $\RRR_{+}$, such that 
\begin{equation}
\label{eq:6.2}
\chi(x):=
\begin{cases}
1 &\text{for}\,\, x< \displaystyle{\frac{C_{1}}{4}} \e^{\frac{1}{\gotn+1}},\\ \vspace{-.4cm} \\
0 &\text{for}\,\, x\ge \displaystyle{\frac{C_{1}}{4}} \e^{\frac{1}{\gotn+1}},
\end{cases}
\qquad 
\Psi(x):=
\begin{cases}
1 &\text{for}\,\, x\ge \displaystyle{\frac{C_{1}}{4}} \e^{\frac{1}{\gotn+1}},\\ \vspace{-.4cm} \\
0 &\text{for}\,\, x < \displaystyle{\frac{C_{1}}{4}} \e^{\frac{1}{\gotn+1}}.
\end{cases}
\end{equation}
%

We associate with each node $v\in N(\vartheta)$  a \textit{node factor}
\begin{equation} 
\label{eq:6.3} 
F_{v} := \begin{cases}
- \e \, g_{p_{v}},  &  v \in V(\vartheta),  \\
\e \, f_{\nn_{v}},  & v \in E_{1}(\vartheta),  \\
\zeta, & v \in E_{0}(\vartheta), 
\end{cases}
\end{equation}
and with each line $\ell\in L(\vartheta)$ a \textit{propagator}
%
$G_{\ell} \equiv G^{[\tilde{n}_{\ell}]}(\oo \cdot \nn_{\ell}; \e, \zeta, c)$,
%
where the functions $G^{[\tilde{n}_{\ell}]}(\oo \cdot \nn_{\ell}; \e, \zeta, c)$ are defined as follows. 
For $\nn_{\ell}\ne\vzero$, set
\begin{subequations} \label{eq:6.4}
\begin{align}
\label{eq:6.4a}
G^{[0]}(\oo \cdot \nn_{\ell}; \e, \zeta, c) &:= \frac{\Psi(|\oo \cdot \nn_{\ell}|)}{\ii \oo \cdot \nn_{\ell} (1 + \ii\e \oo \cdot \nn_{\ell})},\\
\label{eq:6.4b}
G^{[1]}(\oo \cdot \nn_{\ell}; \e, \zeta, c)&:= \frac{\chi(|\oo \cdot \nn_{\ell}|)}{\ii \oo \cdot \nn_{\ell} (1 + \ii\e \oo \cdot \nn_{\ell}) - \mathcal{M}(\oo \cdot \nn_{\ell}; \e, \zeta, c)},
\end{align}
\end{subequations}
with 
\begin{subequations} \label{eq:6.5}
	\begin{align}
	\label{eq:6.5a}
	\mathcal{M}(\oo \cdot \nn_{\ell}; \e, \zeta, c) &:= \chi(|\oo \cdot \nn_{\ell}|) M(\oo \cdot \nn_{\ell}; \e, \zeta, c),\\
	\label{eq:6.5b}
	M(\oo \cdot \nn_{\ell}; \e, \zeta, c) &:= \sum_{T \in \gotR_{0}} \Val(T, \oo \cdot \nn_{\ell}; \e, \zeta, c),\\
	\label{eq:6.5c}
	\Val(T, \oo \cdot \nn;\e, \zeta, c) &:= \biggr(\prod_{\ell \in L(T)} G_{\ell}\biggl) \biggr(\prod_{v \in V(T)} F_{v}\biggl),
	\end{align}
\end{subequations}
where $\Val(T, \oo \cdot \nn_{\ell}; \e, \zeta, c)$ is called the \emph{value of the self-energy cluster} $T$.
For $\nn_{\ell} = \vzero$, we assign to $\ell$ the scale $0$ only, and set $G_{\ell}=G^{[0]}(0;\e,\zeta,c)=1$.	
%
Note that, with the sharp partition considered above, for any line $\ell\in L(\vartheta)$
the momentum $\nn_{\ell}$ identifies uniquely the scale $\tilde{n}_{\ell}$.

In order to simplify the notation we omit the dependence of the parameters $\e,\zeta, c$;
 hence, from now on,  we will write 
$G^{[0]}(\oo\cdot\nn)$, $G^{[1]}(\oo\cdot\nn),\, \mathcal{M}(\oo \cdot \nn),\, M(\oo \cdot \nn),\, \Val(T, \oo \cdot \nn)$.

\begin{rmk}
\label{rmk:6.6}
\emph{
Since only cluster on scale $0$ are considered, the product over the lines in \eqref{eq:6.7c} involves
only propagators on scale $0$, so that
the expressions of $\mathcal{M}(\oo \cdot \nn)$ in \eqref{eq:6.5a} and of $\Val(T,\oo\cdot\nn)$ in \eqref{eq:6.5c} are well-defined.
}
\end{rmk}

If 
$I_n(C_1,C_0)$ is defined as in \eqref{eq:2.3},
the condition $\e \in I_{n}(C_{1}, C_{0})$ can be rewritten as 
\begin{equation} \label{eq:6.6}
\frac{1}{C_{0}}\log\frac{1}{\e} < q_{n} < \frac{1}{C_{1}\e^{\frac{1}{\gotn+1}}} . 
\end{equation}

Let $\e$ satisfy \eqref{eq:6.6}. Consider a line $\ell \in L_{1}(\vartheta)$: if $|\nn_{\ell}|<q_{n}$, then also $|\nu_{\ell,2}|< q_{n}$,
so that, by Proposition \ref{prop:5.6} and by \eqref{eq:6.2} and \eqref{eq:6.6}, the line $\ell$ has to be on scale $0$.
\emph{Vice versa} if $\ell$ is on scale 1, then $|\nn|\ge q_{n}$.
%
Therefore, for $\e$ satisfying \eqref{eq:6.6},  we have three different possibilities for a line $\ell \in L_{1}(\vartheta)$:
	\vspace{-.2cm}
	\begin{enumerate}
\itemsep0em
	\item $|\nn_{\ell}|< q_{n}$ (this automatically implies scale $0$),
	\item $|\nn_{\ell}|\ge q_{n}$ and scale   $\tilde{n}_{\ell}=0$,
	\item $|\nn_{\ell}|\ge q_{n}$ and scale   $\tilde{n}_{\ell}=1$.
\end{enumerate}

In the first two cases, one has $G_{\ell}=G^{[0]}(\oo\cdot\nn_{\ell})$,
while in the last case one has $G_{\ell}=G^{[1]}(\oo\cdot\nn_{\ell})$.
We define $L_{<,0}(\vartheta),\, L_{\ge,0}(\vartheta),\, L_{\ge,1}(\vartheta)\,$ as follows:
\begin{subequations} \label{eq:6.7}
\begin{align}
L_{<,0}(\vartheta) & := \{ \ell \in L_{1}(\vartheta) : |\nn_{\ell}|< q_{n}\} , 
\label{eq:6.7a} \\
L_{\ge,0}(\vartheta) & := \{ \ell \in L_{1}(\vartheta) : |\nn_{\ell}| \ge q_{n} \,\,\text{and}\,\, \tilde{n}_{\ell}=0\} ,
\label{eq:6.7b} \\
L_{\ge,1}(\vartheta) & := \{ \ell \in L_{1}(\vartheta) : |\nn_{\ell}| \ge q_{n} \,\,\text{and}\,\, \tilde{n}_{\ell}=1\} .
\label{eq:6.7c}
\end{align}
\end{subequations}
By construction, one has $L_{1}(\vartheta)= L_{<,0}(\vartheta) \sqcup L_{\ge,0}(\vartheta) \sqcup L_{\ge,1}(\vartheta)$.

The \emph{value of the renormalised tree} $\vartheta$ is defined as 
\begin{equation}
\label{eq:6.8}
\Val(\vartheta)\equiv \Val(\vartheta; \e, \zeta, c):= \biggr(\prod_{\ell \in L(\vartheta)} G_{\ell}\biggl) \biggr(\prod_{v \in V(\vartheta)} F_{v}\biggl) .
\end{equation}	

Finally set 
\begin{equation}
\label{eq:6.9}
u^{[k]}_{\nn} := \sum_{\vartheta \in \gotT_{k,\nn}} \Val(\vartheta), \qquad \nn \in \ZZZ^d ,
\end{equation}
where $\gotT_{k, \nn}$ denotes the set of all renormalised trees of order $k$ and momentum $\nn$ associated with the root line,
and define the \emph{renormalised series} as 
\begin{equation}
\label{eq:6.10}
\bar{u}(\pps) \equiv \bar{u}(\pps; \e, \zeta, c):= \sum_{\nn \in Z^{2}_{*}}
{\rm e}^{\ii \nn \cdot \pps} \bar{u}_{\nn}, \quad \bar{u}_{\nn} :=\sum_{k=1}^{\infty} u^{[k]}_{\nn} , 
\vspace{-.2cm}
\end{equation}	
where, once more, the dependence on $\e, c$ and $\zeta$ of the coefficients $u_{\nn}^{[k]}$ is omitted.

\zerarcounters
\section{Bounds on the values of the self-energy clusters}
\label{sec:7}

Define
\begin{equation}
\label{eq:7.1}
\eta:=\max\{\e, |\zeta|\}.
\end{equation}
%
\begin{lemma}
	\label{lem:7.1}
	For any self-energy cluster $T$ one has
	\begin{equation} \nonumber
	|\Val(T,\oo\cdot\nn)|\le \Gamma \rho \, \bar{C}^{k_{T}} \e \eta^{\frac{\gotn}{\gotn+1} (k_{T} -1)},
	\end{equation}
	where
	\begin{equation}
	\label{eq:7.2}
	\bar{C}:= \rho^{-1} \max\Big\{\frac{4\Phi}{C_{1}}, \frac{4\Gamma}{C_{1}}, 1\Big\},
	\end{equation}
	with $\rho,\, \Gamma$ as in \eqref{eq:2.2} and $\Phi$ as in \eqref{eq:2.1}.
\end{lemma}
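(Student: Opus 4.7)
The plan is to bound $|\Val(T,\oo\cdot\nn)|$ by multiplying together the elementary estimates available for each propagator and each node factor, and then to collect the resulting powers of $\e$ and $\eta$ through a careful combinatorial bookkeeping. First I would note that, for any line $\ell\in L(T)$ with $\nn_\ell\ne\vzero$, the support of $\Psi$ in \eqref{eq:6.2} forces $|\oo\cdot\nn_\ell|\ge (C_1/4)\e^{1/(\gotn+1)}$, hence together with $|1+\ii\e\,\oo\cdot\nn_\ell|\ge 1$ one has $|G_\ell|\le (4/C_1)\e^{-1/(\gotn+1)}$; the remaining lines, with $\nn_\ell=\vzero$, carry $G_\ell=1$ by definition. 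For the node factors, the Cauchy estimate \eqref{eq:2.2} gives $|F_v|\le\e\,\Gamma\rho^{-p_v}$ for $v\in V(T)$, the decay bound \eqref{eq:2.1} gives $|F_v|\le\e\,\Phi$ for $v\in E_1(T)$ (using $e^{-\xi|\nn_v|}\le 1$), and $|F_v|=|\zeta|\le\eta$ for $v\in E_0(T)$.

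Next I would use the combinatorial facts $|L(T)|=k_T-1$ (each node of $T$ except the first one contributes an internal outgoing line) and $\sum_{v\in V(T)}p_v=|L(T)|+1=k_T$ (each internal line, together with the single external entering line, ends at an internal node). The crucial observation is that lines exiting $E_0$-end nodes satisfy $\nn_\ell=\nn_v=\vzero$ and therefore carry the trivial propagator, so at most $|L(T)|-|E_0(T)|=|V(T)|+|E_1(T)|-1$ lines contribute the non-trivial factor $(4/C_1)\e^{-1/(\gotn+1)}$. Setting $m:=|V(T)|+|E_1(T)|$ and multiplying all the bounds together yields
\begin{equation*}
|\Val(T,\oo\cdot\nn)|\le\rho^{-k_T}\,\Gamma^{|V(T)|}\Phi^{|E_1(T)|}\left(\frac{4}{C_1}\right)^{m-1}\e^{\,m-(m-1)/(\gotn+1)}\eta^{|E_0(T)|}.
\end{equation*}

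The last and most delicate step is to convert these powers of $\e$ and $\eta$ into the form stated in the lemma. Since every self-energy cluster has at least one internal node, $m\ge 1$, so the exponent of $\e$ equals $(\gotn m+1)/(\gotn+1)\ge 1$: I would factor out one $\e$ and bound the leftover $\e^{\gotn(m-1)/(\gotn+1)}$ by $\eta^{\gotn(m-1)/(\gotn+1)}$ using $\e\le\eta$; combining with $\eta^{|E_0(T)|}$ and using $|E_0(T)|=k_T-m$ gives $\eta^{(\gotn(k_T-1)+|E_0(T)|)/(\gotn+1)}$, which is $\le\eta^{\gotn(k_T-1)/(\gotn+1)}$ since $\eta\le 1$ and $|E_0(T)|\ge 0$. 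The remaining multiplicative constants can be regrouped as $(C_1/4)(4\Gamma/C_1)^{|V(T)|}(4\Phi/C_1)^{|E_1(T)|}\le (C_1/4)\,M^{k_T}$ with $M:=\max\{4\Phi/C_1,4\Gamma/C_1,1\}$, so that together with $\rho^{-k_T}$ we obtain $(C_1/4)\,\bar{C}^{k_T}\le \Gamma\rho\,\bar{C}^{k_T}$ provided $C_1\le 4\Gamma\rho$, an inequality one may impose thanks to the freedom in the choice of $C_1$ in Theorem~\ref{thm:1}. The main obstacle is precisely this bookkeeping: recognising that $E_0$-lines carry trivial propagators, and then using $|V(T)|\ge 1$ to extract exactly one free factor of $\e$ while producing the correct exponent $\gotn(k_T-1)/(\gotn+1)$ on $\eta$ through the elementary inequality $\e\le\eta$.
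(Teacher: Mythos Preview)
Your proof is correct and follows essentially the same line as the paper: bound each scale-$0$ propagator by $(4/C_1)\e^{-1/(\gotn+1)}$ (the $E_0$-lines carrying the trivial propagator), use the Cauchy and Fourier estimates on the node factors, exploit $|V(T)|\ge 1$ to extract one full factor of $\e$, and then dominate the remaining powers of $\e$ by $\eta$ via $\e\le\eta$. The only cosmetic difference is in the final bookkeeping of constants---the paper pairs each propagator with the node factor of the node it exits, writing $\Gamma\,(4\Gamma/C_1)^{|V(T)|-1}(4\Phi/C_1)^{|E_1(T)|}$ and thereby avoiding your auxiliary constraint $C_1\le 4\Gamma\rho$---but this is immaterial.
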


\begin{proof}
	We recall the definition of the value of a self-energy cluster (see remark \ref{rmk:6.6}), that is
	\[
	\Val(T,\oo\cdot\nn)= \Big(\prod_{v \in V(T)} F_{v}\Big) \Big(\prod_{\ell \in L(T)} G^{[0]}(\oo\cdot\nn_{\ell}) \Big).
	\] 
	Since every line in $T$ is on scale $0$, we bound the propagators as 
	\[
	|G^{[0]}(\oo\cdot\nn_{\ell})| \le \frac{1}{|\oo\cdot\nn_{\ell}|} \le \frac{4}{C_{1}} \e^{-\frac{1}{\gotn+1}}, 
	\]
	if $\nn_{\ell} \ne 0$, while one has
	%
	$G^{[0]}(\oo\cdot\nn_{\ell}) =1$
	%
	if $\nn_{\ell}= \vzero$. 
	Then, by using \eqref{eq:2.1} and \eqref{eq:2.2} to bound the node factors, we have 
	\[
	\begin{split}
	|\Val(T,\oo\cdot\nn)|&\le \Gamma \rho^{-(k_{T}-1)}  \Big(\frac{4\Phi}{C_{1}}\Big)^{|E_{1}(T)|} 
	\Big(\frac{4\Gamma}{C_{1}}\Big)^{|V(T)|-1} 
	\e^{\frac{\gotn}{\gotn+1}( |V(T)| - 1 + |E_{1}(T)|) + 1} \, |\zeta|^{|E_{0}(T)|} \\
	&\le \Gamma \rho \, \bar{C}^{k_{T}} \e \,\eta^{\frac{\gotn}{\gotn+1}(|V(T)| - 1 + |E_{1}(T)|+ |E_{0}(T)|) + \frac{1}{\gotn+1} |E_{0}(T)|}
	\le \Gamma \rho \, \bar{C}^{k_{T}} \e \,\eta^{\frac{\gotn}{\gotn+1}  (k_{T}-1)},
	\end{split}
	\]
	where we have used that $|V(T)|\ge 1$ by construction and defined $\bar{C}$ as in \eqref{eq:7.2}.
\end{proof}

\begin{lemma}
	\label{lem:7.2} 
	Define  $\MM(\oo \cdot \nn)$ as in  \eqref{eq:6.5a}. Then, for $\eta$ small enough, one has 
	\begin{equation} \nonumber
	|\MM(\oo \cdot \nn)| \ge A \e \eta^{\gotn-1},
	\end{equation}
	with  $A$ positive constant  depending on $\Gamma$, $\rho$ and $ \Phi$, 
	with $\Gamma, \rho$ as in \eqref{eq:2.2} and $\Phi$ as in \eqref{eq:2.1}.
\end{lemma}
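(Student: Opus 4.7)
The plan is to resum the contributions of the structurally simplest self-energy clusters -- those with a single internal node -- into a closed-form main term, to lower bound it using Hypothesis \ref{hyp:1}, and to control the remaining multi-node contributions via Lemma \ref{lem:7.1}.

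First I would sum over all self-energy clusters with one internal node $v$ of degree $p_v = p\ge \gotn$. Each end node $w$ contributes either $\zeta$ (if $\nn_w=\vzero$) or $\e f_{\nn_w} G^{[0]}(\oo\cdot\nn_w)$ (if $\nn_w\ne\vzero$); summing over mode configurations with total zero (imposed by the conservation law \eqref{eq:6.1}), over the $p$ possible positions of the external entering line at $v$, and finally over $p\ge\gotn$, one obtains
\begin{equation*}
M_*(\oo\cdot\nn) := -\e \sum_{p \ge \gotn} p\, g_p\, [h^{p-1}]_0 = -\e\, [g'(c+h)]_0 ,
\end{equation*}
where $h(\pps) := \zeta + \sum_{\nn'\ne\vzero} \e f_{\nn'} G^{[0]}(\oo\cdot\nn')\, {\rm e}^{\ii \nn'\cdot\pps}$ is real-valued thanks to the reality of $f$. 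Expanding via Hypothesis \ref{hyp:1}, $[g'(c+h)]_0 = \gotn g_\gotn\, [h^{\gotn-1}]_0 + O([h^\gotn]_0)$; since $\gotn-1$ is even, $[h^{\gotn-1}]_0 = \|h\|^{\gotn-1}_{L^{\gotn-1}} \ge \|h\|^{\gotn-1}_{L^2} = \bigl(\zeta^2 + \|h-\zeta\|^2_{L^2}\bigr)^{(\gotn-1)/2}$. One has $\|h-\zeta\|^2_{L^2}\ge c\, \e^2$ for $\e$ small, by picking some fixed $\nn^*\ne\vzero$ with $|\oo\cdot\nn^*|>0$ and $|f_{\nn^*}|>0$ (so that the cutoff $\Psi$ does not suppress this mode). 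Combining, $[h^{\gotn-1}]_0\ge c\,\eta^{\gotn-1}$, whence $|M_*(\oo\cdot\nn)| \ge c_0\,\e\,\eta^{\gotn-1}$ for $\eta$ small.

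Next I would bound the remainder $R:=M-M_*$, which collects the contributions of multi-node self-energy clusters. Since each internal node has degree $\ge\gotn$, such clusters have $k_T \ge 2\gotn$, and Lemma \ref{lem:7.1} together with a standard counting estimate $|\{T\in\gotR_0: k_T=k\}|\le D^k$ for labelled planar rooted trees produces a convergent geometric series whose dominant term is of order $\e\,\eta^{\gotn(2\gotn-1)/(\gotn+1)}$. Since
\begin{equation*}
\frac{\gotn(2\gotn-1)}{\gotn+1} - (\gotn-1) = \frac{\gotn^2-\gotn+1}{\gotn+1} > 0,
\end{equation*}
the remainder is strictly subleading, and one concludes $|M(\oo\cdot\nn)|\ge |M_*|-|R|\ge A\,\e\,\eta^{\gotn-1}$ for $\eta$ small, with $A>0$ depending on $\Gamma,\rho,\Phi$ (and on $|g_\gotn|$).

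The main obstacle is the lower bound $[h^{\gotn-1}]_0 \ge c\,\eta^{\gotn-1}$ in the regime $|\zeta|<\e$ (so $\eta=\e$): the bound must then come from the non-zero Fourier modes of $h$, i.e.~from $f$ propagating through $G^{[0]}$. A subtler point is that resumming \emph{all} single-node clusters into $-\e[g'(c+h)]_0$ is essential: restricting instead to those with all end nodes in $E_0(T)$ would give only $-\e g'(c+\zeta)$, which vanishes as $\zeta\to 0$ and would leave single-node contributions with non-zero mode end nodes to be handled as a remainder -- whereas Lemma \ref{lem:7.1} is not strong enough to treat them as subleading in that regime.
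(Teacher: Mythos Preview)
Your proof is correct and follows the same overall strategy as the paper --- isolate a leading contribution and control the remainder via Lemma~\ref{lem:7.1} --- but differs in two genuine ways that are worth noting.

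First, the decomposition is different. The paper singles out $\MM_{\gotn}(\oo\cdot\nn)$, the contribution of self-energy clusters with exactly $\gotn$ nodes (one internal node of degree exactly $\gotn$), and treats everything with $k_T\ge\gotn+1$ as remainder $\Delta\MM$, bounded by Lemma~\ref{lem:7.1} as $O(\e\eta^{\gotn^2/(\gotn+1)})$. You instead resum \emph{all} single-internal-node clusters (arbitrary degree $p\ge\gotn$) into the closed form $M_*=-\e[g'(c+h)]_0$, leaving only multi-internal-node clusters in the remainder; these have $k_T\ge 2\gotn$, so Lemma~\ref{lem:7.1} gives the sharper $O(\e\eta^{\gotn(2\gotn-1)/(\gotn+1)})$. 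Your resummation has the advantage of making the connection to $g'$ explicit, and your remainder is more comfortably subleading.

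Second, your lower bound on the main term is cleaner. The paper proceeds by a case analysis on the relative sizes of $\zeta$ and $\e$ (the three cases $\zeta=o(\e)$, $\e=o(\zeta)$, $\zeta=a\e+o(\e)$). You bypass this entirely via
\[
[h^{\gotn-1}]_0=\|h\|_{L^{\gotn-1}}^{\gotn-1}\ge\|h\|_{L^2}^{\gotn-1}
=(\zeta^2+\|h-\zeta\|_{L^2}^2)^{(\gotn-1)/2}\ge c\,\eta^{\gotn-1},
\]
using that $\gotn-1$ is even, the power-mean inequality on $\TTT^2$, and Parseval. This is more transparent and gives the constant more explicitly. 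Both arguments implicitly require $f$ to have at least one non-zero Fourier mode (otherwise the problem trivialises with $x\equiv c$ a solution); neither the paper nor you states this explicitly, but it is harmless.
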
 

\begin{proof}
		A cluster $T$ must contain at least $\gotn$ nodes, i.e. $k_{T}\ge \gotn$. Indeed let us consider a tree with order $k\ge \gotn +1$, in which the root line, $\ell_{0}$, exits a node $v_{0}\in V(\vartheta)$. By construction $p_{v_{0}}\ge \gotn$, therefore a cluster $T$, if exists, must contain at least $\gotn -1$ lines on scale $0$ entering $v_{0}$  and hence $\gotn -1$ nodes besides $v_{0}$.  
		
		Moreover , if a self-energy cluster has only $\gotn$ nodes, then $\gotn -1$ of such nodes
		are in $E(\vartheta)$ and the external lines of the cluster exit/enter the same node -- see Figure \ref{figselfenergy}.
		
		\begin{figure}[h]
			\centering
			\includegraphics[scale=0.50]%
			{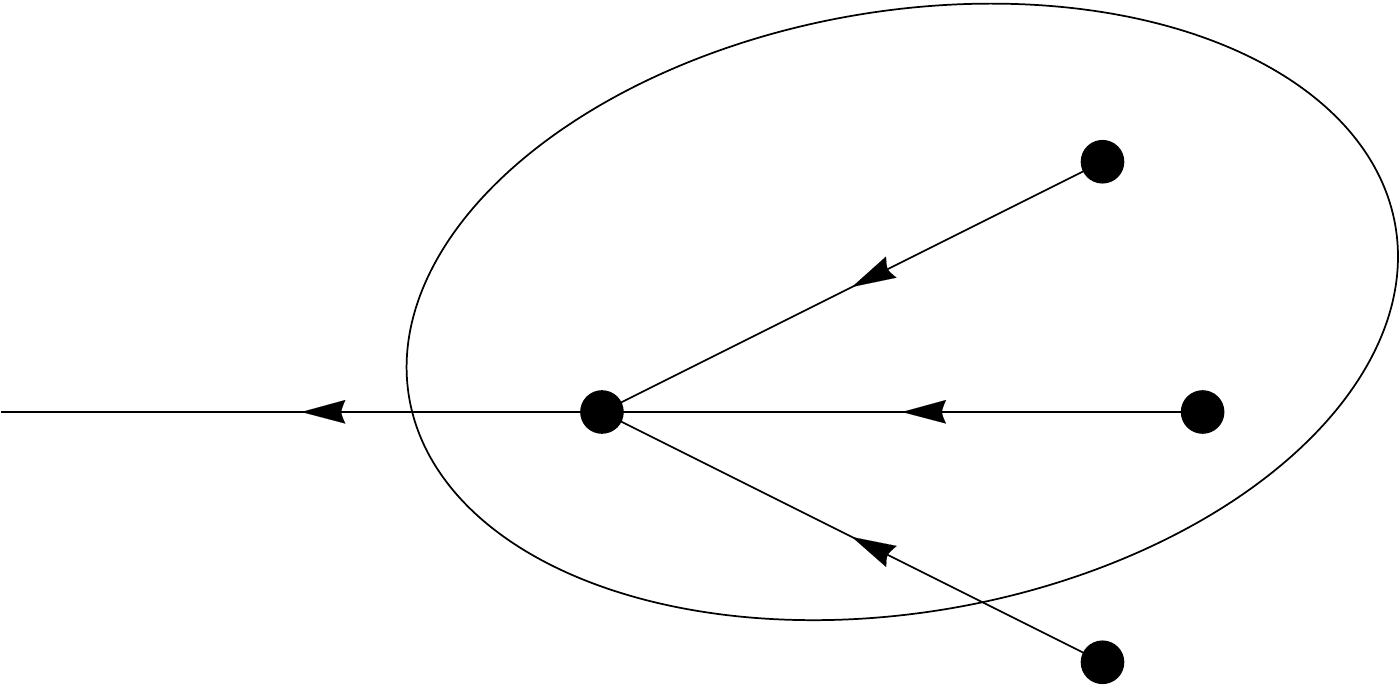}
			\caption{A self-energy with $3$ nodes, for $\gotn = 3$.}  
			\label{figselfenergy}
		\end{figure}

Denote by $\MM_{\gotn}(\oo\cdot\nn)$ the terms of $\MM(\oo\cdot\nn)$ corresponding to  self-energy clusters with $\gotn$ nodes and
by $\Delta\MM(\oo\cdot\nn)$ the sum of all the other terms contributing to $\MM(\oo\cdot\nn)$.
Notice that $\MM_{\gotn}(\oo\cdot\nn)=\MM_{\gotn}(0)$, i.e.~$\MM_{\gotn}(\oo\cdot\nn)$ does not depend on $\nn$ and is real. 
		Hence we have
		\begin{equation}
		\label{eq:7.3}
		|\MM_{\gotn}(\oo\cdot\nn)| \ge A_{0} \e \bigl[(\bar u^{[1]} +\zeta)^{\gotn-1}\bigr]_{\vzero}, \qquad
		\bar u^{[1]}(\psi) :=  \e \sum_{\nn \in \ZZZ^2_*} {\rm e}^{i\nn\cdot\pps} f_{\nn} \, G^{[0]}(\oo\cdot\nn) ,
		\vspace{-.2cm}
		\end{equation}
		for a suitable positive constant $A_{0}$ depending on $\Phi, \Gamma, \rho, \xi$. 
		
		If $\zeta= o(\e)$, then one has
		%
		$\bigl[(u^{[1]} +\zeta)^{\gotn-1}\bigr]_{\vzero} 
		\ge A_{1} \e^{\gotn-1}$,
		%
		for a suitable positive constant $A_{1}$ depending on $\Phi, \xi$;
		analogously, if $\e=o(\zeta)$, then
		%
		$\bigl[(u^{[1]} +\zeta)^{\gotn-1}\bigr]_{\vzero} 
		\ge A_{2}\zeta^{\gotn-1}$,
		%
		for a suitable positive constant $A_{2}$, 
		finally, if $\zeta= a \e + o(\e)$, with $a \ne 0$, then, if one sets $\bar u^{[1]} = \e X^{[1]}$, one has
		\[
		\begin{split}
		\bigl[(u^{[1]} +\zeta)^{\gotn-1}\bigr]_{\vzero} &= \bigl[(u^{[1]} + a\e +o(\e))^{\gotn-1}\bigr]_{\vzero} 
		\ge [(X^{[1]} +a)^{\gotn-1}]_{\vzero}\, \e^{\gotn-1} +o(\e^{\gotn-1}) 
		\ge A_{3}\, \e^{\gotn-1},
		\end{split}
		\]
		with $A_{3}$ a positive constant. 
		In conclusion, one obtains
		\begin{equation}
		\label{eq:7.4}
		|\MM_{\gotn}(\oo\cdot\nn)| \ge \tilde{A}_{0} \e \eta^{\gotn-1},
		\end{equation}
		for a suitable positive constant $\tilde{A}_{0}$ depending on $A_{0}$, $A_{1}$, $A_{2}$, $A_{3}$. 
		
		Using \eqref{eq:7.4} and Lemma \ref{lem:7.1} to bound $\Delta\MM(\oo\cdot\nn)$, one finds, for $\eta$ small enough,
		%
		%
		\[
		\begin{split}
		|\MM(\oo\cdot\nn)| &\ge |\MM_{\gotn}(\oo\cdot\nn)| - |\Delta\MM(\oo\cdot\nn)| 
		\ge \tilde{a}_{0} \e \eta^{\gotn-1} - \tilde{a}_{1} \e \eta^{\frac{\gotn^{2}}{\gotn+1}} 
		\ge A \e \eta^{\gotn -1} 
		\end{split}
		\]
		for suitable positive constants $\tilde{a}_{0}$ and $\tilde{a}_{1}$ depending on $\tilde{A}_{0}$, $\rho$
		and $\bar{C}$ and $A=\tilde{a}_{0}/2$, provided $\eta$ is small enough. 
\end{proof}

\zerarcounters
\section{The range equation}
\label{sec:8}

The goal of the present section is to prove that the series  $\bar{u}(\pps; \e, \zeta, c)$
in \eqref{eq:6.10} converges and solves \eqref{eq:4.2}, i.e. 
\begin{equation}
\label{eq:8.1}
\ii \oo \cdot \nn (1 + \ii \oo \cdot \nn)\bar{u}_{\nn}=  \e \, [f - g(c + \zeta + \bar{u})]_{\nn}, 
\end{equation}	
for all $\nn\ne0$ and for all $\zeta$ small enough, independently of $\e$.

To prove the convergence of the series, we have to provide a bound for the coefficients $u_{\nn}^{[k]}$ in \eqref{eq:6.9} and hence
we need an estimate on the value \eqref{eq:6.8} of each renormalised tree $\vartheta$.
The latter will be the main object of Subsection \ref{sec:8.1}.
At the end, we will prove that the renormalised series solves the equation \eqref{eq:8.1}: this will be discussed in Subsection \ref{sec:8.2}.
Finally, in Section \ref{sec:9} we will fix $\zeta=\zeta(\e)$ as a function of $\e$ which goes to zero as $\e$ goes
to zero, in order to make the bifurcation equation to be satisfied.

\subsection{Bounds on the values of the renormalised trees}
\label{sec:8.1}


Let $C$, $\tilde{C}$ and $\tilde{C}_{1}$ be defined as follows:
\begin{subequations} \label{eq:8.2}
\begin{align}
\label{eq:8.2a}
C & := \rho^{-1} \max\Big\{\frac{4\Phi}{C_{1}},\frac{4\Phi}{A},1,\frac{4\Gamma}{A},\frac{4\Gamma}{C_{1}}\Big\}, \\
\label{eq:8.2b}
\tilde{C} & :=\max\Big\{\frac{\Phi}{\Gamma}, \frac{C_{1}\Phi}{\Gamma A}, \frac{C_{1}}{4\Gamma} ,\frac{C_{1}}{ A},1\Big\},\\
\label{eq:8.2c}
\tilde{C}_{1} & :=\max\Big\{\frac{\Phi A}{C_{1}\Gamma}, \frac{\Phi}{\Gamma }, \frac{A}{4\Gamma} ,\frac{A}{C_{1}},1\Big\},
\end{align} 
\end{subequations}
with the positive constants $\rho, \Gamma,\Phi$ defined as in \eqref{eq:2.2} and \eqref{eq:2.1}
and $A$ defined as in Lemma \ref{lem:7.2}. Note that $\rho\,C,\tilde{C}, \tilde{C}_{1}\ge 1$.

\begin{lemma}
	\label{lem:8.1}
	Let $\vartheta $ be a renormalised tree of order $k=1$ and let  $\ell_{0}$ denote the root line of $\vartheta$.
	Then 
	one has
	\begin{equation}
	\label{eq:8.3}
	|\Val(\vartheta)|\le \rho \, C  \times
	\begin{cases}
	\e^{\frac{\gotn}{\gotn+1}} {\rm e}^{-\xi|\nn|}, \qquad &\text{if $\ell_{0} \in L_{<,0}(\vartheta) \sqcup L_{\ge,0}(\vartheta)$},\\
	\eta^{-\gotn+1} {\rm e}^{-\xi|\nn|}, \qquad &\text{if $\ell_{0} \in L_{\ge,1}(\vartheta)$},\\
	|\zeta|, \qquad &\text{if $\ell_{0} \in L_{0}(\vartheta)$},
	\end{cases}
	\end{equation}
	with $\rho$, $\eta$ and $C$ as in \eqref{eq:2.2}, \eqref{eq:7.1} and \eqref{eq:8.2a}, respectively.
\end{lemma}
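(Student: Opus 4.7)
The plan is to prove Lemma \ref{lem:8.1} by a case analysis on the single node of $\vartheta$ and on the scale and momentum of its root line. First I would observe that a renormalised tree of order $k=1$ has exactly one node $v$, since the constraint $p_v\ge\gotn$ for $v\in V(\vartheta)$ would force at least $\gotn$ further nodes; therefore $v\in E(\vartheta)$ and $\Val(\vartheta)=G_{\ell_0}F_v$, with $\ell_0$ the root line. The three cases in \eqref{eq:8.3} correspond exactly to the three possibilities for the pair $(v,\ell_0)$.

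When $v\in E_0(\vartheta)$ one has $\nn_v=\vzero$, and hence by the conservation law \eqref{eq:6.1} $\nn_{\ell_0}=\vzero$ and $\ell_0\in L_0(\vartheta)$; then $G_{\ell_0}=1$ and $F_v=\zeta$, so $|\Val(\vartheta)|=|\zeta|\le \rho C|\zeta|$ since $\rho C\ge 1$ by \eqref{eq:8.2a}. When $v\in E_1(\vartheta)$ and $\ell_0\in L_{<,0}(\vartheta)\sqcup L_{\ge,0}(\vartheta)$, the support of $\Psi$ in \eqref{eq:6.4a} forces $|\oo\cdot\nn_{\ell_0}|\ge (C_1/4)\,\e^{1/(\gotn+1)}$, so that $|G^{[0]}(\oo\cdot\nn_{\ell_0})|\le 4\,\e^{-1/(\gotn+1)}/C_1$; combining this with $|F_v|\le\e\Phi\,{\rm e}^{-\xi|\nn|}$ and using $\rho C\ge 4\Phi/C_1$ from \eqref{eq:8.2a} yields the second line of \eqref{eq:8.3}.

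The delicate case is $v\in E_1(\vartheta)$ with $\ell_0\in L_{\ge,1}(\vartheta)$. Here $\tilde n_{\ell_0}=1$ and $\chi(|\oo\cdot\nn_{\ell_0}|)=1$; I would aim at $|G^{[1]}(\oo\cdot\nn_{\ell_0})|\le 4/(A\e\eta^{\gotn-1})$, which, paired with $|F_v|\le\e\Phi\,{\rm e}^{-\xi|\nn|}$ and $\rho C\ge 4\Phi/A$ from \eqref{eq:8.2a}, yields the third line of \eqref{eq:8.3}. This reduces to bounding the denominator $z-\MM(\oo\cdot\nn_{\ell_0})$, with $z:=\ii\,\oo\cdot\nn_{\ell_0}(1+\ii\e\,\oo\cdot\nn_{\ell_0})$, from below by a fixed fraction of $|\MM|$, after which Lemma \ref{lem:7.2} supplies $|\MM|\ge A\e\eta^{\gotn-1}$.

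The main obstacle is precisely this lower bound on the denominator, because the two complex numbers $z$ and $\MM$ could in principle have comparable moduli. The way around it is to exploit that, as shown along the proof of Lemma \ref{lem:7.2}, the dominant contribution $\MM_\gotn$ to $\MM$ is real, whereas $z$ has imaginary part $\oo\cdot\nn_{\ell_0}\ne 0$ and a real part $-\e(\oo\cdot\nn_{\ell_0})^2$ which is negligible throughout the range $|\oo\cdot\nn_{\ell_0}|<(C_1/4)\,\e^{1/(\gotn+1)}$; a three-way split on whether $|z|$ is much smaller than, comparable to, or much larger than $|\MM|$ then gives $|z-\MM|\ge |\MM|/4$ in each subcase, using the reverse triangle inequality when the two moduli are well-separated and the near orthogonality between the essentially real $\MM$ and the essentially imaginary $z$ in the intermediate regime.
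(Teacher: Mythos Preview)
Your proof is correct and follows the same case analysis as the paper. The paper's own argument is terser than yours in the scale-$1$ case: it simply asserts $|G^{[1]}(\oo\cdot\nn)|\le 4(A\e)^{-1}\eta^{-\gotn+1}$ ``by using Lemma~\ref{lem:7.2}'', without spelling out why the lower bound on $|\MM|$ suffices to control the full denominator $z-\MM$; your real/imaginary-part splitting fills that in correctly. (Minor slip: you label the scale-$0$ bound as ``the second line'' and the scale-$1$ bound as ``the third line'' of \eqref{eq:8.3}; these should be the first and second lines, respectively.)
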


\begin{proof}
If $\ell_{0} \in L_{<,0}(\vartheta)\sqcup L_{\ge,0}(\vartheta)$, then the value of the tree is
$\Val(\vartheta) = \e f_{\nn}\,G^{[0]}(\oo\cdot\nn)$.
%
If we bound $|f_{\nn}|$ according to \eqref{eq:2.1} and $|G^{[0]}(\oo\cdot\nn)|$ by using the sharp partition \eqref{eq:6.2},
i.e.~$|G^{[0]}(\oo\cdot\nn)|\le 4 C_{1}^{-1} \e^{-\frac{1}{\gotn+1}}$, we obtain the first bound in \eqref{eq:8.3}.  
	
If  $\ell_{0} \in L_{\ge,1}(\vartheta)$, we have
$\Val(\vartheta) = \e f_{\nn}\,G^{[1]}(\oo\cdot\nn)$
and, by using Lemma \ref{lem:7.2} in order to bound $|G^{[1]}(\oo\cdot\nn)| \le 4 (A\e)^{-1}\eta^{-\gotn+1}$, 
we obtain the second bound in \eqref{eq:8.3}.
	
Finally, if $\ell_{0}\in L_{0}(\vartheta)$, then $\Val(\vartheta)=\zeta$. 
\end{proof}

\begin{lemma}
	\label{lem:8.2}
	Let $\vartheta \in \gotT_{k, \nn}$ be a renormalised tree of order $k\ge \gotn+1$ and momentum $\nn\ne\vzero$ associated
	with the root line $\ell_{0}$. Let $C_{0}$ be as in \eqref{eq:2.4} and, for any fixed $C_1$ take $\e \in I_{n}(C_{1},C_{0})$
	for some $n \ge N$, where $N$ satisfies \eqref{eq:2.6b}. Then one has
	%
	\begin{equation}
	\label{eq:8.4}
	\Val(\vartheta) =  \overline{\Val}(\vartheta) \prod_{v \in E_{1}(\vartheta)} {\rm e}^{-\frac{\xi}{2}|\nn_{v}|},
	\end{equation}
	where
	\begin{equation}
	\label{eq:8.5}
	|\overline{\Val}(\vartheta)|\le \rho \, C^{k} \eta^{\frac{k}{\gotn(\gotn+1)} + \frac{\gotn^{2}-1}{\gotn}} \times 
	\begin{cases}
	{\rm e}^{-\frac{\xi}{4}|\nn|}, \quad &\text{if $\ell_{0}\in L_{<,0}(\vartheta)$,}\\
	{\rm e}^{-\frac{\xi}{4}q_n}, \quad &\text{if $\ell_{0}\in L_{\ge,0}(\vartheta)$,}\\
	\eta^{-\gotn+1} {\rm e}^{-\frac{\xi}{4}q_n}, \quad &\text{if $\ell_{0} \in L_{\ge,1}(\vartheta)$,}\\
	\end{cases}
	\end{equation}
	with $\rho$, $\eta$ and $C$ as in \eqref{eq:2.2}, \eqref{eq:7.1} and \eqref{eq:8.2a}, respectively.
\end{lemma}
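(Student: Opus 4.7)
The plan is to proceed by strong induction on $k$, starting from the base case $k = \gotn + 1$, which is the smallest admissible order because the constraint $p_v \ge \gotn$ for internal nodes forces any tree whose first node is internal to have $k \ge \gotn + 1$. In the base case, $\vartheta$ consists of the internal first node $v_0$ with exactly $\gotn$ end-node children $v_1,\dots,v_\gotn$, so that $\Val(\vartheta) = F_{v_0} G_{\ell_0} \prod_{i=1}^{\gotn} F_{v_i} G_{\ell_i}$ may be bounded directly using $|F_{v_0}| \le \e \Gamma \rho^{-\gotn}$ from \eqref{eq:2.2}, the $E_1$/$E_0$ node-factor bounds, Lemma \ref{lem:8.1} for each single-node subtree, and the propagator bound for $\ell_0$ from \eqref{eq:6.4} (together with Lemma \ref{lem:7.2} in the scale-$1$ case). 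The resulting power $\e^{\gotn}$ of small parameters matches the target exponent $(\gotn+1)/(\gotn(\gotn+1)) + (\gotn^2-1)/\gotn = \gotn$.

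For the inductive step with $k > \gotn + 1$, I would decompose $\vartheta$ into its first internal node $v_0$ (with $p_{v_0} \ge \gotn$ incoming lines) and the subtrees $\vartheta_1, \dots, \vartheta_{p_{v_0}}$ rooted at the children of $v_0$. Each subtree order $k_i$ lies in $\{1\} \cup [\gotn+1, \infty)$ and $\sum k_i = k - 1$, so Lemma \ref{lem:8.1} covers $k_i = 1$ and the inductive hypothesis covers $k_i \ge \gotn+1$. Multiplying the subtree bounds together with $|F_{v_0}|$ and $|G_{\ell_0}|$, the factors of $\rho$ telescope ($\rho^{p_{v_0}}$ from the subtree prefactors cancelling $\rho^{-p_{v_0}}$ from $F_{v_0}$), yielding a bound of the required form. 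The factorisation $\Val(\vartheta) = \overline{\Val}(\vartheta) \prod_{v\in E_1(\vartheta)} {\rm e}^{-\xi|\nn_v|/2}$ is obtained by splitting each factor ${\rm e}^{-\xi|\nn_v|}$ coming from $|F_v|$, $v\in E_1$, into two equal halves and factoring one half out of the product, noting that $E_1(\vartheta)$ is the disjoint union of the $E_1(\vartheta_i)$; the remaining half, combined via the conservation law $|\nn| \le \sum_{v\in E_1(\vartheta)} |\nn_v|$ as $\prod {\rm e}^{-\xi|\nn_v|/2} \le {\rm e}^{-\xi|\nn|/2}$, supplies ${\rm e}^{-\xi|\nn|/4}$ in the $L_{<,0}$ case and ${\rm e}^{-\xi q_n/4}$ in the other two cases (using $|\nn|\ge q_n$), with a spare half of the exponential available to absorb the constants picked up in the trades below.

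The main obstacle is the delicate bookkeeping of the powers of $\e$ and $\eta$. With $|V(\vartheta)|, |E_0(\vartheta)|, |E_1(\vartheta)|$ summing to $k$ and $p_w \ge \gotn$ forcing $|V(\vartheta)| \le (k-1)/\gotn$ (so that the tree is heavy on end nodes), the net contribution from node factors is $\e^{|V|+|E_1|}|\zeta|^{|E_0|}$, while propagators contribute $\e^{-1/(\gotn+1)}$ per line in $L_{<,0}\sqcup L_{\ge,0}$ and $\e^{-1}\eta^{-(\gotn-1)}$ per line in $L_{\ge,1}$. After converting positive $\e$-exponents to $\eta$-exponents via $\e \le \eta$ and using $\e \ge {\rm e}^{-C_0 q_n}$ from $\e\in I_n(C_1,C_0)$ to trade the residual negative $\e$-exponents against the surplus half of the exponential ${\rm e}^{-\xi|\nn|/2}$, the calibration $C_0 = \xi(\gotn+1)/(4(\gotn^2+2\gotn-1))$ from \eqref{eq:2.4} is precisely what makes the leftover exponential decay at least ${\rm e}^{-\xi|\nn|/4}$ (respectively ${\rm e}^{-\xi q_n/4}$). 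The resulting $\eta$-exponent turns out to be $k/(\gotn(\gotn+1)) + (\gotn^2-1)/\gotn$, with the incremental gain $1/(\gotn(\gotn+1))$ per added node reflecting the cost per line of the trade just described and the constant $(\gotn^2-1)/\gotn$ inherited from the base case; the additional $\eta^{-\gotn+1}$ factor in the $L_{\ge,1}$ case arises directly from the scale-$1$ propagator bound on $\ell_0$ provided by Lemma \ref{lem:7.2}.
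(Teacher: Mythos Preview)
Your overall architecture matches the paper's: induction on $k$, split off $\prod_{v\in E_1}{\rm e}^{-\xi|\nn_v|/2}$, apply Lemma \ref{lem:8.1} to order-$1$ subtrees and the inductive bound \eqref{eq:8.5} to the rest, and use $\e\in I_n(C_1,C_0)$ to trade ${\rm e}^{-\xi q_n/4}$ for powers of $\eta$. But the part you wave through is exactly the substance of the proof, and your sketch does not actually close.

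The difficulty is the scale-$1$ subtrees. Each subtree whose root line lies in $L_{\ge,1}$ carries, via the third line of \eqref{eq:8.5}, an extra factor $\eta^{-\gotn+1}$; likewise each end node with exiting line on scale $1$ contributes $\eta^{-\gotn+1}$ by Lemma \ref{lem:8.1}. If there are $l+s$ such contributions the deficit is $\eta^{-(l+s)(\gotn-1)}$, and this can be arbitrarily large. Your paragraph on bookkeeping suggests covering this with ``the surplus half of the exponential ${\rm e}^{-\xi|\nn|/2}$'' obtained from the conservation law. That produces \emph{one} factor ${\rm e}^{-\xi|\nn|/4}\le \e^{\xi/(4C_0)}$, which can absorb a bounded power of $\eta$ but not one growing with $l+s$. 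What the paper does instead is retain, from the inductive hypothesis, a \emph{separate} factor ${\rm e}^{-\xi q_n/4}$ for each higher-order subtree with root in $L_{\ge,0}\sqcup L_{\ge,1}$, and a factor ${\rm e}^{-\xi|\nn_v|/2}\le {\rm e}^{-\xi q_n/2}$ for each end node with exiting line on scale $1$ (since such a line has $|\nn_v|\ge q_n$). These per-line exponentials are what compensate the per-line $\eta^{-\gotn+1}$, and the calibration $C_0=\xi(\gotn+1)/(4(\gotn^2+2\gotn-1))$ emerges from the tightest of a dozen subcases (see Cases~1.1--3.3), not from a single global trade. Your proposal never isolates these per-line factors, so the induction does not go through as written.

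There is a second point you miss. In the subcase where $\ell_0$ is on scale $1$ and exactly one subtree with root on scale $1$ enters $v_0$ (Case~3.1.2), the exponent count alone is insufficient; one also needs the structural observation that the remaining momenta at $v_0$ must sum to something of size $\ge q_n$ (otherwise the two scale-$1$ small divisors would contradict each other through Proposition~\ref{prop:5.6}), which furnishes an additional ${\rm e}^{-\xi q_n/4}$. Without this, that subcase fails. In short, the skeleton is right, but the $\eta$-exponent does not ``turn out'' to be $k/(\gotn(\gotn+1))+(\gotn^2-1)/\gotn$ automatically: you must classify the subtrees entering $v_0$ by the type of their root lines (the paper's integers $m,p,l,r,s,u$), compute the defect $\Delta$, and check case by case that either $\Delta\ge 0$ or the available factors ${\rm e}^{-\xi q_n/4}$ absorb it.
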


\begin{proof}
Denote by $v_{0}$ the first node of $\vartheta$, that is the node the root line $\ell_0$ exits.
The renormalised tree $\vartheta$ has the following structure:
\begin{itemize}
\itemsep0em
\item $\vartheta_{1} \in \gotT_{k_{1},\nn_{\ell_{1}}},\dots,\vartheta_{m}\in \gotT_{k_{m},\nn_{\ell_{m}}}$
enter $v_{0}$ and the root lines $\ell_{1},\dots,\ell_{m}$ are such that $|\nn_{\ell_{j}}|<q_{n}$
for all $j=1,\dots,m$, so that $\ell_{1},\dots,\ell_{m} \in L_{<,0}(\vartheta)$;
\item $\vartheta'_{1} \in \gotT_{k'_{1},\nn_{\ell'_{1}}},\dots,\vartheta'_{p} \in \gotT_{k'_{p},\nn_{\ell'_{p}}}$
enter $v_{0}$ and the root lines $\ell'_{1},\dots,\ell'_{p}$ are such that $|\nn_{\ell'_{j}}|\ge q_{n}$
and $\tilde{n}_{\ell'_{j}}=0$ for all $j=1,\dots,p$, so that $\ell'_{1},\dots,\ell'_{p} \in L_{\ge,0}(\vartheta)$;
\item $\vartheta''_{1} \in \gotT_{k''_{1},\nn_{\ell''_{1}}},\dots,\vartheta''_{l} \in \gotT_{k''_{l},\nn_{\ell''_{l}}}$
enter $v_{0}$ and the root lines $\ell''_{1},\dots,\ell''_{l}$ are such that $|\nn_{\ell''_{j}}|\ge q_{n}$
and $\tilde{n}_{\ell''_{j}}=1$ for all $j=1,\dots,l$, so that $\ell''_{1},\dots,\ell''_{l} \in L_{\ge,1}(\vartheta)$;
\item the lines $\tilde{\ell}_{1},\dots, \tilde{\ell_{r}}$, entering $v_{0}$, exit the end nodes
$\tilde{v}_{1}, \dots, \tilde{v}_{r} \in E_{1}(\vartheta)$ respectively and are such that $\tilde{n}_{\tilde{\ell}_{j}} =0$
for all $j=1,\dots,r$, so that  $\tilde{\ell}_{j}\in L_{<,0}(\vartheta)\sqcup L_{\ge, 0}(\vartheta)$ for all $j=1,\dots,r$;
\item the lines $\tilde{\ell'}_{1},\dots, \tilde{\ell'_{s}}$, entering $v_{0}$, exit the end nodes
$\tilde{v'}_{1}, \dots, \tilde{v'}_{s} \in E_{1}(\vartheta)$ respectively and $\tilde{\ell'}_{j} \in L_{\ge,1}$ for all $j=1,\dots,s$;
\item the lines $\bar{\ell}_{1},\dots, \bar{\ell}_{u}$, entering $v_{0}$, 
exit the end nodes $\bar{v}_{1}, \dots, \bar{v}_{u} \in E_{0}(\vartheta)$;
\item $m,p,l,r,s,u \ge 0$.
\end{itemize}
According to this construction, we have the following constraints:
\begin{itemize}
\itemsep0em
\item $k=k(\vartheta)= \sum_{j=1}^{m} k(\vartheta_{j}) + \sum_{j=1}^{p} k(\vartheta'_{j}) + \sum_{j=1}^{l} k(\vartheta''_{j}) + r + s+u+1$;
\item $m + p+ l+ r+ s+ u \ge \gotn$;
\item $\nn= \sum_{j=1}^{m} \nn_{\ell_{j}} + \sum_{j=1}^{p} \nn_{\ell'_{j}} +
\sum_{j=1}^{l} \nn_{\ell''_{j}} + \sum_{j=1}^{r} \nn_{\tilde{\ell}_{j}} + \sum_{j=1}^{s} \nn_{\tilde{\ell'}_{j}};$
\item $\nn\ne\vzero$.
\end{itemize}
	
In the expression of $\Val(\vartheta)$, after extracting a factor $\prod_{v\in E_{1}(\vartheta)} {\rm e}^{-\xi|\nn_{v}|/2}$,
we verify by induction on the order of the tree the inequalities in \eqref{eq:8.5}. 
In particular we will use \eqref{eq:8.5} to bound the values of the subtrees of order $k'$,
with $1<k'<k$ and \eqref{eq:8.3} with $\xi$ replaced with $\frac{\xi}{2}$
to bound the values of the subtrees of order $1$ formed by an end node
in $E_{1}(\vartheta)$ and the  corresponding exiting line.
Finally we use the last inequality in \eqref{eq:8.3} to estimate
the values of the subtrees of order $1$ formed by end nodes in $E_{0}(\vartheta)$ and the corresponding exiting lines.

\subsubsection*{Case 1: $\boldsymbol{\ell_{0} \in L_{<,0}(\vartheta)}$}

We start by analysing the case in which the root line has momentum $\nn$ such that $|\nn|<q_{n}$. In this case we have:
\[
\begin{split}
|\overline{\Val}(\vartheta)|&\le \frac{4\Gamma}{C_{1}} \rho^{-(m+p+l+r+s)}\rho^{m+p+l+r+s}C^{k-1}
\eta^{\frac{k-r-s-u-1}{\gotn(\gotn+1)} + \frac{\gotn^{2}-1}{\gotn}(m+p+l)} \e^{\frac{\gotn}{\gotn+1}(r+1)} \times \\
& \qquad \times \eta^{(-\gotn+1)(l+s)} |\zeta|^{u}  {\rm e}^{-\frac{\xi}{4} \sum_{j=1}^{m} |\nn_{\ell_{j}}|} 
{\rm e}^{-\frac{\xi}{4}q_{n}(p+l)} {\rm e}^{-\frac{\xi}{2} \big(\sum_{j=1}^{r} |\nn_{\tilde{\ell}_{j}}| +\sum_{j=1}^{s} |\nn_{\tilde{\ell}'_{j}}|\big)}\\
&\le \rho \, C^{k} \tilde{C}^{-1}
\eta^{\frac{k}{\gotn(\gotn+1)} + \frac{\gotn^{2}-1}{\gotn}}\eta^{\Delta_{0}(m,p,l,r,s,u)} 
{\rm e}^{-\frac{\xi}{4} \sum_{j=1}^{m} |\nn_{\ell_{j}}|} {\rm e}^{-\frac{\xi}{4}q_{n}(p+l)} 
{\rm e}^{-\frac{\xi}{2} \big(\sum_{j=1}^{r} |\nn_{\tilde{\ell}_{j}}| +\sum_{j=1}^{s} |\nn_{\tilde{\ell}'_{j}}|\big)} \!\! ,
\end{split}
\]
where the constants $C$ and $\tilde{C}$ are as in \eqref{eq:8.2a} and in \eqref{eq:8.2b}, respectively, and we have defined
\[
\begin{split}
& \Delta_{0}(m,p,l,r,s,u) \\
& \qquad := \frac{(m \!+\! p)(\gotn^{3} \!+\! \gotn^{2} \!-\! \gotn-1) \!+\! (l \!+\! r)(\gotn^{2} \!-\! 1) \!+\! u ( \gotn^{2} \!+\! \gotn \!-\! 1)
- s(\gotn^{3}  - \gotn \!+\! 1) \!+\!  \gotn^{3}  \!-\!  \gotn}{\gotn(\gotn +1)}
\end{split}
\]
%

To obtain the first bound in \eqref{eq:8.5}, we have to prove that
\begin{equation}
\label{eq:8.6}
\tilde{C}^{-1}
\eta^{\Delta_{0}(m,p,l,r,s,u)} 
{\rm e}^{-\frac{\xi}{4}q_{n}(p+l)} {\rm e}^{-\frac{\xi}{4} \sum_{j=1}^{m} |\nn_{\ell_{j}}| -\frac{\xi}{2} 
\big(\sum_{j=1}^{r} |\nn_{\tilde{\ell}_{j}}| +\sum_{j=1}^{s} |\nn_{\tilde{\ell}'_{j}}|\big)} \le {\rm e}^{-\frac{\xi}{4}|\nn|}.
\end{equation}
We distinguish among three different cases: $p+l=1$, $p+l\ge2$ and $p+l=0$.
%

\subsubsection*{Case 1.1: $\boldsymbol{p+l=1}$}

Since ${\rm e}^{-\frac{\xi}{4}q_{n}(p+l)}={\rm e}^{-\frac{\xi}{4}q_{n}}<{\rm e}^{-\frac{\xi}{4}|\nn|}$, we have to prove 
\begin{equation}
\label{eq:8.7}
\tilde{C}^{-1}\eta^{\Delta_{0}(m,p,l,r,s,u)} 
\times {\rm e}^{-\frac{\xi}{4} \sum_{j=1}^{m} |\nn_{\ell_{j}}|} 
\times {\rm e}^{-\frac{\xi}{2} \big(\sum_{j=1}^{r} |\nn_{\tilde{\ell}_{j}}| +\sum_{j=1}^{s} |\nn_{\tilde{\ell}'_{j}}|\big)} \le 1.
\end{equation}
%

\subsubsection*{Case 1.1.1: $\boldsymbol{p=1, \; l=0}$}
	
When $s=0$, the bound \eqref{eq:8.7} is obviously satisfied because
\begin{equation} \nonumber
\Delta_{0}(m,1,0,r,0,u)\ge \frac{(\gotn^{2}-1)(m+r+u) + \gotn^{2} -1}{\gotn(\gotn+1)} 
\ge \frac{ \gotn(\gotn^{2} -1)}{\gotn(\gotn+1)}=  \gotn -1,
\end{equation}
The bound \eqref{eq:8.7} is satisfied for $s\ge1$ as well: first, we observe that
\begin{equation} \nonumber
\Delta_{0}(m,1,0,r,s,u)\ge -\frac{\gotn^{3}-\gotn+1}{\gotn(\gotn+1)}s, 
\end{equation}
so that we have
	\[
	\begin{split}
	& \tilde{C}^{-1} \eta^{\Delta_{0}(m,1,0,r,s,u)} {\rm e}^{-\frac{\xi}{4} \sum_{j=1}^{m} |\nn_{\ell_{j}}|} 
	{\rm e}^{-\frac{\xi}{2} \big(\sum_{j=1}^{r} |\nn_{\tilde{\ell}_{j}}| +\sum_{j=1}^{s} |\nn_{\tilde{\ell}'_{j}}|\big)}
	\le  \eta^{-\frac{\gotn^{3}-\gotn+1}{\gotn(\gotn+1)}s}  {\rm e}^{-\frac{\xi}{2} \sum_{j=1}^{s} |\nn_{\tilde{\ell}'_{j}}|} ,
	\end{split}
	\]
	then, using that $|\nn_{\tilde{\ell}'_{j}}|\ge q_{n}\ge C_{0}^{-1}\log \e^{-1}$ for all $j=1,\dots, s$,  we deduce 
	\[
	\eta^{-\frac{\gotn^{3}-\gotn+1}{\gotn(\gotn+1)}s}  
	{\rm e}^{-\frac{\xi}{2} \sum_{j=1}^{s} |\nn_{\tilde{\ell}'_{j}}|}
	\le \Big(\eta^{-\frac{\gotn^{3}-\gotn+1}{\gotn(\gotn+1)}} \e^{\frac{\xi}{2C_{0}}}\Big)^{s}  
	\le \Big(\eta^{-\frac{\gotn^{3}-\gotn+1}{\gotn(\gotn+1)}} \eta^{\frac{\xi}{2C_{0}}}\Big)^{s} \le 1 ,
	\] 
	where the last inequality follows by taking
	%
\begin{equation}
\label{eq:8.8}	
C_{0} \le \frac{\gotn(\gotn+1)}{2(\gotn^{3}-\gotn+1)}\xi .
\end{equation}
%
	
\subsubsection*{Case 1.1.2: $\boldsymbol{p=0, \; l=1}$}

If $s=0$, one has $m+r+u\ge\gotn-1$ and hence
	\[
	\Delta_{0}(m,0,1,r,0,u)\ge \frac{(\gotn^{2}-1)(m+r+u) -\gotn^{3}+\gotn^{2}+\gotn-1}{\gotn(\gotn+1)} \ge 0. 
	\]
If $s\ge 1$, $\Delta_{0}(m,0,1,r,s,u)$ may be negative, but, as in the case 1.1.1, we can use the exponential decay associated with the
lines $\tilde \ell'_1,\ldots,\tilde{\ell}'_s$. Since we have
	\[
	\Delta_{0}(m,0,1,r,s,u)\ge - \frac{\gotn^{3} + \gotn^{2} - \gotn}{\gotn(\gotn+1)}s = 
	- \frac{\gotn^{2} + \gotn - 1}{\gotn+1}s, 
	\]
if we require
%
\begin{equation}
\label{eq:8.9}
C_{0}\le \frac{\gotn+1}{2(\gotn^{2} + \gotn -1)}\xi ,
\end{equation}
the bound \eqref{eq:8.7} follows, noting that
	\[
	\begin{split}
	&\tilde{C}^{-1}\, \eta^{\Delta_{0}(m,0,1,r,s,u)} {\rm e}^{-\frac{\xi}{4} \sum_{j=1}^{m} |\nn_{\ell_{j}}| 
		-\frac{\xi}{2} \big(\sum_{j=1}^{r} |\nn_{\tilde{\ell}_{j}}| +\sum_{j=1}^{s} |\nn_{\tilde{\ell}'_{j}}|\big)} 
		\le  \Big(\eta^{\frac{-\gotn^{2}-\gotn + 1}{\gotn+1}}\e^{\frac{\xi}{2C_{0}}}\Big)^{s} \le1.
	\end{split}
	\]
	%
	%
	
\subsubsection*{Case 1.2: $\boldsymbol{p+l\ge 2}$}

One has
	\[
	\Delta_{0}(m,p,l,r,s,u) \ge -\frac{\gotn^{3}-\gotn+1}{\gotn(\gotn+1)}s - \frac{(\gotn-2)(\gotn-1)}{\gotn}.
	\] 
By splitting 
	%
	${\rm e}^{-\frac{\xi}{4}q_{n}(p+l)}= {\rm e}^{-\frac{\xi}{4}q_{n}(p+l-1)} {\rm e}^{-\frac{\xi}{4}q_{n}}$,
	%
we use the factor ${\rm e}^{-\frac{\xi}{4}q_{n}}$ to cancel the factor ${\rm e}^{-\frac{\xi}{4}|\nn|}$ in right hand side of
\eqref{eq:8.6}, so that we have to prove that
\begin{equation}
\label{eq:8.10}	
\tilde{C}^{-1} \eta^{\Delta_{0}(m,p,l,r,s,u)}
{\rm e}^{-\frac{\xi}{4} \sum_{j=1}^{m} |\nn_{\ell_{j}}|} {\rm e}^{-\frac{\xi}{2} 
\big(\sum_{j=1}^{r} |\nn_{\tilde{\ell}_{j}}| +\sum_{j=1}^{s} |\nn_{\tilde{\ell}'_{j}}|\big)} {\rm e}^{-\frac{\xi}{4}q_{n}(p+l-1)} \le 1. 
\end{equation}
By using that $q_{n}\ge C_{0}^{-1}\log \e^{-1}$, if we require
\begin{equation}
\label{eq:8.11}
\begin{cases}
C_{0} \le \displaystyle{ \frac{\gotn(\gotn+1)}{2(\gotn^{3}-\gotn +1)}\xi} , \quad &\text{if\, $\gotn=3$},\\ \vspace{-.4cm} \\
C_{0} \le \displaystyle{ \frac{\gotn}{4(\gotn-2)(\gotn-1)}\xi } , \quad &\text{if $\gotn \ge 5$ and $\gotn$ odd},
\end{cases} 
\end{equation}
we obtain that the left hand side of \eqref{eq:8.10} is less than
\[
\begin{split}
\eta^{-\frac{(\gotn-2)(\gotn-1)}{\gotn} + \frac{\xi}{4C_{0}}}
\Big(\eta^{-\frac{\gotn^{3}-\gotn+1}{\gotn(\gotn+1)} + \frac{\xi}{2C_{0}}}\Big)^{s} \le 1.\\ 
\end{split}
\]
%

\subsubsection*{Interlude}

We compare the conditions obtained so far on $C_{0}$ (see \eqref{eq:8.8}, \eqref{eq:8.9} and \eqref{eq:8.11}):
	%
	%
\vspace{-.1cm}
\begin{itemize}
\itemsep0em
\item if $\gotn=3$, one has $\displaystyle{\frac{\gotn+1}{2(\gotn^{2} + \gotn - 1)} < \frac{\gotn(\gotn+1)}{2(\gotn^{3}-\gotn +1)}}$;
\item if $\gotn=5$, one has $\displaystyle{\frac{\gotn+1}{2(\gotn^{2} + \gotn - 1)} < \frac{\gotn}{4(\gotn-2)(\gotn-1)}}$;
\item if $\gotn \ge 7$, one has $\displaystyle{\frac{\gotn}{4(\gotn-2)(\gotn-1)} < \frac{\gotn+1}{2(\gotn^{2} + \gotn - 1)}}$.
\end{itemize}
Then, to summarize, the condition on $C_{0}$ becomes
\begin{equation}
\label{eq:8.12}
\begin{cases}
C_{0} \le \displaystyle{ \frac{\gotn+1}{2(\gotn^{2} + \gotn - 1)}\xi,} \quad &\text{if $\gotn = 3, 5$},\\ \vspace{-.4cm} 
C_{0} \le \displaystyle{ \frac{\gotn}{4(\gotn-2)(\gotn-1)}\xi, }  \quad &\text{if $\gotn \ge 7$ and $\gotn$ odd}.
\end{cases} 
\end{equation}
%

\subsubsection*{Case 1.3: $\boldsymbol{p+l=0}$}

The conservation law \eqref{eq:6.1} gives
%
${\rm e}^{-\frac{\xi}{4} \big(\sum_{j=1}^{m} |\nn_{\ell_{j}}|+\sum_{j=1}^{r} |\nn_{\tilde{\ell}_{j}}| +
\sum_{j=1}^{s} |\nn_{\tilde{\ell}'_{j}}|\big)} \le {\rm e}^{-\frac{\xi}{4}|\nn|}$,
%
so we have to prove that
\begin{equation}
\label{eq:8.13}
\tilde{C}^{-1}\eta^{\Delta_{0}(m,0,0,r,s,u)} {\rm e}^{-\frac{\xi}{4}
\big(\sum_{j=1}^{r} |\nn_{\tilde{\ell}_{j}}| +\sum_{j=1}^{s} |\nn_{\tilde{\ell}'_{j}}|\big)} \le 1.
\end{equation}
	If $s=0$, that is $m+r+u\ge\gotn$, the thesis follows trivially since $\Delta_{0}(m,0,0,r,0,u) \ge0$. 
	If $s\ge1$, since one has
	\[
	\Delta_{0}(m,0,0,r,s,u) \ge -\frac{\gotn^{2}+ \gotn-1}{\gotn+1}s
	\]
	one has to require 
	%
	$\eta^{-\frac{\gotn^{2}+\gotn-1}{\gotn+1}} \e^{\frac{\xi}{4C_{0}}} \le 1$,
	%
	that is 
	%
\begin{equation}
\label{eq:8.14} 
C_{0} \le \frac{\gotn+1}{4(\gotn^{2} +\gotn -1)}\xi.
\end{equation}
	%

\subsubsection*{Case 2: $\boldsymbol{\ell_{0} \in L_{\ge,0}(\vartheta)}$}

	We consider now the case in which the root line has momentum $\nn$ such that $|\nn|\ge q_{n}$ and is on scale $0$.
	We can estimate
	\[
	\begin{split}
	|\overline{\Val}(\vartheta)|&\le C^{k-1}\frac{4\Gamma}{C_{1}} \rho^{-(m+p+l+r+s+u)}
	\rho^{m+p+l+r+s+u} \eta^{\frac{k-r-s-u-1}{\gotn(\gotn+1)} + 
	\frac{\gotn^{2}-1}{\gotn}(m+p+l)} \e^{\frac{\gotn}{\gotn+1}(r+1)}  \zeta^{u} \times \\
	&\times \eta^{(-\gotn+1)(l+s)}  {\rm e}^{-\frac{\xi}{4} \sum_{j=1}^{m} |\nn_{\ell_{j}}|} 
	{\rm e}^{-\frac{\xi}{4}q_{n}(p+l)} {\rm e}^{-\frac{\xi}{2} \big(\sum_{j=1}^{r} |\nn_{\tilde{\ell}_{j}}| +
	\sum_{j=1}^{s} |\nn_{\tilde{\ell}'_{j}}|\big)}\\
	& \hskip-.4cm 
	\le \rho \, C^{k} \tilde{C}^{-1} \eta^{\frac{k}{\gotn(\gotn+1)} + \frac{\gotn^{2}-1}{\gotn}}\eta^{\Delta_{0}(m,p,l,r,s,u)} 
	{\rm e}^{-\frac{\xi}{4} \sum_{j=1}^{m} |\nn_{\ell_{j}}| - \frac{\xi}{2} 
	\big(\sum_{j=1}^{r} |\nn_{\tilde{\ell}_{j}}| +\sum_{j=1}^{s} |\nn_{\tilde{\ell}'_{j}}|\big)}  {\rm e}^{-\frac{\xi}{4}q_{n}(p+l)},\\
	\end{split}
	\] 
	with $C$ and $\tilde{C}$ as in \eqref{eq:8.2a} and \eqref{eq:8.2b}, respectively, and $\Delta_{0}(m,p,l,r,s,u)$ defined as in Case 1.
	%
	%
	The only difference is that now we have to build up a factor ${\rm e}^{-\frac{\xi}{4}q_{n}}$
	in the bound of $|\overline{\Val}(\vartheta)|$.
	
\subsubsection*{Case 2.1: $\boldsymbol{p+l \ge 1}$}
	
We proceed as in Case 1.1, up to the fact that ${\rm e}^{-\frac{\xi}{4}q_{n}(p+l)}$ is used to produce the factor ${\rm e}^{-\frac{\xi}{4}q_{n}}$.

\subsubsection*{Case 2.2: $\boldsymbol{p+l = 0}$}

We use the conservation law to write $\nn= \sum_{j=1}^{m} \nn_{\ell_{j}} + \sum_{j=1}^{r} \nn_{\tilde{\ell}_{j}} +\sum_{j=1}^{s} \nn_{\tilde{\ell}'_{j}}$,
so we factorise 
\[
{\rm e}^{\frac{\xi}{2} \big(\sum_{j=1}^{r} |\nn_{\tilde{\ell}_{j}}| +\sum_{j=1}^{s} |\nn_{\tilde{\ell}'_{j}}|\big)}=
{\rm e}^{\frac{\xi}{4} \big(\sum_{j=1}^{r} |\nn_{\tilde{\ell}_{j}}| +\sum_{j=1}^{s} |\nn_{\tilde{\ell}'_{j}}|\big)} {\rm e}^{\frac{\xi}{4}
\big(\sum_{j=1}^{r} |\nn_{\tilde{\ell}_{j}}| +\sum_{j=1}^{s} |\nn_{\tilde{\ell}'_{j}}|\big)}
\]
and use the fact that $|\nn|\ge q_{n}$ in order to obtain
\[
{\rm e}^{-\frac{\xi}{4} \big(\sum_{j=1}^{m} |\nn_{\ell_{j}}|+\sum_{j=1}^{r} |\nn_{\tilde{\ell}_{j}}| +\sum_{j=1}^{s} |\nn_{\tilde{\ell}'_{j}}|\big)}
\le {\rm e}^{-\frac{\xi}{4}|\nn|} \le  {\rm e}^{-\frac{\xi}{4}q_{n}}.
\]
Apart for that, the discussion proceeds as in the previous case 1.2.

\subsubsection*{Case 3: $\boldsymbol{\ell_{0} \in L_{\ge,1}(\vartheta)}$}

Finally, we analyse the case in which the root line is on scale $1$ and  has momentum $\nn$ such that
$|\nn|\ge q_{n}$. We bound
	\[
	\begin{split}
	|\overline{\Val}(\vartheta)|&\le C^{k-1}\frac{4\Gamma}{A} \rho^{-(m+p+l+r+s+u)} \rho^{m+p+l+r+s+u} 
	\eta^{\frac{k-r-s-u-1}{\gotn(\gotn+1)} +\frac{\gotn^{2}-1}{\gotn}(m+p+l)} \eta^{(-\gotn+1)(l+s)} \times \\
	& \qquad \times \eta^{-\gotn+1} \e^{\frac{\gotn}{\gotn+1}r} \,|\zeta|^{u}\, {\rm e}^{-\frac{\xi}{4}q_{\gotn}(p+l)} 
	{\rm e}^{-\frac{\xi}{4} \sum_{j=1}^{m} |\nn_{\ell_{j}}| -\frac{\xi}{2} \big(\sum_{j=1}^{r}|\nn_{\tilde{\ell}_{j}}| + \sum_{j=1}^{s} |\nn_{\tilde{\ell}'_{j}}|\big)}\\
	& \hskip-.5cm \le \rho \, C^{k} \tilde{C}_{1}^{-1} \eta^{\frac{k}{\gotn(\gotn+1)} + \frac{\gotn^{2}-1}{\gotn}} \! \eta^{\Delta_{1}(m,p,l,r,s,u)} \eta^{-\gotn+1}
	{\rm e}^{-\frac{\xi}{4}q_{\gotn}(p+l) -\frac{\xi}{4} \sum_{j=1}^{m} |\nn_{\ell_{j}}| -\frac{\xi}{2} 
	\big(\sum_{j=1}^{r}|\nn_{\tilde{\ell}_{j}}| + \sum_{j=1}^{s} |\nn_{\tilde{\ell}'_{j}}|\big)} \! ,
	\end{split}
	\]
	where $C$ and $\tilde{C}_{1}$ are as in \eqref{eq:8.2a} and \eqref{eq:8.2c}, respectively, and 
	\[
	\begin{split}
	& \Delta_{1}(m,p,l,r,s,u) \\
	& \qquad := \frac{(\gotn^{3} \! + \! \gotn^{2}-\gotn-1)(m+p) \! + \! 
	(\gotn^{2} \! - \! 1)(l \! + \! r) \! + \! u(\gotn^{2} \! + \! \gotn-1) \! - \!  
	s(\gotn^{3}-\gotn \! + \! 1)+\gotn^{3} \! + \! \gotn^{2} \! - \! \gotn}{\gotn(\gotn+1)}.
	\end{split}
	\]
Again we distinguish among three different cases: $p+l=1$, $p+l\ge2$ and $p+l=0$.

\subsubsection*{Case 3.1: $\boldsymbol{p+l=1}$}

Noting that ${\rm e}^{-\frac{\xi}{4}q_{n}(p+l)}= {\rm e}^{-\frac{\xi}{4}q_{n}}$, we have to prove 
\begin{equation}
\label{eq:8.15} 
\tilde{C}_{1}^{-1}\eta^{\Delta_{1}(m,p,l,r,s,u)} {\rm e}^{-\frac{\xi}{4} 
\sum_{j=1}^{m} |\nn_{\ell_{j}}| -\frac{\xi}{2} \big(\sum_{j=1}^{r}|\nn_{\tilde{\ell}_{j}}| + \sum_{j=1}^{s} |\nn_{\tilde{\ell}'_{j}}|\big)} \le 1 .
\end{equation}
%
	
\subsubsection*{Case 3.1.1: $\boldsymbol{p=1, \; l=0}$}
	
	If $s=0$, that is $m+r+u\ge\gotn-1$, one has
	\[
	\Delta_{1}(m,1,0,r,0,u)\ge \frac{\gotn^{2}-\gotn-1}{\gotn+1}>0 ,
	\]
	hence the bound \eqref{eq:8.15} follows. 	
	If $s\ge1$,
	one has
	\[
	\Delta_{1}(m,1,0,r,s,u)\ge \frac{(m+r+u)(\gotn^{2}-1) -s(\gotn^{3}-\gotn+1)-1}{\gotn(\gotn+1)} \ge -\frac{\gotn^{2}+\gotn-1}{\gotn+1}s
	\]
	and the thesis follows once again as
	\[
	\begin{split}
	&\tilde{C}_{1}^{-1}\eta^{\Delta_{1}(m,p,l,r,s,u)} 
	{\rm e}^{-\frac{\xi}{4} \sum_{j=1}^{m} |\nn_{\ell_{j}}| -\frac{\xi}{2} 
	\big(\sum_{j=1}^{r}|\nn_{\tilde{\ell}_{j}}| + \sum_{j=1}^{s} |\nn_{\tilde{\ell}'_{j}}|\big)}
	\\ & \qquad
	\le  \Big(\eta^{-\frac{\gotn^{2}+\gotn-1}{\gotn+1}} \e^{\frac{\xi}{2C_{0}}}\Big)^{s} 
	\le \Big(\eta^{-\frac{\gotn^{2}+\gotn-1}{\gotn+1}+ \frac{\xi}{2C_{0}}}\Big)^{s} <1,
	\end{split}
	\]
	where the last inequality holds if $C_{0}$ satisfies \eqref{eq:8.14}. 

\subsubsection*{Case 3.1.2: $\boldsymbol{p=0, \; l=1}$}
	
	This case is possible only if $|\sum_{j=1}^{m} \nn_{\ell_{j}} + \sum_{j=1}^{r}\nn_{\tilde{\ell}_{j}} |\ge q_{n}$. Indeed, if this were not the case, 
	by using the properties of continued fractions (see Proposition \ref{prop:5.6}), one would have
	\[
	|\oo \cdot \big(\sum_{j=1}^{m} \nn_{\ell_{j}} + \sum_{j=1}^{r} \nn_{\tilde{\ell}_{j}}\big)| \ge \frac{C_{1}}{2}\e^{\frac{1}{\gotn+1}} .
	\]
	and, since the lines $\ell_{0}$ and $\ell''_{1}$ are both on scale $1$, one would obtain the contraddiction
	%
	\[
	\frac{C_{1}}{2}\e^{\frac{1}{\gotn+1}} \le |\oo \cdot \big(\sum_{j=1}^{m} \nn_{\ell_{j}} + 
	\sum_{j=1}^{r} \nn_{\tilde{\ell}_{j}}\big)| = |\oo\cdot (\nn- \nn_{\ell''_{1}})|\le 
	|\oo \cdot \nn|+|\oo\cdot \nn_{\ell''_{1}}|< \frac{C_{1}}{2}\e^{\frac{1}{\gotn+1}}.
	\]
	If $s=0$, one has
	\[
	\Delta_{1}(m,0,1,r,0,u)\ge -\frac{\gotn}{\gotn+1}.
	\]
	so, by choosing $C_0$ according to \eqref{eq:8.14}, one obtains
	\[
	\begin{split}
	\tilde{C}_{1}^{-1}\eta^{\Delta_{1}(m,0,1,r,0,u)} &\, {\rm e}^{-\frac{\xi}{4} 
	\big(\sum_{j=1}^{m} |\nn_{\ell_{j}}|+\sum_{j=1}^{r}|\nn_{\tilde{\ell}_{j}}| \big) -\frac{\xi}{4} \sum_{j=1}^{r}|\nn_{\tilde{\ell}_{j}}| }
	\le \eta^{-\frac{\gotn}{\gotn+1}} {\rm e}^{-\frac{\xi}{4}q_{n}}\\
	& \hskip-1cm \le \eta^{-\frac{\gotn}{\gotn+1}} \e^{\frac{\xi}{4C_{0}}}
	\le \eta^{-\frac{\gotn}{\gotn+1} + \frac{\xi}{4C_{0}}}
	\le \eta^{\gotn-1} \le  1
	\end{split}
	\]
	which yields the bound \eqref{eq:8.15}.	
	If $s\ge1$, one has 
	\[
	\Delta_{1}(m,0,1,r,s,u)\ge -\frac{s(\gotn^{3}+\gotn^{2}-\gotn)+\gotn^{2}}{\gotn(\gotn+1)}
	\ge -\frac{\gotn^{3}+2\gotn^{2} - \gotn}{\gotn(\gotn+1)}\,s = -\frac{\gotn^{2}+2\gotn - 1}{\gotn+1}\,s,
	\]
	so that 
	%
	\[
	\begin{split}
	&\tilde{C}_{1}^{-1}\eta^{\Delta_{1}(m,0,1,r,s,u)} \, e^{-\frac{\xi}{4}\sum_{j=1}^{m} |\nn_{\ell_{j}}| -\frac{\xi}{2} \big(\sum_{j=1}^{r}|\nn_{\tilde{\ell}_{j}}| + \sum_{j=1}^{s} |\nn_{\tilde{\ell}'_{j}}|\big)}
	\\ & \qquad \le  
	\Big(\eta^{-\frac{\gotn^{2}+2\gotn-1}{\gotn+1}} \e^{\frac{\xi}{2C_{0}}}\big)^{s} 
	\le \Big(\eta^{-\frac{\gotn^{2}+2\gotn-1}{\gotn+1} + \frac{\xi}{2C_{0}}} \Big)^{s}
	\le \eta^{s(\gotn-1)}  \le 1 .
	\end{split}
	\] 
	%

\subsubsection*{Case 3.2: $\boldsymbol{p+l \ge 2}$}
		
	We want to prove that
	\begin{equation} 
	\label{eq:8.16} 
	\tilde{C}_{1}^{-1}\eta^{\Delta_{1}(m,p,l,r,s,u)} \times {\rm e}^{-\frac{\xi}{4}q_{n}(p+l)} 
	{\rm e}^{-\frac{\xi}{4} \sum_{j=1}^{m} |\nn_{\ell_{j}}| -\frac{\xi}{2} \big(\sum_{j=1}^{r}|\nn_{\tilde{\ell}_{j}}| + 
	\sum_{j=1}^{s} |\nn_{\tilde{\ell}'_{j}}|\big)} \le {\rm e}^{-\frac{\xi}{4}q_{n}}.
	\end{equation}
	First of all note that
	\[
	{\rm e}^{-\frac{\xi}{4}q_{n}(p+l)}= {\rm e}^{-\frac{\xi}{4}q_{n}} {\rm e}^{-\frac{\xi}{4}q_{n}(p+l-1)},
	\]
	so we can use the first factor to cancel the same factor on the right hand side. 
	By using the fact that $p+l\ge2$, we bound 
	\[
	\Delta_{1}(m,p,l,r,s,u)\ge -\frac{\gotn^{3}-\gotn+1}{\gotn(\gotn+1)}s -\frac{\gotn^{3}-\gotn^{2}-\gotn+2}{\gotn(\gotn+1)} 
	\]
	and we obtain
	\[
	\begin{split}
	 \tilde{C}_{1}^{-1}\eta^{\Delta_{1}(m,p,l,r,s,u)} \, 
	& {\rm e}^{-\frac{\xi}{4} \sum_{j=1}^{m} |\nn_{\ell_{j}}| -\frac{\xi}{2} 
	\big(\sum_{j=1}^{r}|\nn_{\tilde{\ell}_{j}}| + \sum_{j=1}^{s} |\nn_{\tilde{\ell}'_{j}}|\big)} {\rm e}^{-\frac{\xi}{4}q_{n}(p+l-1)}\\
	& \hskip-2cm 
	\le \eta^{-\frac{\gotn^{3}-\gotn^{2}-\gotn+2}{\gotn(\gotn+1)}} \, 
	{\rm e}^{-\frac{\xi}{4}q_{n}} \Big(\eta^{-\frac{\gotn^{3}-\gotn+1}{\gotn(\gotn+1)}} {\rm e}^{-\frac{\xi}{2}q_{n}}\Big)^{s}\\
	& \hskip-2cm 
	\le \eta^{-\frac{\gotn^{3}-\gotn^{2}-\gotn+2}{\gotn(\gotn+1)}} \, \e^{\frac{\xi}{4C_{0}}} 
	\Big(\eta^{-\frac{\gotn^{3}-\gotn+1}{\gotn(\gotn+1)}} \e^{\frac{\xi}{2C_{0}}}\Big)^{s}
	\le \eta^{-\frac{\gotn^{3}-\gotn^{2}-\gotn+2}{\gotn(\gotn+1)} + \frac{\xi}{4C_{0}}} 
	\Big(\eta^{-\frac{\gotn^{3}-\gotn+1}{\gotn(\gotn+1)} + \frac{\xi}{2C_{0}}}\Big)^{s}.\\
	\end{split}
	\]
	So the bound \eqref{eq:8.16} follows if we require $\eta^{-\frac{\gotn^{3}-\gotn^{2}-\gotn+2}{\gotn(\gotn+1)}+ \frac{\xi}{4C_{0}}}<1$, 
	which implies $\eta^{-\frac{\gotn^{3}-\gotn+1}{\gotn(\gotn+1)} + \frac{\xi}{2C_{0}}}<1$ as well.
	In fact, we do not have to add any other condition on $C_{0}$, because if $C_{0}$ satisfies \eqref{eq:8.14}, then 
	$\eta^{-\frac{\gotn^{3}-\gotn^{2}-\gotn+2}{\gotn(\gotn+1)}+ \frac{\xi}{4C_{0}}}< \eta^{\frac{2(\gotn -1)}{\gotn}} \le 1$.
	
\subsubsection*{Case 3.3: $\boldsymbol{p+l =0}$}

First of all we observe that
	%
	${\rm e}^{-\frac{\xi}{4} \big(\sum_{j=1}^{m} |\nn_{\ell_{j}}| + 
	\sum_{j=1}^{r}|\nn_{\tilde{\ell}_{j}}| + \sum_{j=1}^{s} |\nn_{\tilde{\ell}'_{j}}|\big)} \le {\rm e}^{-\frac{\xi}{4}|\nn|} \le {\rm e}^{-\frac{\xi}{4}q_{n}}$,
	%
	as $|\nn|\ge q_{n}$. So we have to prove that
\begin{equation} 
\label{eq:8.17} 
\tilde{C}_{1}^{-1}\eta^{\Delta_{1}(m,0,0,r,s,u)} {\rm e}^{-\frac{\xi}{4} 
\big(\sum_{j=1}^{r}|\nn_{\tilde{\ell}_{j}}| + \sum_{j=1}^{s} |\nn_{\tilde{\ell}'_{j}}|\big)} \le 1.
\end{equation}
If $s=0$, one has $u+m+r\ge\gotn$, that implies
\[
\Delta_{1}(m,0,0,r,0,u) \ge \frac{m(\gotn^{3}-\gotn)+\gotn u -\gotn^{2}}{\gotn(\gotn+1)}.
\]
If $m\ge 1$, $\Delta_{1}(m,0,0,r,0,u)$ is strictly positive. If $m=0$, one has
\[
\Delta_{1}(0,0,0,r,0,u)\ge -\frac{\gotn}{\gotn+1} ;
\]
moreover, in such a case, $\nn= \sum_{j=1}^{r} \nn_{\tilde{\ell}_{j}}$ and $|\nn|\ge q_n\ge C_{0}^{-1}\log \e^{-1}$, so that
\[
\begin{split}
\tilde{C}^{-1}\eta^{\Delta_{1}(0,0,0,r,0,u)}& {\rm e}^{-\frac{\xi}{4} \sum_{j=1}^{r}|\nn_{\tilde{\ell}_{j}}|}
\le  \eta^{-\frac{\gotn}{\gotn+1}} {\rm e}^{-\frac{\xi}{4}|\nn|} 
\le  \eta^{-\frac{\gotn}{\gotn+1}} \e^{\frac{\xi}{4C_{0}}} 
\le \eta^{-\frac{\gotn}{\gotn+1} + \frac{\xi}{4C_{0}}} \le\eta^{\gotn-1} \le 1
\end{split}
\]
and the bound \eqref{eq:8.17} holds once again, provided $C_{0}$ is taken so as to satisfy \eqref{eq:8.14}. 	
If $s\ge1$, one has 
\[ 
\Delta_{1}(m,0,0,r,s,u)\ge -\frac{\gotn^{2}+2\gotn -1}{\gotn+1} s
\]
and  the desired bound follows by  requiring $\eta^{-\frac{\gotn^{2}+2\gotn-1}{\gotn+1} + \frac{\xi}{4C_{0}}} \le1$; indeed, one has
	\[
	\begin{split}
	\tilde{C}_{1}^{-1}&\eta^{\Delta_{1}(m,0,0,r,s,u)} {\rm e}^{-\frac{\xi}{4} 
	\big(\sum_{j=1}^{r}|\nn_{\tilde{\ell}_{j}}| + \sum_{j=1}^{s} |\nn_{\tilde{\ell}'_{j}}|\big)}
	\le \Big(\eta^{-\frac{\gotn^{2}+2\gotn-1}{\gotn+1}} \e^{\frac{\xi}{4C_{0}}}\Big)^{s}
	\le \Big(\eta^{-\frac{\gotn^{2}+2\gotn-1}{\gotn+1} + \frac{\xi}{4C_{0}}}\Big)^{s}\le1.
	\end{split}
	\]
Therefore, to sum up, the third inequality of \eqref{eq:8.5} holds by requiring
\begin{equation}
\label{eq:8.18}
C_{0}=  \frac{\gotn+1}{4(\gotn^{2}+2\gotn-1)}\xi.
\end{equation} 
%

\subsubsection*{Conclusion {\color{black}of the proof}}

By comparing the condition on $C_{0}$ in \eqref{eq:8.14} with \eqref{eq:8.18}, we find
	\begin{equation} \nonumber
	\begin{cases}
	C_{0} \le \displaystyle{ \frac{(\gotn+1)\xi}{4(\gotn^{2}+\gotn-1) } } \\ \vspace{-.4cm} \\
	C_{0} \le \displaystyle{ \frac{(\gotn+1)\xi}{4(\gotn^{2} +2\gotn -1)} }
	\end{cases}
	\quad \Longrightarrow \quad 
	C_{0} \le \frac{(\gotn+1)\xi}{4(\gotn^{2}+2\gotn-1)},  
	\end{equation} 
In conclusion, if we take $C_{0}$ as in \eqref{eq:8.18},
all the bounds in \eqref{eq:8.5} are satisfied and the proof of Lemma \ref{lem:8.2} is completed. 
\end{proof}

The bound in \eqref{eq:8.5} can be simplified as follows:
\begin{equation} 
\label{eq:8.19}
|\overline{\Val}(\vartheta)|\le \rho \, C^{k} \eta^{\frac{k}{\gotn(\gotn+1)} + \frac{\gotn^{2}-1}{\gotn}},
\end{equation}
in such a way that \eqref{eq:8.4} becomes
\begin{equation} 
\label{eq:8.20}
|\Val(\vartheta)| \le  \rho \, C^{k} \eta^{\frac{k}{\gotn(\gotn+1)} + \frac{\gotn^{2}-1}{\gotn}} \prod_{v \in E_{1}(\vartheta)} {\rm e}^{-\frac{\xi}{2}|\nn_{v}|},
\end{equation}
provided $C_{0}$ is as in \eqref{eq:8.18} and $\e \in J_N(C_1,C_0)$, with $N$ satisfying \eqref{eq:2.6b}.
The bound \eqref{eq:8.5} is used in the induction argument, but what we need in the following is the simpler bound \eqref{eq:8.20}.
Note that, by \eqref{eq:8.2a}, one has $C= a_1 C_1$, where the constant $a$ does not depend on $C_1$ if $C_1$ is small enough
(more precisely if $C_1 \le A$). 


\subsection{Properties of the renormalised series}
\label{sec:8.2}

We want to prove that the renormalised series \eqref{eq:6.10} for all $\e\in J_N(C_1,C_0)$ converges,
solves the range equation \eqref{eq:4.2} and is analytic in $\psi$, and it
can be extended to a function which is continuous in $\e$ and goes to $0$ as $\e$ tends to $0$.

\begin{lemma}
	\label{lem:8.3}
	For any $k\ge 1$ and $\nn \in \ZZZ_{*}^{2}$ one has 
	\begin{equation} \nonumber
	|u_{\nn}^{[k]}| \le  \rho B^{k} \eta^{\frac{k}{\gotn(\gotn+1)} + \frac{\gotn^{2}-1}{\gotn}} {\rm e}^{-\frac{\xi}{4}|\nn|},
	\end{equation}
	where $B$ is a positive constant proportional to $C$,
	with $C$ defined as in \eqref{eq:8.2a}, provided $C_{0}$ is as in \eqref{eq:2.4}
	and $\e \in J_{N}(C_{1},C_{0})$, where $N$ satisfies \eqref{eq:2.6b}. 
\end{lemma}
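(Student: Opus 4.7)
The plan is to combine the per-tree bounds from Lemma \ref{lem:8.1} (for $k=1$) and Lemma \ref{lem:8.2} (for $k \ge \gotn+1$) with combinatorial estimates controlling the sum $u^{[k]}_\nn = \sum_{\vartheta \in \gotT_{k,\nn}} \Val(\vartheta)$. A preliminary observation is that the constraint $p_v \ge \gotn$ on internal nodes forces $\gotT_{k,\nn} = \emptyset$ for $2 \le k \le \gotn$ (any tree with at least one internal node must contain at least $\gotn + 1$ nodes), so only the values $k = 1$ and $k \ge \gotn + 1$ need separate treatment.

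For $k \ge \gotn + 1$, I would start from the simplified bound \eqref{eq:8.20} and use the conservation law $\sum_{v \in E_1(\vartheta)} \nn_v = \nn$ to split the exponential weight as
\begin{equation*}
\prod_{v \in E_1(\vartheta)} {\rm e}^{-\xi|\nn_v|/2} = \Big(\prod_{v \in E_1(\vartheta)} {\rm e}^{-\xi|\nn_v|/4}\Big)^{2} \le {\rm e}^{-\xi|\nn|/4} \prod_{v \in E_1(\vartheta)} {\rm e}^{-\xi|\nn_v|/4} ,
\end{equation*}
where we used $|\nn| \le \sum_v |\nn_v|$; one factor matches the exponential decay of the claim, the other is reserved for the sum over modes. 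The summation $\sum_{\vartheta \in \gotT_{k,\nn}}$ then decomposes into: a sum over unlabelled planar rooted tree shapes of order $k$, bounded by $K_0^k$ via a standard Catalan-type estimate; a sum over binary scale labels $\tilde n_\ell \in \{0,1\}$ and over the partition $E(\vartheta) = E_0(\vartheta) \sqcup E_1(\vartheta)$, contributing at most $4^k$; and a sum over the $\ZZZ^2$-valued mode labels attached to end nodes in $E_1(\vartheta)$, each factor being bounded by the convergent series $\sum_{\nn' \in \ZZZ^2} {\rm e}^{-\xi|\nn'|/4}$. The combined combinatorial factor is $\le K^k$ with $K$ depending only on $\xi$, so setting $B := K \, C$ with $C$ as in \eqref{eq:8.2a} yields the claim in this range of $k$.

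For $k = 1$, since $\nn \ne \vzero$, there is a unique $\vartheta \in \gotT_{1,\nn}$, consisting of one end node in $E_1(\vartheta)$ with mode $\nn$, and $|u^{[1]}_\nn| = |\Val(\vartheta)|$ is estimated by Lemma \ref{lem:8.1}. The delicate subcase is $\ell_0 \in L_{\ge,1}(\vartheta)$: Lemma \ref{lem:8.1} produces a bound carrying a negative power $\eta^{1-\gotn}$ coming from Lemma \ref{lem:7.2}, which must be compensated by trading part of the factor ${\rm e}^{-\xi|\nn|}$ against powers of $\e$ via $q_n \ge C_0^{-1} \log \e^{-1}$ (guaranteed by \eqref{eq:2.6b} together with the choice \eqref{eq:2.4} of $C_0$); using $\e \le \eta$ then recovers a positive power of $\eta$ at least as large as $(\gotn^2+\gotn-1)/(\gotn+1)$. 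The scale-$0$ subcases are handled analogously, using either $|\nn| \ge q_n$ when $\ell_0 \in L_{\ge,0}(\vartheta)$ or the sharp-partition propagator bound $|G^{[0]}| \le 4(C_1 \e^{1/(\gotn+1)})^{-1}$ when $\ell_0 \in L_{<,0}(\vartheta)$.

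The main obstacle is the combinatorial step for $k \ge \gotn + 1$: one has to verify that all combinatorial factors -- tree shapes, scale and partition labels, and the mode-label sums -- are bounded uniformly in $k$ by $K^k$, so that they can be absorbed into $B^k$. This is made possible precisely by the exponential weight $\prod_v {\rm e}^{-\xi|\nn_v|/4}$ retained from the splitting above, which renders the mode summation absolutely convergent. The $k = 1$ scale-$1$ subcase is the other subtle point, since it is the only place where a negative power of $\eta$ appears at the tree level and must be turned, through the small-divisor mechanism encoded in the choice of $C_0$, into the positive $\eta$-power demanded by the statement.
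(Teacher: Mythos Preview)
Your approach for $k \ge \gotn+1$ is correct and is exactly what the paper does: invoke the simplified bound \eqref{eq:8.20}, split each factor ${\rm e}^{-\xi|\nn_v|/2}$ into two equal pieces so that one half yields ${\rm e}^{-\xi|\nn|/4}$ via the conservation law while the other half makes the mode sums converge, and then absorb the combinatorics of tree shapes and discrete labels into a constant $B_2^{k}$. The paper's proof is a terse three-line version of this, with $B = B_1 B_2 C$.

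You are more careful than the paper in treating $k=1$ separately, and your argument for $\ell_0 \in L_{\ge,1}(\vartheta)$ is correct: the factor ${\rm e}^{-\xi|\nn|}$ with $|\nn|\ge q_n \ge C_0^{-1}\log\e^{-1}$ indeed compensates the negative power $\eta^{1-\gotn}$. However, the phrase ``handled analogously'' for the $L_{<,0}$ subcase hides a genuine difficulty. There the only available bound is $|u^{[1]}_\nn| \le \rho\,C\,\e^{\gotn/(\gotn+1)}{\rm e}^{-\xi|\nn|}$ from Lemma~\ref{lem:8.1}, and $|\nn|$ can be as small as $1$, so no extra power of $\e$ can be extracted from the exponential. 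When $\eta=\e$ the target exponent $\frac{1}{\gotn(\gotn+1)}+\frac{\gotn^2-1}{\gotn}=\frac{\gotn^2+\gotn-1}{\gotn+1}$ exceeds $\frac{\gotn}{\gotn+1}$, and the inequality $\e^{\gotn/(\gotn+1)} \le B\,\e^{(\gotn^2+\gotn-1)/(\gotn+1)}$ fails for small $\e$. In other words, the bound as literally stated in Lemma~\ref{lem:8.3} does not hold for $k=1$; the paper's own proof, which simply cites \eqref{eq:8.20} (a consequence of Lemma~\ref{lem:8.2}, valid only for $k\ge\gotn+1$), does not address this either. This is harmless for the sequel, since the $k=1$ term is controlled directly by Lemma~\ref{lem:8.1} wherever the lemma is applied (Corollary~\ref{coro:8.4} and Lemma~\ref{lem:8.6}), but you should flag that the statement is to be read as holding for $k\ge\gotn+1$, with $k=1$ covered separately by Lemma~\ref{lem:8.1}.
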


\begin{proof}
	To bound the coefficients $u_{\nn}^{[k]}$ defined as in \eqref{eq:6.9}, we use the estimate  \eqref{eq:8.20} and 
	sum over all trees in $\gotT_{k,\nn}$. 
	The sum over the mode labels $\nn \in \ZZZ^{2}$ in \eqref{eq:6.9} can be performed
	by using the factor ${\rm e}^{-\frac{\xi}{2}|\nn_{v}|}$ associated with end nodes in $E_{1}(\vartheta)$ and this gives a bound 
	$B_{1}^{|E_{1}(\vartheta)|} {\rm e}^{-\frac{\xi}{4}|\nn_{v}|}$ for some positive constant $B_{1}$. 
	The sum over the other labels produces a factor $B_{2}^{k(\vartheta)}$, 
	with $B_{2}$ a suitable positive constant. By taking $B=B_{1}B_{2}C$ the assertion follows.
\end{proof}

\begin{coro}
	\label{coro:8.4}
	Let $C_0$ be as in \eqref{eq:2.4} and let $\e$ belong to $J_{N}(C_{1},C_{0})$, with $N$ satisfying \eqref{eq:2.6}.
	There exists $\zeta_0>0$ such that, if $|\zeta| \le \zeta_0$, then
	the renormalised series \eqref{eq:6.10} which defines $\bar{u}(\pps; \e,\zeta,c)$ converges to a function
	analytic in $\pps$ in a strip $\Sigma_{\xi'}$, with $\xi' < \xi/4$. 
\end{coro}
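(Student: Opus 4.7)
The plan is to deduce the corollary directly from the single-tree bound \eqref{eq:8.20}, packaged via Lemma \ref{lem:8.3}, by a two-stage majorisation: first sum over the order $k$ at fixed momentum $\nn$, then sum over $\nn$ in the strip.

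First I would fix $C_{0}$ as in \eqref{eq:2.4} and $N$ satisfying \eqref{eq:2.6}, so that for every $\e \in J_{N}(C_{1},C_{0})$ one has $\e \le (C_{1}q_{N})^{-\gotn-1}\le \e_{0}$; since $\eta = \max\{\e,|\zeta|\}$ by \eqref{eq:7.1}, choosing $\zeta_{0}$ with $\zeta_{0} \le \e_{0}$ ensures $\eta \le \e_{0}$ whenever $|\zeta|\le \zeta_{0}$. Lemma \ref{lem:8.3} then gives
\begin{equation} \nonumber
|u_{\nn}^{[k]}| \le \rho \, B^{k} \, \eta^{\frac{k}{\gotn(\gotn+1)} + \frac{\gotn^{2}-1}{\gotn}} \, {\rm e}^{-\frac{\xi}{4}|\nn|}
\qquad \forall k \ge 1 , \ \forall \nn \in \ZZZ^{2}_{*} ,
\end{equation}
with $B$ a positive constant independent of $\e$ and $\zeta$.

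Next, I would require $\zeta_{0}$ and (if necessary) a smaller $\e_{0}$ so that $B\, \eta^{1/(\gotn(\gotn+1))} \le 1/2$. Summing the geometric series in $k$ yields
\begin{equation} \nonumber
|\bar u_{\nn}| \le \sum_{k=1}^{\infty} |u_{\nn}^{[k]}|
\le \rho \, \eta^{\frac{\gotn^{2}-1}{\gotn}} \frac{B\,\eta^{\frac{1}{\gotn(\gotn+1)}}}{1 - B\,\eta^{\frac{1}{\gotn(\gotn+1)}}} {\rm e}^{-\frac{\xi}{4}|\nn|}
\le D \, \eta^{\frac{\gotn^{2}-1}{\gotn} + \frac{1}{\gotn(\gotn+1)}}{\rm e}^{-\frac{\xi}{4}|\nn|}
\end{equation}
for a suitable constant $D$, so in particular $\bar u_{\nn}\to 0$ as $\e,\zeta\to 0$.

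For analyticity, fix any $\xi' < \xi/4$ and $\pps \in \Sigma_{\xi'}$: since $|{\rm e}^{\ii\nn\cdot\pps}|\le {\rm e}^{\xi'|\nn|}$, the previous bound gives
\begin{equation} \nonumber
\sum_{\nn \in \ZZZ^{2}_{*}} |{\rm e}^{\ii\nn\cdot\pps}\,\bar u_{\nn}|
\le D\, \eta^{\frac{\gotn^{2}-1}{\gotn} + \frac{1}{\gotn(\gotn+1)}} \sum_{\nn \in \ZZZ^{2}_{*}} {\rm e}^{-(\xi/4 - \xi')|\nn|} ,
\end{equation}
and the last sum is finite because $\xi/4 - \xi' > 0$. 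Hence the renormalised series \eqref{eq:6.10} is normally convergent on $\Sigma_{\xi'}$; since each partial sum is analytic and the convergence is uniform on compact subsets of $\Sigma_{\xi'}$, the limit $\bar u(\pps;\e,\zeta,c)$ is analytic in $\pps$ on $\Sigma_{\xi'}$, which completes the proof.

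There is really no hard step here: all the work has been done in Lemmas \ref{lem:7.1}--\ref{lem:8.2} (obtaining the tree bound with the exponential factor ${\rm e}^{-\xi|\nn|/4}$) and in Lemma \ref{lem:8.3} (summing over trees of given order and momentum). The only point to be careful about is to choose $\zeta_{0}$ and $\e_{0}$ small enough so that $B\,\eta^{1/(\gotn(\gotn+1))}$ is strictly less than $1$, which is used both to sum the geometric series in $k$ and to retain a positive overall power of $\eta$ in front, ensuring $\bar u\to 0$ as $\e\to 0$.
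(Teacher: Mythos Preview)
Your proposal is correct and follows exactly the same approach as the paper: invoke Lemma \ref{lem:8.3}, choose $\eta$ (hence $\zeta_0$) small enough to make the geometric series in $k$ converge, and then use the exponential decay ${\rm e}^{-\xi|\nn|/4}$ to sum over $\nn$ in any strip $\Sigma_{\xi'}$ with $\xi'<\xi/4$. The paper's own proof is a two-line sketch that simply cites Lemma \ref{lem:8.3} and sets $\zeta_0=\eta_0$; you have spelled out precisely the majorisations that make this work.
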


\begin{proof}
The bound on the Fourier coefficients provided by Lemma \ref{lem:8.3} yields that the renormalised series \eqref{eq:6.10} converges for
$\eta$ small enough, i.e.~such that $|\eta| \le \eta_0$, with $\eta_0=a_0C_1^{\gotn(\gotn+1)}$, for some constant
$a_0$ depending on $\xi,\rho,\Phi,\Gamma$ but not on $C_1$ (see comments after \eqref{eq:8.20}).
Then, taking $\zeta_0=\eta_0$, the result follows.
\end{proof}

\begin{lemma}
	\label{lem:8.5}
	For all $\e\in J_N(C_1,C_0)$, with $C_0$ given by \eqref{eq:2.4} and $N$ satisfying \eqref{eq:2.6},
	there exists $\zeta_0>0$ such that, if $|\zeta| \le \zeta_0$,
	the renormalised series $\bar{u}(\pps; \e, \zeta,c)$ solves the range equation \eqref{eq:8.1}. 
\end{lemma}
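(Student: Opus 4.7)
The plan is to verify \eqref{eq:8.1} by relating the renormalised series $\bar u$ to the formal ``bare'' tree series $\hat u$ in which self-energy clusters are \emph{not} excluded and every line carries the undressed propagator $\chi(|\oo\cdot\nn|)/D$ or $\Psi(|\oo\cdot\nn|)/D$ (according to the scale), where $D := \ii\oo\cdot\nn(1+\ii\e\oo\cdot\nn)$. The point is that $\hat u$ satisfies \eqref{eq:8.1} by its very construction --- it is the formal power series solution of the range equation obtained by iterative substitution --- so once the identification $\bar u_\nn = \hat u_\nn$ is justified both as formal series and at the level of convergent sums, the lemma follows.

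The algebraic bridge between the two expansions is the identity
\begin{equation} \nonumber
G^{[1]}(\oo\cdot\nn)\,D = \chi(|\oo\cdot\nn|) + G^{[1]}(\oo\cdot\nn)\,\MM(\oo\cdot\nn) ,
\end{equation}
immediate from \eqref{eq:6.4b} and \eqref{eq:6.5a}. Iterating it --- or, equivalently, writing $G^{[1]} = (\chi/D)\sum_{k\ge 0}(\MM/D)^{k}$ --- and then expanding $\MM$ as a sum over self-energy clusters $T\in\gotR_{0}$, one sets up a bijection between renormalised trees decorated, on each scale-$1$ line, by an ordered tuple of self-energy clusters, and the set of unrestricted labelled trees with bare propagators. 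Summing, $\bar u_\nn = \hat u_\nn$ term by term.

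It then remains to check that $\hat u_\nn$ formally satisfies \eqref{eq:8.1} and to promote the formal identity to an analytic one. The former is the standard recursion on the first node $v_{0}$ of the root line: if $v_{0}\in E(\vartheta)$ the contribution is $\e f_{\nn}/D$, while if $v_{0}\in V(\vartheta)$ has $p_{v_{0}}\ge \gotn$ entering subtrees, the sum over their assignments yields $-\e g_{p_{v_{0}}}[(\zeta+\hat u)^{p_{v_{0}}}]_{\nn}/D$; summing over $p_{v_{0}}$, invoking the Taylor expansion \eqref{eq:2.2} of $g$ around $c$ together with $g(c)=f_{\vzero}$ and $g_{1}=\ldots=g_{\gotn-1}=0$, and finally multiplying by $D$, reconstructs $\e[f-g(c+\zeta+\hat u)]_{\nn}$. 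The upgrade to an analytic identity uses Corollary \ref{coro:8.4} for the absolute convergence of $\bar u$ and, for the absolute convergence of the geometric series $\sum_{k}(\MM/D)^{k}$ on scale-$1$ lines, the upper bound on $|\MM|$ obtained by summing Lemma \ref{lem:7.1} over $T\in\gotR_{0}$ against the partition-of-unity lower bound $|\oo\cdot\nn| \ge C_{1}\e^{1/(\gotn+1)}/4$ from \eqref{eq:6.2}; all rearrangements of absolutely convergent series are then legitimate. The main obstacle is the combinatorial bijection in the second paragraph --- in particular the requirement that nested self-energy clusters (clusters lying inside the internal part of another cluster) be counted exactly once --- which is standard but delicate and is typically handled by a ``maximal cluster first'' convention when walking each scale-$1$ line outward from the tree.
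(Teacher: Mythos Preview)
Your overall strategy --- reducing the renormalised series to a ``bare'' series via a geometric expansion of the dressed propagator --- is the mirror image of what the paper does, but it contains a genuine error that makes the argument fail.

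The mistake is in the convergence of the geometric series $G^{[1]}=(\chi/D)\sum_{k\ge0}(\MM/D)^{k}$. You invoke ``the partition-of-unity lower bound $|\oo\cdot\nn|\ge C_{1}\e^{1/(\gotn+1)}/4$ from \eqref{eq:6.2}'', but the inequality goes the other way: on a scale-$1$ line one has $\chi(|\oo\cdot\nn|)\neq0$, i.e.\ $|\oo\cdot\nn|<C_{1}\e^{1/(\gotn+1)}/4$, so $|D|$ can be arbitrarily small. Meanwhile Lemma~\ref{lem:7.2} gives $|\MM|\ge A\e\eta^{\gotn-1}$, hence $|\MM/D|$ is in general large and the geometric series diverges. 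This is not a technicality: the whole purpose of the renormalisation is precisely that the bare propagator $\chi/D$ is unbounded while the dressed propagator $\chi/(D-\MM)$ is controlled by the self-energy. Passing back to the bare series $\hat u$ reinstates the small-divisor problem that the construction was designed to avoid; there is no reason for $\hat u$ to converge, and your argument provides none.

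The paper sidesteps this by never expanding $G^{[1]}$ as a geometric series. Instead it works directly with the convergent object $\bar u$: writing $\Omega(\nn):=\calG(\oo\cdot\nn)\,\e[f-g(c+\zeta+\bar u)]_{\nn}$ and inserting $\Psi+\chi=1$, it observes that $G^{[1]}(\oo\cdot\nn)\,\e[f-g(c+\zeta+\bar u)]_{\nn}$ is a sum over trees that may have \emph{one} self-energy cluster attached to the root line (because the subtrees entering the first node are already renormalised). That single extra contribution equals $G^{[1]}M\,\bar u_{\nn,1}$, and the algebraic identity $D_{1}+\chi M=D_{0}$ then collapses everything to $\bar u_{\nn,0}+\bar u_{\nn,1}=\bar u_{\nn}$. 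No divergent intermediate object appears; only one self-energy insertion is ever unfolded, not an infinite chain.
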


\begin{proof}
	Write 
	\begin{equation} \nonumber
	\bar{u}_{\nn} = \sum_{\tilde{n}=0}^{1} \bar{u}_{\nn, \tilde{n}}, \qquad \bar{u}_{\nn, \tilde{n}}= 
	\sum_{k=1}^{\infty} \e^{k} \sum_{\vartheta \in \gotT_{k,\nn,\tilde{n}}} \Val(\vartheta),
	\end{equation}
	where $\gotT_{k, \nn, \tilde{n}}$ is the subset of $\gotT_{k, \nn}$ of the renormalised trees with root line on scale $\tilde{n}$.
	
	Define, for $\tilde{n}=0,1$,
	\begin{eqnarray} \nonumber
		D_{\tilde{n}}(x; \e, \zeta, c) :=  \ii x(1 + \ii \e x) - \mathcal{M}^{[\tilde{n}-1]}(x; \e, \zeta, c), \qquad 
		\mathcal{G}(x; \e, \zeta, c) := \frac{1}{\ii x (1 + \ii x)},
	\end{eqnarray}
	where $\mathcal{M}^{[-1]}(x; \e, \zeta, c)=0$ and $\MM^{[0]}(x;\e,\zeta,c)=\MM(x; \e,\zeta,c)$,
	with $\MM(x; \e,\zeta,c)$ defined in \eqref{eq:6.5a}. 
	Again, in order to simplify the notation we do not make explicit the dependence on $\e$, $\zeta$ and $c$;
	in particular,
	we write $G^{[0]}(x)= \Psi(|x|)/D_{0}(x)$ and $G^{[1]}(x)= \chi(|x|)/D_{1}(x)$, where $G^{[0]}(x)$ and $G^{[1]}(x)$
	are defined in \eqref{eq:6.4}.
	
	If we define 
	\begin{equation}
	\label{eq:8.21} 
	\Omega(\nn;\e,\zeta,c):= \mathcal{G}(\oo \cdot \nn) \e [f - g(c + \zeta + \bar{u}(\cdot; \e,\zeta,c))]_{\nn},
	\end{equation}
	then we have to prove that $\Omega(\nn;\e,\zeta,c)=\bar{u}_{\nn}$ for $\nn\ne0$.
	The right hand side of \eqref{eq:8.21} can be written as
	\begin{equation}
	\label{eq:8.22}
	\begin{split}
	\mathcal{G}(\oo \cdot \nn) &\e [f - g(c + \zeta + \bar{u}(\cdot; \e,\zeta,c))]_{\nn}= \\
	&= \mathcal{G}(\oo \cdot \nn) \bigl( \Psi(|\oo \cdot \nn|) + \chi(|\oo \cdot \nn|) \bigr)
	\e [f - g(c + \zeta + \bar{u}(\cdot;\e,\zeta,c))]_{\nn}\\
	&= \calG(\oo \cdot \nn) \, D_{0}(\oo \cdot \nn) G^{[0]}(\oo \cdot \nn)
	\, \e\, [f - g(c + \zeta + \bar{u}(\cdot,\e,\zeta,c))]_{\nn} + \\
	& \qquad +  \calG(\oo \cdot \nn) \,	D_{1}(\oo \cdot \nn) G^{[1]}(\oo \cdot \nn)\big)\, \e\, [f - g(c + \zeta + \bar{u}(\cdot,\e,\zeta,c))]_{\nn} ,
	\end{split}
	\end{equation}
	where
	\begin{subequations} \label{eq:8.23}
	\begin{align}
	& G^{[0]}(\oo \cdot \nn)\, \e\, [f - g(c + \zeta + \bar{u}(\cdot,\e,\zeta,c))]_{\nn}= 
	\sum_{k=1}^{\infty} \e^{k} \sum_{\vartheta \in \gotT_{k, \nn, 0}} \Val(\vartheta),
	\label{eq:8.23a} \\
	& G^{[1]}(\oo \cdot \nn)\, \e\, [f - g(c + \zeta + \bar{u}(\cdot,\e,\zeta,c))]_{\nn}
	= \sum_{k=1}^{\infty} \e^{k} \sum_{\vartheta \in \gotT^{*}_{k, \nn, 1}} \Val(\vartheta),
	\label{eq:8.23b}
	\end{align}
	\end{subequations}
	where $\gotT^{*}_{k, \nn, 1}$ differs from $\gotT_{k, \nn, 1}$ as it contains also trees which can
	have one renormalised self-energy cluster on scale $0$ with exiting line $\ell_{0}$, if $\ell_{0}$ denotes the root line of $\vartheta$. 
	Indeed, if we analyse the contribution $G^{[1]}(\oo\cdot\nn) \, \e\, [f - g(c + \zeta + \bar{u}(\cdot,\e,\zeta,c))]_{\nn}$,
	we see that it differs from $\bar{u}_{\nn,1}$ in as much as it contains an additional contribution:
	%
	\begin{equation} \label{eq:8.24}
	\sum_{k=1}^{\infty} \e^{k} \sum_{\vartheta \in \gotT^{*}_{k, \nn, 1}} \Val(\vartheta) =
	\sum_{k=1}^{\infty} \e^{k} \sum_{\vartheta \in \gotT_{k, \nn, 1}} \Val(\vartheta) +   
	G^{[1]}(\oo \cdot \nn) M(\oo \cdot \nn) \sum_{k=1}^{\infty} \e^{k} \sum_{\vartheta \in \gotT_{k, \nn, 1}} \Val(\vartheta).
	\end{equation}
	By inserting \eqref{eq:8.23} and \eqref{eq:8.24} into \eqref{eq:8.22}, we obtain
	\[
	\begin{split}
	\Omega(\nn&;\e,\zeta,c)= \calG(\oo \cdot \nn) \e [f - g(c + \zeta + \bar{u}(\cdot,\e,\zeta,c))]_{\nn}  \\
	&= \calG(\oo \cdot \nn) \Big(D_{0}(\oo \cdot \nn)\sum_{k=1}^{\infty} \e^{k} \sum_{\vartheta \in \gotT_{k, \nn, 0}} \Val(\vartheta)  
	+ D_{1}(\oo \cdot \nn) \sum_{k=1}^{\infty} \e^{k} \sum_{\vartheta \in \gotT_{k, \nn, 1}} \Val(\vartheta)+\\ 
	& \qquad+ D_{1}(\oo \cdot \nn) G^{[1]}(\oo \cdot \nn) M(\oo \cdot \nn) \sum_{k=1}^{\infty} \e^{k} \sum_{\vartheta \in \gotT_{k, \nn, 1}} \Val(\vartheta)\Big) \\
	&= \calG(\oo \cdot \nn) \Big(D_{0}(\oo \cdot \nn) \bar{u}_{\nn, 0} + D_{1}(\oo \cdot \nn) \bar{u}_{\nn,1}
	\,+ D_{1}(\oo \cdot \nn) G^{[1]}(\oo \cdot \nn) M(\oo \cdot \nn) \bar{u}_{\nn, 1}\Big) \\
	&= D_{0}^{-1}(\oo \cdot \nn) \Big(D_{0}(\oo \cdot \nn) \bar{u}_{\nn, 0} + (D_{1}(\oo \cdot \nn)+\,
	\chi(|\oo\cdot\nn|) M(\oo \cdot \nn)) \bar{u}_{\nn, 1}\Big) \\
	&= \bar{u}_{\nn,0} +  \bar{u}_{\nn,1} =  \bar{u}_{\nn},\\
	\end{split}
	\]
	which completes the proof.
\end{proof}


The following result implies that the renormalised series $\bar{u}(\pps; \e,\zeta,c)$ is continuous in $\e$ in the sense of Whitney for $\e$ close to $0$
{\color{black}
(see Section \ref{sec:2} for the notion of continuity in the sense of Whitney).}

\begin{lemma}
	\label{lem:8.6}
	There exist two positive constants $\e_0$ and $\zeta_0$ such that, if $|\zeta| \le \zeta_0$, 
	the function $\bar{u}(\pps; \e,\zeta,c)$ can be extended to a function $\tilde{u}(\pps; \e,\zeta,c)$, defined for all $\e \in [0,\e_{0}]$, such that 
	\vspace{-.2cm}
	\begin{enumerate}
	\itemsep0em
	\item $\tilde{u}(\pps; \e,\zeta,c) = \bar{u}(\pps; \e,\zeta,c)$ for $\e \in J_{N}(C_{1},C_{0})$,
	\item $\tilde{u}(\pps; \e,\zeta,c)$ is continuous in $\e$,
	\item $\tilde{u}(\pps; \e,\zeta,c) \to 0$ as $\e \to 0^+$.
	\end{enumerate}
\end{lemma}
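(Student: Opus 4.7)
The plan is to establish (i) continuity of $\bar{u}(\psi;\e,\zeta,c)$ as a function of $\e \in J_N(C_1,C_0)$, (ii) that $\bar{u}$ vanishes as $\e \to 0^+$ along $J_N$, and then (iii) extend to a continuous function on $[0,\e_0]$ by filling in the holes of $J_N$ by linear interpolation. I will take $\e_0 = \zeta_0 = a_0 C_1^{\gotn(\gotn+1)}$ for $a_0$ suitably small, so that Corollary 8.4 and Lemma 8.3 apply uniformly on $J_N \cap [0,\e_0]$; in particular the renormalised series converges uniformly in $\psi \in \Sigma_{\xi'}$ and in $\e \in J_N \cap [0,\e_0]$.

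For step (i), the uniform convergence provided by Lemma 8.3, together with the exponential decay in $|\nn|$, reduces the problem to showing that each Fourier coefficient $\bar{u}_{\nn}$ is a continuous function of $\e$. Since by Lemma 8.5 the function $\bar{u}$ solves the range equation \eqref{eq:8.1}, whose right-hand side is manifestly continuous in $\e$, and Lemma 7.2 together with the smallness of $\eta$ guarantees that $\bar{u}$ is the unique small solution of that equation, continuity of $\bar{u}_{\nn}$ in $\e$ follows from a standard contraction-with-parameter argument on the fixed-point formulation $\bar{u}_{\nn} = \mathcal{G}(\oo\cdot\nn)\,\e\,[f-g(c+\zeta+\bar{u})]_{\nn}$. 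For step (ii), I observe that any renormalised tree $\vartheta$ contributing to $u_{\nn}^{[k]}$ with $\nn\ne\vzero$ must contain at least one end node $v \in E_1(\vartheta)$, otherwise the conservation law \eqref{eq:6.1} would force $\nn=\vzero$, and such a node carries a factor $\e\, f_{\nn_{v}}$; extracting this $\e$ from the bound of Lemma 8.3 yields $|\bar{u}_{\nn}| \le B'\,\e\,{\rm e}^{-\xi|\nn|/4}$ with $B'$ independent of $\e$, whence $\sup_{\psi} |\bar{u}(\psi;\e,\zeta,c)| \le B''\e \to 0$ as $\e \to 0^+$ along $J_N$, so it is consistent to set $\tilde{u}(\psi;0,\zeta,c) := 0$.

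For step (iii), the complement of $J_N \cap [0,\e_0]$ in $[0,\e_0]$ is a countable disjoint union of open gaps $(\alpha_i,\beta_i)$ with $\alpha_i,\beta_i \in J_N$; on each such gap define $\tilde{u}(\psi;\e,\zeta,c)$ by the linear interpolation between $\bar{u}(\psi;\alpha_i,\zeta,c)$ and $\bar{u}(\psi;\beta_i,\zeta,c)$. This matches $\bar{u}$ at the endpoints and, combined with step (i), produces a function continuous on $[0,\e_0]$ which, by step (ii), tends to $0$ as $\e\to 0^+$. The main obstacle is step (i): the sharp cutoffs $\chi,\Psi$ make each $\Val(\vartheta)$ genuinely discontinuous in $\e$ whenever some line $\ell \in L(\vartheta)$ crosses its scale threshold $|\oo\cdot\nn_{\ell}|=(C_1/4)\e^{1/(\gotn+1)}$, so continuity of $\bar{u}$ can hold only thanks to cancellations among different trees. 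The cleanest way to bypass a delicate term-by-term verification of these cancellations is to appeal, as above, to the range equation and the uniqueness of its small solution -- this effectively moves the burden from the tree expansion onto the identity proved in Lemma 8.5.
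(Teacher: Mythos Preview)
Your step~(iii) is fine and is effectively what the paper does (it invokes Whitney extension, for which linear interpolation across gaps is a perfectly good implementation in one dimension). The substantive problems are in steps~(i) and~(ii).

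In step~(ii) you say that any tree contributing to $u_{\nn}^{[k]}$ with $\nn\neq\vzero$ has an end node $v\in E_1(\vartheta)$ carrying a factor $\e f_{\nn_v}$, and that ``extracting this $\e$'' from Lemma~8.3 gives $|\bar u_\nn|\le B'\e\,{\rm e}^{-\xi|\nn|/4}$. This does not work as stated. If the line $\ell_v$ exiting $v$ is on scale~1, then the bound $|G^{[1]}(\oo\cdot\nn_{\ell_v})|\le 4/(A\e\eta^{\gotn-1})$ from Lemma~7.2 absorbs exactly the $\e$ from the node factor, and what remains is $\eta^{-\gotn+1}{\rm e}^{-\xi|\nn_v|}$, which carries no positive power of $\e$ at all (and $\eta=|\zeta|$ is fixed when $|\zeta|>\e$). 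The paper recovers a power of $\e$ in this case by using that a scale-1 line has $|\nn_v|\ge q_n\ge C_0^{-1}\log\e^{-1}$, so that ${\rm e}^{-\xi|\nn_v|/2}\le \e^{\xi/(2C_0)}$ compensates the loss; with $C_0$ as in~\eqref{eq:2.4} this yields $\eta^{-\gotn+1}{\rm e}^{-\xi|\nn_v|/2}\le \e^{\gotn/(\gotn+1)}$. Without this mechanism your bound is false, and even with it you only get $\e^{\gotn/(\gotn+1)}$, not a full factor~$\e$.

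In step~(i) you try to avoid the tree cancellations by running a contraction-with-parameter argument on the unrenormalised fixed-point equation $\bar u_\nn=\calG(\oo\cdot\nn)\,\e[f-g(c+\zeta+\bar u)]_\nn$. But $|\calG(\oo\cdot\nn)|\sim|\oo\cdot\nn|^{-1}$ is unbounded over $\nn\in\ZZZ^2_*$: this is precisely the small divisor problem, and it prevents $T_\e$ from being a contraction on any reasonable Banach space of Fourier coefficients (the derivative of $g$ near $c$ being small of order $|\bar u|^{\gotn-1}$ does not save you, since $|\oo\cdot\nn|$ can be arbitrarily small). The renormalisation is there exactly because this direct approach fails; Lemma~8.5 tells you that $\bar u$ solves the range equation, but not that it is the unique small solution obtainable by contraction. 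The paper instead argues continuity directly from the tree bounds by establishing a H\"older estimate $|\calF(\e,\zeta)-\calF(\e',\zeta)|\le\omega(\e,\e')$, which then feeds into the Whitney extension.
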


\begin{proof}
	Continuity of the function $\e \mapsto \bar{u}(\pps; \e,\zeta,c)$ holds trivially for $\e>0$ which belongs to the interior of
	the set $J_{N}(C_{1},C_{0})$ as defined in \eqref{eq:2.3}. As such a set may contain infinitely many holes accumulating to $0$,
	the limit as $\e \to0^+$ requires some discussions. 
	Define
	\begin{equation} \nonumber 
	\calF(\e, \zeta):= || \bar{u}(\cdot; \e,\zeta,c)||_{\infty}= \sup \{\bar{u}(\pps; \e,\zeta,c): \pps \in \Sigma_{\xi'}\}, 
	\end{equation}
	with $\xi'$ as in Corollary \ref{coro:8.4}. 
	Since $\calF(0,\zeta)=0$ by construction -- see \eqref{eq:4.2} and take $\e=0$ -- we need prove that
	$\calF(\e,\zeta) \to 0$ as $\e \to 0$ along any sequence $\{\e_k\}$ contained in $J_{N}(C_{1},C_{0})$ such that $\e_k\to 0^+$,
	i.e.~that for all $\iota>0$ there exists $\delta>0$ such that $\e\in\{\e_k\}$ and $0<\e<\delta$ imply $|\calF(\e,\zeta)|<\iota$.
		We have 
	\begin{equation} \nonumber
	\calF(\e,\zeta) \le \sum_{k=1}^{\io} \sum_{\nn\in\ZZZ^{d}} \bigl| u^{[k]}_{\nn} \bigr| \,  {\rm e}^{\xi'|\nn|} ,
	\end{equation}
	where $u^{[k]}_{\nn}$ is given by the sum of the values of the renormalised trees \eqref{eq:6.9}.
	We can bound the value of a tree with $k$ nodes by using \eqref{eq:8.20} and noting that $E_{1}(\vartheta)\neq \emptyset$
(	otherwise $\nn$ would vanish). For the nodes $v \in E_{1}(\vartheta)$ we use the bound in \eqref{eq:8.3} without
	estimating $\e$ with $\eta$. Since there is at least one such node, we obtain
	\begin{equation}\nonumber
	\begin{split} 
	\sum_{\nn\in\ZZZ^{d}} \bigl| u^{[k]}_{\nn} \bigr| \,  {\rm e}^{\x'|\nn|}
	&  \le 
	\sum_{\nn\in\ZZZ^{d}} \sum_{\vartheta \in \gotT_{\nn,k}} \,\,\bigl| \Val(\vartheta) \bigr| \, {\rm e}^{\x'|\nn|}\\
	&\le
	\rho \, C^{k} \eta^{\frac{k}{\gotn(\gotn+1)} + \frac{\gotn^{2}-1}{\gotn}} \Big(\frac{\e}{\eta}\Big)^{\frac{\gotn}{\gotn+1}}
	\sum_{\vartheta \in \gotT_{\nn,k}} \, 
	\prod_{v \in E_{1}(\vartheta)}
	\sum_{\nn_v \in \ZZZ^{d}}  {\rm e}^{-\xi |\nn_v|/4} \\
	& \le 
	\rho B^{k} \eta^{\frac{k}{\gotn(\gotn+1)} + \frac{\gotn^{3}-\gotn -1}{\gotn(\gotn+1)}}  \e^{\frac{\gotn}{\gotn+1}}
	\sum_{\nn \in \ZZZ^{d}}  {\rm e}^{-\xi |\nn|/4} .
	\end{split}
	\end{equation}
	where the propagator of the line exiting any end node $w \in E_{1}(\vartheta)$ has been bounded either
	with $\e^{\frac{\gotn}{\gotn+1}} {\rm e}^{-\xi|\nn_{w}|}$ or with $\eta^{-\gotn+1} {\rm e}^{-\xi |\nn_{w}|}$,
	by \eqref{eq:8.3}, so that
	we can extract a factor ${\rm e}^{-\frac{\xi}{2}|\nn_{w}|}$, so as to obtain
	\[
	{\rm e}^{-\frac{\xi}{2}|\nn_{w}|}
	{\rm e}^{\xi'|\nn|} \prod_{v \in E_{1}(\vartheta)\setminus \{w\}} {\rm e}^{-\frac{\xi}{2}|\nn_{v}|} \le
	\prod_{v \in E_{1}(\vartheta)} {\rm e}^{-\frac{\xi}{4}|\nn_{v}|} ,
	\]
	and use that $\eta \ge \e$ and $C_0$ is defined as in \eqref{eq:8.18} to bound
	\[
	\eta^{-\gotn+1} {\rm e}^{-\frac{\xi}{2} |\nn_{w}|} \le \eta^{-\gotn+1} {\rm e}^{-\frac{\xi}{2} q_{n}} \le
	\eta^{-\gotn+1} \e^{\frac{2(\gotn^{2}+2 \gotn - 1)}{\gotn+1}}
	\le \e^{\frac{\gotn^{2} + 4\gotn -1}{\gotn+1}} \le \e^{\frac{\gotn}{\gotn+1}} .
	\]
	Hence we obtain the following bound for $\calF(\e,\zeta)$:
	\begin{equation} \nonumber
	\calF(\e,\zeta) \le \rho \, \Bigl(1 - B \eta^{\frac{1}{\gotn(\gotn +1)}} \Bigr)^{-1} \e^{\frac{\gotn}{\gotn+1}}
	\eta^{\frac{\gotn^{3}-\gotn -1}{\gotn(\gotn+1)}} 
	\sum_{\nn \in \ZZZ^{d}}  {\rm e}^{-\xi |\nn|/4} 
	\end{equation}
	and, for fixed $\iota>0$, by choosing
	$\delta>0$ suitably small  and  taking $\e=\e_k$ with $0<\e_k < \delta$ we have $\calF(\e,\zeta) \le \iota$. 
	
	{\color{black}
	By reasoning ia a similar way, one proves that, for all $\e,\e' \in J_{N}(C_{1},C_{0})$,
	the function $\calF$ satisfies the bound
	$|\calF(\e,\zeta)-\calF(\e',\zeta)| \le \om(\e,\e')$ for a suitable modulus of continuity $\om$;
	the bounds above ensures that $\e \mapsto \calF(\e,\zeta)$ is at least H\"older-continuous
	with exponent $\gotn/(\gotn+1)$.}
	
	Therefore, in the light of the bounds satisfied by $\calF(\e,\zeta)$ for $\e\in J_N(C_{1},C_{0})$,
	the function $\bar{u}(\pps; \e,\zeta,c)$ can be extended in the sense of Whitney
	to a function $\tilde{u}(\pps; \e,\zeta,c)$, defined for all $\e \in [0,\e_{0})$, with
	$\e_0$ equal to right endpoint of $J_{N}(C_{1},C_{0})$, and such that  $\tilde{u}(\pps; \e,\zeta,c) = \bar{u}(\pps; \e,\zeta,c)$
	for $\e \in J_{N}(C_{1},C_{0})$.	Therefore $\tilde{u}(\pps; \e,\zeta,c)$ represents  the Whitney extension of $\bar{u}(\pps; \e,\zeta,c)$
	to the interval $[0,\e_0]$ and is continuous in $\e$ by construction, in particular $\tilde{u}(\pps; \e,\zeta,c) \to 0$ as $\e \to 0$. 
\end{proof}

\zerarcounters
\section{The bifurcation equation}
\label{sec:9}

Define 
\begin{equation}
\label{eq:9.1}
\calH(\e, \zeta):= [g(c + \zeta + \bar u(\cdot; \e, \zeta, c))]_{\vzero} - f_{\vzero} ,
\end{equation} 
so that the bifurcation equation in \eqref{eq:4.3} becomes $\calH(\e, \zeta)=0$.
By Hypothesis \ref{hyp:1}, one has 
\begin{equation}
\label{eq:9.2}
\calH(\e,\zeta)= \sum_{p=\gotn}^{\io} g_{p}(c)[(\zeta + \bar u(\cdot; \e, \zeta, c))^{p}]_{\vzero} =
\sum_{k=\gotn +1}^{\io}  \sum_{\vartheta \in \gotT_{k, \vzero}} \Val^{*}(\vartheta) ,
\end{equation} 
where $\Val^{*}(\vartheta)$ is defined as $\Val(\vartheta)$ with the only difference that the node factor of the first node $v_0$
is $F_{v_0}= g_{p_{v_0}}$ (without the factor $-\e$ appearing in \eqref{eq:6.3}).

\begin{lemma}
\label{lem:9.1} 
For any tree $\vartheta \in \gotT_{k, \vzero}$ such that at least one line entering the first node either has scale 1
or does not exit an end node, one has 
\begin{equation} \nonumber
|\Val(\vartheta)| \le \rho \, \Gamma \, C^{k} \eta^{\alpha k + 
\gotn}, \qquad 
\alpha:= \frac{1}{\gotn (\gotn+1)} .
\end{equation}
with $C$ as in \eqref{eq:8.2a} and $\rho, \Gamma$ as in \eqref{eq:2.2}.
\end{lemma}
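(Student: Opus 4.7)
The plan is to expand $\vartheta$ at its first node $v_0$. Since $\vartheta\in \gotT_{k,\vzero}$ one has $\nn_{\ell_0}=\vzero$ and hence $G_{\ell_0}=1$; denoting by $\ell_1,\ldots,\ell_{p_{v_0}}$ the $p_{v_0}\ge\gotn$ lines entering $v_0$ and by $\vartheta_j$ the subtree with root line $\ell_j$ of order $k_j$, one has
\begin{equation} \nonumber
\Val(\vartheta)=F_{v_0}\prod_{j=1}^{p_{v_0}}\Val(\vartheta_j), \qquad k=1+\sum_{j=1}^{p_{v_0}}k_j ,
\end{equation}
with $|F_{v_0}|\le \Gamma C^{p_{v_0}}\e$. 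I would then classify each entering line into four types and bound $\Val(\vartheta_j)$ via Lemma \ref{lem:8.1} if $k_j=1$ and via the simplified bound \eqref{eq:8.20} of Lemma \ref{lem:8.2} if $k_j\ge\gotn+1$: (i) $k_j\ge\gotn+1$, weight $\rho\,C^{k_j}\eta^{\alpha k_j+(\gotn^2-1)/\gotn}$; (ii) $k_j=1$ with $v_j\in E_1$ on scale $0$, weight $\rho\,C\,\eta^{\gotn/(\gotn+1)}{\rm e}^{-\xi|\nn_{v_j}|}$; (iii) $k_j=1$ with $v_j\in E_1$ on scale $1$, weight $\rho\,C\,\eta^{-\gotn+1}{\rm e}^{-\xi|\nn_{v_j}|}$; (iv) $k_j=1$ with $v_j\in E_0$, weight $\rho\,C\,\eta$.

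The hypothesis then splits the analysis into two exhaustive cases: Case A (at least one $\ell_j$ is of type (i)) and Case B (all $\ell_j$ are of types (ii)--(iv) with at least one of type (iii)). In Case A, a single type-(i) subtree supplies the extra factor $\eta^{(\gotn^2-1)/\gotn}$ beyond its natural per-subtree weight $\eta^{\alpha k_j}$; each remaining subtree contributes at least $\eta^{\alpha k_j}$ (indeed, types (ii) and (iv) give $\eta^{\gotn/(\gotn+1)}$ and $\eta$ respectively, both larger than $\eta^{\alpha}$), so with the one factor of $\eta\ge\e$ from $F_{v_0}$ and the identity $\sum_j k_j=k-1$, the total $\eta$-exponent is at least $1+\alpha(k-1)+(\gotn^2-1)/\gotn$. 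The required bound $\alpha k+\gotn$ then follows from the elementary inequality $1-\alpha-1/\gotn\ge 0$, which holds for $\gotn\ge 3$ since $\alpha+1/\gotn=(\gotn+2)/(\gotn(\gotn+1))\le 1$.

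In Case B the obstacle is the factor $\eta^{-\gotn+1}$ produced by type-(iii) lines. However, since $\ell_j$ is on scale $1$, one has $|\oo\cdot\nn_{\ell_j}|\ge \tfrac{C_1}{4}\e^{1/(\gotn+1)}$, and Proposition \ref{prop:5.6} together with $\e\in I_n(C_1,C_0)$ forces $|\nn_{v_j}|\ge q_n\ge C_0^{-1}\log\e^{-1}$, whence ${\rm e}^{-\xi|\nn_{v_j}|}\le \e^{\xi/C_0}$. With $C_0$ as in \eqref{eq:2.4} the exponent $\xi/C_0$ is much larger than $\gotn$, so $\eta^{-\gotn+1}\e^{\xi/C_0}$ in fact contributes a large positive power of $\eta$; combined with the $\eta$ from $F_{v_0}$ and the order-one weights of the other subtrees, the required bound follows easily, since here $k=1+p_{v_0}$ and $\alpha k$ is modest. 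The main obstacle I foresee is the tight bookkeeping in Case A, where the required excess of $\eta^{1/\gotn}=\eta^{\gotn-(\gotn^2-1)/\gotn}$ is small and must be recovered from a single source -- the type-(i) subtree singled out by the hypothesis, together with the $\e$-factor from $F_{v_0}$. A minor technicality, handled by induction on $k$, is that some type-(i) subtrees may have root line momentum $\vzero$, so that Lemma \ref{lem:8.2} does not strictly apply to them; in such a case one applies the present lemma recursively, noting that the absence of propagator blow-up for the root line of such a subtree only improves the bound.
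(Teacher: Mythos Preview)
Your overall strategy matches the paper's: expand at $v_0$, classify the entering subtrees, and use the exponential decay coming from scale-$1$ lines (via $|\nn|\ge q_n\ge C_0^{-1}\log\e^{-1}$) to kill the dangerous factors $\eta^{-\gotn+1}$. The paper carries this out with the full six-way split $(m,p,l,r,s,u)$ and an explicit exponent $\Delta_2$, while you use a coarser four-type split and the simplified bound \eqref{eq:8.20}; both lead to the same estimate.

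However, your Case~A has a genuine gap. You write that ``each remaining subtree contributes at least $\eta^{\alpha k_j}$'' and justify this only for types (ii) and (iv). But nothing in Case~A prevents some of the remaining subtrees from being of type~(iii): a scale-$1$ end node gives weight $\rho C\,\eta^{-\gotn+1}{\rm e}^{-\xi|\nn_{v_j}|}$, and $\eta^{-\gotn+1}$ is \emph{not} bounded by $\eta^{\alpha}$. Your power-counting identity $1+\alpha(k-1)+(\gotn^2-1)/\gotn\ge\alpha k+\gotn$ therefore fails as soon as a single type-(iii) line is present alongside the type-(i) subtree. The fix is exactly the one you already use in Case~B: for each such line, $\eta^{-\gotn+1}{\rm e}^{-\xi|\nn_{v_j}|}\le\eta^{-\gotn+1}\e^{\xi/C_0}$ is a positive power of $\eta$ (indeed $\ge\eta^{\alpha}$) once $C_0$ is taken as in \eqref{eq:2.4}. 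You must invoke this in Case~A as well; the paper does so systematically through the $s$-dependent part of $\Delta_2$ together with the factor ${\rm e}^{-\frac{\xi}{4}q_n s}$.

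A second, smaller point: your recursive use of the present lemma for a type-(i) subtree $\vartheta_j$ with $\nn_{\ell_j}=\vzero$ is not quite justified, because such a subtree need not satisfy the hypothesis of Lemma~\ref{lem:9.1} (its own first node could have all entering lines of scale~$0$ exiting end nodes). In that degenerate sub-case you must bound $\Val(\vartheta_j)$ directly from Lemma~\ref{lem:8.1}, which is easy since every factor is already a positive power of $\eta$; but it should be said explicitly rather than swept into ``only improves the bound''.
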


\begin{proof}
The subtrees which enter the first node $v_0$ of $\vartheta$ are arranged as described at the beginning of the proof of Lemma \ref{lem:8.2}.
Their values are bounded according to Lemma \ref{lem:8.1} and Lemma \ref{lem:8.2}, so that, using
that the root line has momentum $\nn=0$ and the corresponding propagator is $G_{\ell}=1$, one can bound
\[
\begin{split}
|\Val^*(\vartheta)| &\le \Gamma \rho^{-(m+p+l+r+s)}\rho^{m+p+l+r+s}C^{k-1} 
\eta^{\frac{k-r-s-u-1}{\gotn(\gotn+1)} + \frac{\gotn^{2}-1}{\gotn}(m+p+l)} \e^{\frac{\gotn}{\gotn+1}r} \times \\
& \qquad \times \eta^{(-\gotn+1)(l+s)} |\zeta|^{u}  {\rm e}^{-\frac{\xi}{4} 
\sum_{j=1}^{m} |\nn_{\ell_{j}}|} {\rm e}^{-\frac{\xi}{4}q_{n}(p+l)} {\rm e}^{-\frac{\xi}{2} 
\big(\sum_{j=1}^{r} |\nn_{\tilde{\ell}_{j}}| +\sum_{j=1}^{s} |\nn_{\tilde{\ell}'_{j}}|\big)}\\
&\le \rho \Gamma C^{k} \bar{C}^{-1}
\eta^{\frac{k}{\gotn(\gotn+1)} +\Delta_{2}(m,p,l,r,s,u) + \gotn} 
{\rm e}^{-\frac{\xi}{4}q_{n}(p+l+s)} {\rm e}^{-\frac{\xi}{4} q_n s } ,
\end{split}
\]
with $C$ as in \eqref{eq:8.2a}, $\bar{C}:= \rho \, C \ge 1$ and
\[
\begin{split}
\Delta_{2}(m,p,l,r,s,u) 
:= \frac{(m \!+\! p \!+\! l \!+\! r \!+\! u)(\gotn^{2}-1) \!+\! (m+p)(\gotn^{3}-\gotn) \!+\! \gotn u
\!-\! s(\gotn^{3} \!-\! \gotn \!+\! 1) \!-\! \gotn^3 \!-\! \gotn^2 \!-\! 1 }{\gotn(\gotn+1)} . 
\end{split}
\]
Since $m+p+l+r+u+s \ge \gotn$, one has
\[
\Delta_{2}(m,p,l,r,s,u) \ge 
- \frac{(\gotn^{2} +  \gotn + 1 ) + m (\gotn^3 - \gotn ) + u \, \gotn}{\gotn(\gotn+1)} -  \frac{s(\gotn^{2}+\gotn-1)}{\gotn+1} ,
\]
and, taking $C_{0}$ as in \eqref{eq:8.18} and $\eta$ small enough, one bounds
\[
\eta^{- \frac{\gotn^{2}+\gotn-1}{\gotn+1}} {\rm e}^{-\frac{\xi}{4} q_n } 
\le \eta^{- \frac{\gotn^{2} + \gotn -  1}{\gotn+1} + \frac{\xi}{4C_{0}}} \le1, \qquad
\eta^{- \frac{\gotn^{2}+\gotn+1}{\gotn(\gotn+1)}} {\rm e}^{-\frac{\xi}{4} q_n } 
\le \eta^{- \frac{\gotn^{2} + \gotn + 1}{\gotn(\gotn+1)} + \frac{\xi}{4C_{0}}} \le1,
\]
which implies the assertion as long as $p+l+s\ge 1$. If $p+l+s=0$ and $m=1$, one estimates $m(\gotn^3-\gotn) -
(\gotn^2 + \gotn +1) \ge \gotn^3 - \gotn^2 - 2 \gotn - 1 \ge 0$ for $\gotn \ge 3$, so that the result follows once more. Finally,
if $m+p+l+s=0$, one has $p_{v_0}=u+r$, so that all the lines entering $v_0$ have scale 0 and exit an end node.
\end{proof}

\begin{lemma}
\label{lem:9.2}
The function $\calH(\e,\zeta)$ is $C^{\gotn}$ with respect to $\zeta$. 
\end{lemma}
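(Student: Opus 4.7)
The plan is to split the tree expansion \eqref{eq:9.2} into two pieces and handle each separately. First, I would isolate the contribution of \emph{trivial} trees -- those for which the first node $v_0$ has all $p_{v_0}$ entering lines on scale $0$ and exiting end nodes, so that the hypothesis of Lemma \ref{lem:9.1} fails. Summing these yields
\begin{equation} \nonumber
\calH_{\rm triv}(\e,\zeta) = \sum_{p\ge \gotn} g_p(c) \bigl[ (\zeta + \bar u^{[1]}(\cdot;\e))^p \bigr]_{\vzero} ,
\end{equation}
where $\bar u^{[1]}(\pps;\e) := \e\sum_{\nn\ne\vzero} {\rm e}^{\ii\nn\cdot\pps} f_\nn G^{[0]}(\oo\cdot\nn)$ is independent of $\zeta$; this is an analytic function of $\zeta$ on a disk of positive radius (since $g$ is analytic), in particular $C^\gotn$.

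For the remainder $\calH_{\rm rest}$, Lemma \ref{lem:9.1} gives $|\Val^{*}(\vartheta)| \le \rho\Gamma C^k \eta^{\alpha k + \gotn}$ for each contributing tree, providing an $\eta^\gotn$ margin. I would then differentiate term by term up to $\gotn$ times in $\zeta$. The $\zeta$-dependence of $\Val^{*}(\vartheta)$ enters only through the end-node factors $F_v = \zeta$ (for $v \in E_0(\vartheta)$) and through the self-energy $\MM(\oo\cdot\nn;\e,\zeta,c)$ inside the scale-$1$ propagators. Using the quotient rule on $G^{[1]} = \chi/D_1$, the lower bound $|D_1| \ge A\,\e\,\eta^{\gotn-1}$ from Lemma \ref{lem:7.2}, and an inductive estimate on $\partial_\zeta^j \MM$ obtained by applying the same tree machinery to self-energy clusters, one shows that each $\zeta$-differentiation costs at most a factor $O(\eta^{-1})$ in the bound; hence $|\partial_\zeta^j \Val^{*}(\vartheta)| \le \mathrm{const}(\gotn)\,\rho\Gamma\, C^k \eta^{\alpha k + \gotn - j}$ for $0 \le j \le \gotn$, and the resulting series converges uniformly on a small real neighbourhood of $\zeta = 0$, justifying term-by-term differentiation and giving the $C^\gotn$ regularity.

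The main obstacle is the inductive estimate on $\partial_\zeta^j \MM$: since $\MM$ is itself a renormalised tree sum whose values depend on $\zeta$ through exactly the same two channels, one has to re-run the proof of Lemma \ref{lem:7.2} while tracking derivatives via the Leibniz rule. The key observation here is that the leading contribution $\MM_\gotn(\oo\cdot\nn)$ is polynomial in $\zeta$ of degree $\gotn-1$ with a non-vanishing coefficient of $\zeta^{\gotn-1}$, so its first $\gotn-1$ derivatives in $\zeta$ are controlled directly, while the correction $\Delta\MM$ is of higher order in $\eta$ (as already estimated in the proof of Lemma \ref{lem:7.2}) and therefore survives the additional differentiations without destroying the lower bound on the propagators.
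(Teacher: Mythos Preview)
Your approach is essentially the same as the paper's: split off the trivial trees (where all lines entering $v_0$ are on scale $0$ and exit end nodes), use Lemma~\ref{lem:9.1} to get the $\eta^{\gotn}$ margin on the rest, and show that each $\zeta$-derivative costs at most a factor $\eta^{-1}$ by combining the Leibniz rule on node factors with the quotient rule on the scale-$1$ propagators $G^{[1]}=\chi/D_1$.

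One point you over-complicate: the bound on $\partial_\zeta^j \MM$ is \emph{not} inductive. By definition (see \eqref{eq:6.5a}--\eqref{eq:6.5c} and Remark~\ref{rmk:6.6}) every line in a self-energy cluster is on scale $0$, so the propagators $G^{[0]}$ contributing to $\MM$ are independent of $\zeta$; the $\zeta$-dependence of $\MM$ enters \emph{only} through the node factors $F_v=\zeta$ for $v\in E_0(T)$. Hence $\partial_\zeta^j \MM$ is obtained by differentiating a polynomial-type expression in $\zeta$, and one gets directly $|\partial_\zeta^j\MM(x)|\le b_j\,\e\,\eta^{\gotn-j-1}$ for $1\le j\le\gotn$ from the bounds of Lemmas~\ref{lem:7.1} and~\ref{lem:7.2}, with no need to ``re-run'' anything. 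This is exactly how the paper proceeds, and it removes what you flag as the main obstacle.
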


\begin{proof}
By \eqref{eq:9.2}, it is sufficient to prove that $\Val^*(\vartheta)$ is $C^{\gotn}$ in $\zeta$ for all $\gotT_{k, \vzero}$.
Note that $\Val^*(\vartheta)$ depends on $\zeta$ through the node factors and through the propagators associated
with the lines on scale $1$ -- see \eqref{eq:6.3} and \eqref{eq:6.4b}.
For $j=1,\ldots,\gotn$, one has
\begin{equation} \label{eq:9.3} 
\partial^j_{\zeta} \Val^*(\vartheta) = \sum_{j_1+j_2=j}
\Big[ \Bigl( \partial^{j_1}_{\zeta} \Big(\prod_{v \in N(\vartheta)} F_{v}\Big) \Bigr)
\partial^{j_2} _{\zeta}\Big(\prod_{\ell \in L(\vartheta)} G_{\ell}\Big)\Big] .
\end{equation}
%
%
%

The derivatives acting on the node factors are easily controlled:
\begin{equation} \nonumber
\Big|\partial^{j_1}_{\zeta} \Big(\prod_{v \in N(\vartheta)} F_{v} \Big) \Big| 
\le |E_{0}(\vartheta)| (|E_{0}(\vartheta)|-1) \cdots (|E_{0}(\vartheta)|-j_1 +1)
|\zeta|^{ |E_{0}(\vartheta)| - j_1} \!\!\!\!\!\! \prod_{v \in N(\vartheta) \setminus E_{0}(\vartheta)} \bigl| F_{v} \bigr| .
\end{equation}
%
%
%
	
Dealing with the derivatives acting on the propagators is more delicate.
Consider first the trees such that at least one line entering the first node either has scale 1 or does not exit an end node.
Each propagator $G^{[1]}(\oo\cdot\nn)$ can be differentiated at most $j_2$ times.
For $p \le j_2 \le \gotn$, if one sets $x=\oo\cdot\nn$ and
\begin{equation} \nonumber
\mathcal{A}_{p,k} :=\{i_{1},\dots, i_{k} \in \NNN :  
i_{1}\ge i_{2}\ge \dots\ge  i_{k}\ge 1 \hbox{ and } i_{1}+\dots+ i_{k}=p \} ,
\end{equation}
one has
\begin{equation} \label{eq:9.4} 
	%
\partial^{p}_{\zeta} G^{[1]}(x) = \chi(|x|) \sum_{k=1}^{p} \sum_{i_{1},\dots, i_{k} \in \mathcal{A}_{p,k} } 
a_{i_{1},\dots,i_{k}} \frac{ (\partial^{i_{1}}_{\zeta} \MM(x) ) ( \partial^{i_{2}}_{\zeta} \MM(x)) \ldots
( \partial^{i_{k}}_{\zeta} \MM(x) ) }{(D_{1}(x))^{k+1}}, 
\end{equation}
for suitable combinatorial coefficients $a_{i_{1},\dots,i_{k}}$.  Note that in \eqref{eq:9.4}, one has $\chi(|x|)/D_1(x)=G^{[1]}(x)$.
%
Since the lines of the self-energies contributing to $\MM(x)$ are on scale $0$ (see \eqref{eq:6.5a}), in each factor
$\partial^{i_{s}}_{\zeta} \MM(x)$, with $s=1,\ldots,p-k$, appearing in \eqref{eq:9.4}
the derivatives with respect to $\zeta$ act only on the node factors.  
By using the bounds of  Lemma \ref{lem:7.1} and Lemma \ref{lem:7.2}, one finds
\[
|\partial^{j}_{\zeta}\MM(x)| \le b_{j} \e \Bigl( |\zeta|^{\gotn- j - 1}  + \eta^{\frac{2\gotn^{2}-j(\gotn+1)-\gotn}{\gotn+1}} \Bigr) ,
\quad j=1,\ldots,\gotn-1,
\qquad
|\partial^{\gotn}_{\zeta}\MM(x)| \le b_{\gotn} \e \Bigl( 1+ \eta^{\frac{\gotn^{2}-2\gotn}{\gotn+1}} \Bigr)  ,
\]
for suitable positive constants $b_{1},\dots,b_{\gotn}$; in each bound, the first contribution arises from resonances
that contain only one internal node, while the second one takes into account the resonances with at least two internal nodes.
Hence, to sum up, 
one has
\begin{equation} \nonumber
%
|\partial^{j}_{\zeta}\MM(x)| \le 2b_{0} \e \eta^{\gotn-j-1}, \quad j=1,\dots,\gotn,\qquad b_0:=\max\{ b_1, \ldots, b_{\gotn} \} ,
\end{equation}
which can be used to bound the derivatives in \eqref{eq:9.4} so as to give
\begin{eqnarray} \nonumber 
& & \Biggl|\frac{(\partial^{i_{1}}_{\zeta}\MM(x))(\partial^{i_{2}}_{\zeta}\MM(x))\dots(\partial^{i_{k}}_{\zeta}\MM(x))}{(D_{1}(x))^{k}}\Biggr| 
\le \frac{(2b_0\e)^k \eta^{(\gotn-1)k-(i_1+\ldots+i_k)}}{(A\e \eta^{\gotn-1})^k} \le
\Bigl( \frac{2b_0}{A} \Bigr)^ k \eta^{-p} ,
\end{eqnarray}
for all values of $k$ and $p$.
%
%
Coming back to \eqref{eq:9.3}, one can write
\[
\partial^{j_2} _{\zeta}\Big(\prod_{\ell \in L(\vartheta)} G_{\ell}\Big)  = \sum_{r=1}^{j_2}
\sum_{p_1+\ldots+p_r = j_2} 
\partial_{\zeta}^{p_{1}}G^{[1]}(\oo\cdot\nn_{\ell_{1}}) \ldots 
\partial_{\zeta}^{p_{r}}G^{[1]}(\oo\cdot\nn_{\ell_{r}})
\Big( \prod_{\ell \in L(\vartheta)\setminus \{\ell_1,\ldots,\ell_r\}} G_{\ell} \Big),
\]
where each $\partial_{\zeta}^{p_{i}} G^{[1]}(\oo\cdot\nn_{\ell_{i}})$, $i=1,\ldots,r$, can be dealt with according
to \eqref{eq:9.4}: it is given by a sum of terms, each of which contains $r$ factors which admit the same bound as $G^{[1]}(\oo\cdot\nn)$
times a factor which can be bounded proportionally to $\eta^{-p_{i}}$.
The number of summands arising from the sums over the choice of the lines 
which are differentiated (sum over $r$ and $p_1,\ldots,p_r$) and from the sums in \eqref{eq:9.4} is bounded by $K_2^k$,
for a suitable constant $K_2$. In conclusion, one has
\begin{equation} \nonumber 
\Bigl| \partial^{j_2} _{\zeta} \Big(\prod_{\ell \in L(\vartheta)} G_{\ell}\Big) \Bigr| \le K_2^k \eta^{ -j_2}
\Bigl| \prod_{\ell \in L(\vartheta)} G_{\ell} \Bigr| .
\end{equation}
which, together with the bound \eqref{eq:9.3} and Lemma \ref{lem:9.1}, implies, for another suitable constant $K_1$,
\begin{equation} \nonumber
\Bigl( \partial^{j_1}_{\zeta} \Big(\prod_{v \in N(\vartheta)} F_{v}\Big) \Bigr)
\partial^{j_2} _{\zeta}\Big(\prod_{\ell \in L(\vartheta)} G_{\ell}\Big) \le
\eta^{-j} K_1^{k} K_2^k
\max \{ |\Val^*(\vartheta)| : \vartheta \in \TT_{k,\vzero} \} \le
\rho \, \Gamma \, (C K_1 K_2)^{k} \eta^{\alpha k + \beta} ,
\end{equation}
which shows that the first $\gotn$ derivatives are bounded, provided $\eta$ is small enough.

We are left with the trees $\theta\in\TT_{k,\vzero}$ for which, by using the notation introduced at the beginning of the proof
of Lemma \ref{lem:8.2}, one has $m+p+l+s=0$, so that $u+r \ge \gotn$: all the subtrees entering the first node $v_0$
of $\vartheta$ have order 1 and root line on scale 0. The only dependence on $\zeta$ is through the factors $\zeta$ associated
to the end nodes in $E_0(\vartheta)$. This means that the derivative $\partial_\zeta^j \Val^*(\vartheta)$ is not zero if and only if
$u \ge j$ and in such a case one finds
\begin{equation} \nonumber
\left| \partial_{\zeta}^j  \Val^*(\vartheta) \right|  \le \rho K_1^j \Gamma C^{u+r} |\zeta|^{u-j} \e^{\frac{\gotn r}{\gotn+1}} ,
\end{equation}
where the bounds of Lemma \ref{lem:8.1} have been used.
Thus the first $\gotn$ derivatives are bounded in such a case as well.
\end{proof}

\begin{lemma}
	\label{lem:9.3}
	Let $J_N(C_1,C_0)$ as in Section \ref{sec:8}.
	There exists a neighbourhood $\calU \times \calV$ of $(\e, \zeta)=(0,0)$ such that for all $\e \in \calU \cap J_N(C_1,C_0)$
	there is at least one value $\zeta=\zeta(\e) \in \calV$,
	depending continuously on $\e$, which solves the bifurcation equation \eqref{eq:4.3}.
\end{lemma}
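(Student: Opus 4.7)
The plan is to solve the bifurcation equation $\calH(\e,\zeta)=0$ by applying the intermediate value theorem to the scalar continuous function $\zeta\mapsto\calH(\e,\zeta)$, exploiting that the order $\gotn$ of the zero is odd. Because $\gotn>1$ one has $\partial_\zeta\calH(0,0)=0$ and the implicit function theorem is unavailable, so only continuity of the branch can be obtained; arranging this continuity on $\calU\cap J_N(C_1,C_0)$---which in general contains holes accumulating at $\e=0$---will be the most delicate point.

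First, I would use the Whitney extension $\tilde u$ of $\bar u$ provided by Lemma \ref{lem:8.6} to extend $\calH$ to all $\e\in[0,\e_0]$ by setting
\[
\tilde\calH(\e,\zeta) := [g(c+\zeta+\tilde u(\cdot;\e,\zeta,c))]_\vzero - f_\vzero .
\]
This agrees with $\calH$ on $J_N(C_1,C_0)\times[-\zeta_0,\zeta_0]$, is continuous in $\e$ by Lemma \ref{lem:8.6}, and is continuous in $\zeta$ by the analyticity of $g$. Since $\tilde u(\cdot;0,\zeta,c)\equiv 0$, Hypothesis \ref{hyp:1} and the expansion \eqref{eq:2.2} give
\[
\tilde\calH(0,\zeta) = g(c+\zeta) - f_\vzero = g_\gotn\,\zeta^\gotn + O(|\zeta|^{\gotn+1}) .
\]
As $\gotn$ is odd and $g_\gotn\neq 0$, I can fix $\zeta^*>0$ so small that $\tilde\calH(0,\zeta^*)$ and $\tilde\calH(0,-\zeta^*)$ have opposite signs. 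For $\e>0$ as well, combining the above with the bound of Lemma \ref{lem:9.1} on the tree contributions in which at least one line entering the first node either has scale $1$ or does not exit an end node yields $\tilde\calH(\e,\zeta)=g_\gotn\,\zeta^\gotn+O(|\zeta|^{\gotn+1})+O(\eta^{\gotn+\alpha})$, which makes the sign pattern robust under small perturbations of $\e$.

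By continuity of $\tilde\calH$ in $\e$, the opposite-sign property persists on some interval $[0,\e_1]$. Setting $\calU:=[0,\e_1)$ and $\calV:=(-\zeta^*,\zeta^*)$, the intermediate value theorem applied to the continuous map $\zeta\mapsto\tilde\calH(\e,\zeta)$ then produces, for every $\e\in\calU$, at least one $\zeta(\e)\in\calV$ with $\tilde\calH(\e,\zeta(\e))=0$; on $\calU\cap J_N(C_1,C_0)$ one has $\tilde\calH=\calH$, so $\zeta(\e)$ solves the bifurcation equation \eqref{eq:4.3}.

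The hard part will be extracting a continuous selection $\e\mapsto\zeta(\e)$ on $\calU\cap J_N(C_1,C_0)$. By repeating the above construction with arbitrarily small values of $\zeta^*$ one sees that the zero set of $\tilde\calH(\e,\cdot)$ in $\calV$ shrinks to $\{0\}$ as $\e\to 0$, which forces any selection to satisfy $\zeta(\e)\to 0$. Continuity of the selection on $\calU\cap J_N(C_1,C_0)$ then follows from the closed-graph property of the zero set of the continuous function $\tilde\calH$ on $\calU\times\overline\calV$, together with a standard selection argument---for instance, choosing $\zeta(\e)$ to be the zero of smallest modulus and invoking the upper semicontinuity of this multivalued map---which completes the proof.
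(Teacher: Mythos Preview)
Your approach is essentially the same as the paper's: both exploit Hypothesis~\ref{hyp:1} (odd $\gotn$) to obtain opposite signs of $\calH(0,\cdot)$ at two points $V_\pm$, propagate this to small $\e$ by continuity, and apply the intermediate value theorem. The paper is slightly more economical in that it only invokes (Whitney) continuity of $\e\mapsto\calH(\e,V_\pm)$ at the two fixed endpoints, rather than extending $\calH$ to all $\e$ via the Whitney extension $\tilde u$; your invocation of Lemma~\ref{lem:9.1} to exhibit the size of the correction is not needed for the bare sign argument, though it is informative.

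There is, however, a gap in your final step. Upper semicontinuity of the zero-set correspondence $\e\mapsto\{\zeta:\tilde\calH(\e,\zeta)=0\}$ does \emph{not} in general produce a continuous single-valued selection, and in particular the zero of smallest modulus can jump discontinuously when a new zero appears away from the existing one (think of $\tilde\calH(\e,\zeta)=(\zeta-\e)(\zeta^2+\tfrac12-\e)$ near $\e=\tfrac12$). Your shrinking-$\zeta^*$ argument does correctly force any selection to satisfy $\zeta(\e)\to0$ as $\e\to0$, i.e.\ continuity at the origin, but continuity at interior points of $\calU\cap J_N(C_1,C_0)$ is not justified by what you wrote. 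The paper's own proof is equally terse here (``by continuity, there exists a curve''), so this is not a divergence in strategy but a point on which both arguments are informal; if one wants a genuinely continuous branch, one should bring in the extra regularity of $\calH$ in $\zeta$ provided by Lemma~\ref{lem:9.2}, together with the specific form $g_\gotn\zeta^\gotn+\hbox{h.o.t.}$ of the leading part, rather than rely on a generic selection principle.
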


\begin{proof}
We consider explicitly the case $g_{\gotn} > 0$ only, since the case $g_{\gotn}< 0$ is discussed in the same way.
By Hypothesis \ref{hyp:1}, the function $\calH(0,\zeta)$ has in $\zeta=0$ a rising point of inflection.
In particular, since $\calH(0,0)=0$, there exists an interval $\calV=[V_-,V_+]$ such that  $\calH(0,V_{+})> 0$ and $\calH(0, V_{-})<0$. 
	
Now we look at $\calH(\e,V_{-})$ and $\calH(\e,V_{+})$ as  functions of $\e$: they are both continuous (in the sense of Whitney)
in a neighbourhood of $\e=0$, so that there exists an interval $\calU$ such that  
$\calH(\e,V_{-})<0$ and $\calH(\e,V_{+})>0$ for all $\e \in \calU$ for which the functions are defined.
	
Since for all $\e \in \calU$ the function $\calH(\e,\zeta)$ is defined for $\zeta\in \calV$
and one has $\calH(\e,V_{-})<0$ and $\calH(\e,V_{+})>0$, then, by continuity, there exists a curve 
$\e \in J_N(C_1,C_0) \mapsto \zeta(\e) \in \calV$ such that $\calH(\e,\zeta(\e))=0$. 
\end{proof}

%

\zerarcounters 
\section{Conclusions}
\label{sec:10}

Theorem \ref{thm:1} is a bit unsatisfactory, since an arbitrary constant $C_1$ is involved,
in terms of which the maximal value allowed for $\e$ is expressed. Moreover the sets $I_n(C_1,C_0)$
are all fixed once and for all when the value of $C_1$ is given, whereas one can imagine that one can reduce
the sizes of the holes by taking a different value of $C_1$ for each interval $I_n(C_1,C_0)$.

In fact, one can improve the construction envisaged in Section \ref{sec:2} as follows.
First of all note that $C_0$ is defined uniquely according to \eqref{eq:2.4}.
Then one fix the constant $C_1$ and the first interval $I_N(C_1,C_0)$ as follows.
Define $I_N(C',C)$ as in \eqref{eq:2.3}: for $\e\in I_N(C_1,C_0)$ the renormalised series converges
provided the condition $|\e|<\e_0:=\eta_0 C_1^{\gotn(\gotn-1)}$ is satisfied, which requires
\begin{equation} \nonumber 
\frac{1}{(C_1 q_N)^{\gotn + 1}} \le \eta_0 C_1^{\gotn(\gotn-1)} .
\end{equation}
This suggests taking $C_1$ as the value for which the equal sign holds, so that one finds
\begin{equation} \label{eq:10.1}
C_1 = C_1^* := \Bigl( \frac{1}{a_0 q_N} \Bigr)^{\frac{1}{\gotn + 1 }} , \qquad a_0 := \eta_0^{\frac{1}{\gotn + 1 }} ,
\end{equation}
and the first interval becomes
\begin{equation} \nonumber 
\gotI_{0} := I_{N}(C_1^*,C_0) = \Bigl[ {\rm e}^{-C_0 q_N } , \frac{a_0}{q_N^{\gotn}} \Bigl] .
\end{equation}
The value of $N$ is still undetermined: we choose $N$ by requiring that
\begin{equation} \label{eq:10.2}
{\rm e}^{-C_0 x } \le \frac{a_0}{x^{\gotn}} \quad \forall x \ge q_N .
\end{equation}
This means that $q_N$ is chosen as the lowest denominator of the convergents for which \eqref{eq:10.2} is satisfied.

Next, we consider the interval $I_{N+1}(C_2,C_0)$: for $\e\in I_{N+1}(C_2,C_0)$ the renormalised series converges provided
one has $|\e|<\e_1:=\eta_0 C_2^{\gotn(\gotn-1)}$. Again, in order to maximise the size of the interval, we fix $C_2$ by requiring
\begin{equation} \nonumber 
\frac{1}{(C_2 q_{N+1})^{\gotn + 1}} \le \eta_0 C_2^{\gotn(\gotn-1)} .
\quad \Longrightarrow \quad
C_2 = C_2^* := \Bigl( \frac{1}{a_0 q_{N+1}} \Bigr)^{\frac{1}{\gotn + 1 }} ,
\end{equation}
and set
\begin{equation} \nonumber 
\gotI_{1} := I_{N+1}(C_2^*,C_0) = \Bigl[ {\rm e}^{-C_0 q_{N+1} } , \frac{a_0}{q_{N+1}^{\gotn}} \Bigl] .
\end{equation}
Note that $C_2^*> C_1^*$, so that $\gotI_{1} \supset I_{N+1}(C_1,C_0)$.

We iterate the construction above by defining
\begin{equation} \label{eq:10.3}
\gotI_{n} := \Bigl[ {\rm e}^{-C_0 q_{N+n} } , \frac{a_0}{q_{N+n}^{\gotn}} \Bigl] , \qquad
\gotJ = 
{\color{black}
\mbox{Cl} 
}\Bigl( \bigcup_{n=0}^{+\io} \gotI_n \Bigr) .
\end{equation}
For all $\e\in \gotJ$ the renormalised series converges and, by reasoning as in the 
previous sections, one proves that one can fix $\zeta$ as a function of $\e$ in such a way that
both the bifurcation and the range equations are satisfied.

We summarise the discussion above in the following statement.

\begin{thm} \label{thm:2}
Consider the ordinary differential equation \eqref{eq:1.1}, with $f$ analytic in the strip $\Sigma_{\xi}$ and $\oo=(1, \alpha)$,
and assume Hypotheses \ref{hyp:1} and \ref{hyp:2}. Denote by $p_{n}/q_{n}$ the convergents of $\alpha$,
and let $C_{0}$ be as in \eqref{eq:2.4} and $q_N$ such that \eqref{eq:10.2} is satisfied, with $a_0$ as in \eqref{eq:10.1}.
Then for all $\e \in \gotJ$ there is at least one quasi-periodic solution $x(t,\e)= c + X(\oo t, \e)$ to \eqref{eq:1.1}, such that
$X(\pps, \e)$ is analytic in $\pps$ in the strip $\Sigma_{\xi'}$, with $\xi'<\xi/4$, is continuous in $\e$ in the sense of Whitney,
and goes to zero as $\e \to 0$. 
\end{thm}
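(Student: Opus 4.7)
The plan is to iterate the proof of Theorem \ref{thm:1}, choosing for each interval the optimal value of the constant $C_1$ (the one which maximises the length of that interval) and then gluing the resulting intervals into a single set $\gotJ$ on which the response solution exists. The value $C_0$ is fixed once and for all as in \eqref{eq:2.4}, since this is the only choice that closes the bounds of Lemma \ref{lem:8.2} and it does not involve $C_1$.

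For each $n\ge 0$ we take $C_{n+1}^{\ast} := (a_0 q_{N+n})^{-1/(\gotn+1)}$, where $a_0:=\eta_0^{1/(\gotn+1)}$ and $\eta_0$ is the smallness threshold on $\eta$ delivered by Corollary \ref{coro:8.4} and Lemma \ref{lem:9.3}. A direct computation shows that, with this choice, $I_{N+n}(C_{n+1}^{\ast},C_0)$ coincides with the interval $\gotI_n$ defined in \eqref{eq:10.3}, and that the right endpoint $a_0/q_{N+n}^{\gotn}$ equals the largest value of $|\e|$ allowed by Corollary \ref{coro:8.4} for $C_1=C_{n+1}^{\ast}$. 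The compatibility condition ensuring $\gotI_n\ne\emptyset$, namely ${\rm e}^{-C_0 q_{N+n}}\le a_0/q_{N+n}^{\gotn}$, is precisely \eqref{eq:10.2} evaluated at $x=q_{N+n}$; since the sequence $\{q_n\}$ is strictly increasing (Proposition \ref{prop:5.2}(1)), the single requirement that $N$ satisfy \eqref{eq:10.2} takes care of every $n\ge 0$.

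Once this setup is in place, the range equation analysis of Section \ref{sec:8} and the bifurcation analysis of Section \ref{sec:9} go through on every $\gotI_n$ with $C_1=C_{n+1}^{\ast}$ without any modification: Lemma \ref{lem:8.3} delivers the convergence of the renormalised series, Lemma \ref{lem:8.5} shows that it solves the range equation, and Lemma \ref{lem:9.3} produces a $\zeta(\e)\to 0$, continuous in the sense of Whitney, that solves the bifurcation equation. The quasi-periodic solution on $\gotJ$ is then obtained by taking the union over $n$ and extending continuously in the sense of Whitney as in Lemma \ref{lem:8.6}, paying attention that the Whitney extension is consistent at the endpoints of adjacent intervals produced by distinct choices of $C_{n+1}^{\ast}$.

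The main obstacle will be to verify that the smallness threshold $\eta_0$ can be chosen \emph{uniformly} in $C_1$ as $C_1$ ranges over the sequence $\{C_{n+1}^{\ast}\}$, which tends to $0$ as $n\to\infty$. If $\eta_0$ shrank with $C_1$, the construction would fail past some finite $n$. However, the comment following \eqref{eq:8.20} asserts that the constant $C$ in \eqref{eq:8.2a} takes the form $C=a_1 C_1$ with $a_1$ independent of $C_1$ whenever $C_1\le A$ (with $A$ the constant of Lemma \ref{lem:7.2}); hence the combinatorial constant $B$ of Lemma \ref{lem:8.3} depends on $C_1$ only through a prefactor $C_1^{\gotn(\gotn+1)}$ that is already absorbed into the definition of $\eta_0$. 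Once this uniformity is confirmed, all the other steps are direct adaptations of the proof of Theorem \ref{thm:1}.
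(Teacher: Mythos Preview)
Your proposal is correct and follows essentially the same approach as the paper: optimise the constant $C_1$ separately on each interval by setting $C_{n+1}^{\ast}=(a_0 q_{N+n})^{-1/(\gotn+1)}$, verify that \eqref{eq:10.2} guarantees $\gotI_n\neq\emptyset$ for all $n\ge0$, and then invoke the machinery of Sections \ref{sec:8}--\ref{sec:9} on each $\gotI_n$. Your explicit check that the $C_1$-independent part of the smallness threshold is genuinely uniform (so that the construction does not collapse as $C_{n+1}^{\ast}\to 0$) is a point the paper leaves implicit; note only that the relation quoted after \eqref{eq:8.20} should read $C=a_1/C_1$ rather than $C=a_1 C_1$, but your conclusion that the threshold on $\eta$ scales as $C_1^{\gotn(\gotn+1)}$ is the correct one and is exactly what makes the optimisation in \eqref{eq:10.1} work.
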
 

The intervals \eqref{eq:10.3} overlap without leaving holes if and only if one has
\begin{equation} \label{eq:10.4}
q_{n+1} \le b_0 {\rm e}^{(C_0/\gotn) q_n } \qquad \forall n \ge N , \qquad b_0:= \eta_0^{\frac{1}{\gotn(\gotn+1)}} .
\end{equation}
This leads to the following result.

\begin{thm} \label{thm:3}
Consider the ordinary differential equation \eqref{eq:1.1}, with $f$ analytic in the strip $\Sigma_{\xi}$ and $\oo=(1, \alpha)$,
and assume Hypothesis \ref{hyp:1}. Denote by $p_{n}/q_{n}$ the convergents of $\alpha$,
and let $C_{0}$ be as in \eqref{eq:2.4}. If the convergents of $\al$ satisfy \eqref{eq:10.4},
then for all $\e < a_0q_N^{-\gotn}$, with $a_0$ as in \eqref{eq:10.1},
there is at least one quasi-periodic solution $x(t,\e)= c + X(\oo t, \e)$ to \eqref{eq:1.1}, such that
$X(\pps, \e)$ is analytic in $\pps$ in the strip $\Sigma_{\xi'}$, with $\xi'<\xi/4$, is continuous in $\e$,
and goes to zero as $\e \to 0$. 
\end{thm}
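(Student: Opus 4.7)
The plan is to reduce Theorem \ref{thm:3} to Theorem \ref{thm:2} by verifying that the arithmetic assumption \eqref{eq:10.4} forces the union $\gotJ$ defined in \eqref{eq:10.3} to cover the whole interval $(0, a_0 q_N^{-\gotn}]$ without leaving any hole. Once this is established, the existence of the response solution follows at once from Theorem \ref{thm:2}, and the Whitney continuity reduces to ordinary continuity because the set on which the solution is defined has no holes.

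First, I would recall that Theorem \ref{thm:2} guarantees, for every $\e \in \gotJ$, a quasi-periodic solution $x(t,\e) = c + X(\oo t, \e)$ with the required regularity. Since the left endpoints $\mathrm{e}^{-C_0 q_{N+n}}$ of the intervals $\gotI_n$ tend to $0$ as $n\to\io$, it suffices to show that consecutive intervals $\gotI_n$ and $\gotI_{n+1}$ overlap for every $n\ge 0$. Writing the overlap condition as
\[
\frac{a_0}{q_{N+n+1}^{\gotn}} \ge \mathrm{e}^{-C_0 q_{N+n}} ,
\]
and taking $\gotn$-th roots after rearranging, this is equivalent to
\[
q_{N+n+1} \le a_0^{1/\gotn} \, \mathrm{e}^{(C_0/\gotn) q_{N+n}} .
\]
Since $a_0 = \eta_0^{1/(\gotn+1)}$ by \eqref{eq:10.1}, one has $a_0^{1/\gotn} = \eta_0^{1/(\gotn(\gotn+1))} = b_0$, so the overlap condition coincides exactly with \eqref{eq:10.4} applied to the index $n' = N + n \ge N$. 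By hypothesis this holds for all $n \ge 0$, hence every pair $\gotI_n, \gotI_{n+1}$ overlaps and $\gotJ$ fills the half-open interval $(0, a_0 q_N^{-\gotn}]$.

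Consequently, for every $\e \in (0, a_0 q_N^{-\gotn}]$ Theorem \ref{thm:2} produces a response solution $x(t,\e) = c + X(\oo t,\e)$, analytic in $\pps$ in the strip $\Sigma_{\xi'}$ with $\xi'<\xi/4$. Because the domain of $\e$ now contains no holes, the Whitney extension provided by Lemma \ref{lem:8.6} coincides with the ordinary continuous extension; indeed, the bounds obtained there actually yield H\"older continuity with exponent $\gotn/(\gotn+1)$ in $\e$, and in particular $X(\pps,\e) \to 0$ as $\e\to 0^+$.

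The only real work is the arithmetic matching of the constants $a_0$, $b_0$, $C_0$ so that the no-holes condition on consecutive intervals $\gotI_n, \gotI_{n+1}$ is precisely \eqref{eq:10.4}; once this identification is made, nothing more needs to be proved and no new small-divisor estimate is required, since all the analytic content has already been absorbed into Theorem \ref{thm:2} and Lemma \ref{lem:8.6}.
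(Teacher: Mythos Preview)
Your proposal is correct and follows exactly the approach of the paper: the paper simply states that the intervals $\gotI_n$ overlap without holes if and only if \eqref{eq:10.4} holds and then records Theorem~\ref{thm:3} as an immediate consequence of Theorem~\ref{thm:2}, while you have spelled out the elementary arithmetic identifying $a_0^{1/\gotn}=b_0$ and the observation that Whitney continuity becomes ordinary continuity on a full interval. There is nothing to add.
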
 

In terms of the quantity $\e_n(\al)$ defined in \eqref{eq:2.9}, the condition \eqref{eq:10.4} reads
\begin{equation} 
\e_n(\al) = \frac{1}{q_n} \log q_{n+1} \le \frac{1}{q_n} \log b_0 + \frac{C_0}{\gotn} .
\end{equation}
In other words, for Theorem \ref{thm:3} to apply one needs $\e_{n}(\al) < C_0/\gotn$. Such a condition is automatically
satisfied not only if $\oo=(1,\al)$ is a Bryuno vector but also if $\e_k(\al)$ goes $0$
(the condition
{\color{black}
usually assumed in KAM theory to ensure that the homological equation be solvable}).


\end{document}